\documentclass{amsart}

\usepackage{amsmath}
\usepackage{amsthm,amssymb,color,comment}
\usepackage{mathtools}
\usepackage{bbm}
\usepackage{hyperref}
\usepackage[TS1,T1]{fontenc}
\usepackage{inputenc}
\usepackage{dsfont}
\usepackage{tikz}
\usepackage{enumitem}
\usepackage{soul}
\usepackage[top = 2.5cm, left=2cm, right=2cm, bottom=2.5cm]{geometry}
\usepackage[sort]{cite}

\numberwithin{equation}{section}

\newtheorem{thm}{Theorem}[section]
\newtheorem{prop}[thm]{Proposition}
\newtheorem{lem}[thm]{Lemma}
\newtheorem{cor}[thm]{Corollary}

\usepackage{color}

\newtheorem{Def}[thm]{Definition}
\newtheorem{rem}[thm]{Remark}

\theoremstyle{definition}
\newtheorem{Ass}[thm]{Assumption}

\newcommand{\eq}[1]{\begin{equation}
\begin{split}
#1
\end{split}
\end{equation}}

\newcommand{\lr}[1]{\left( #1 \right)}
\newcommand{\Grad}{\nabla}

\DeclareMathOperator{\DIV}{div}

\newcommand{\R}{\mathbb{R}}
\newcommand{\N}{\mathbb{N}}

\newcommand{\M}{\mathcal{M}}

\newcommand{\diff}{\mathop{}\!\mathrm{d}}

\newcommand{\symm}{\mathbb{R}^{d\times d}_{\text{sym}}}

\newcommand{\wstar}{\overset{\ast}{\rightharpoonup}}

\makeatletter
\newcommand{\doublewidetilde}[1]{{%
  \mathpalette\double@widetilde{#1}%
}}
\newcommand{\double@widetilde}[2]{%
  \sbox\z@{$\m@th#1\widetilde{#2}$}%
  \ht\z@=.9\ht\z@
  \widetilde{\box\z@}%
}
\makeatother

\parskip 3pt

\author{Jakub Woźnicki}
\address{Faculty of Mathematics, Informatics and Mechanics, University of Warsaw, Stefana Banacha 2, 02-097 Warsaw, Poland; Institute of Mathematics of Polish Academy of Sciences, Jana i J\k edrzeja \'Sniadeckich 8, 00-656 Warsaw, Poland}
\email{jw.woznicki@student.uw.edu.pl}
\thanks{Jakub Woźnicki was supported by National Science Center, Poland through project no. 2021/43/O/ST1/03031.}

\author{Ewelina Zatorska}
\address{Mathematics Institute, University of Warwick, Zeeman Building, Coventry CV4 7AL, United Kingdom}
\email{ewelina.zatorska@warwick.ac.uk}
\thanks{The work of E.Z. was  supported by the EPSRC Early Career Fellowship no. EP/V000586/1.}

\begin{document}

\title[Nonlocal Aw--Rascle, Euler alignment]{Existence and weak-strong uniqueness of measure solutions to Euler-alignment/Aw--Rascle--Zhang model of collective behaviour}

\begin{abstract}

We study the multi-dimensional Euler–alignment system with a matrix-valued communication kernel, motivated by models of anticipation dynamics in collective behaviour. A key feature of this system is its formal equivalence to a nonlocal variant of the Aw–Rascle–Zhang (ARZ) traffic model, in which the desired velocity is modified by a nonlocal gradient interaction. We prove the global-in-time existence of measure solutions to both formulations, obtained via a single degenerate pressureless Navier–Stokes approximation. Furthermore, we establish a weak–strong uniqueness principle adapted to the pressureless setting and to nonlocal alignment forces. As a consequence, we rigorously justify the formal correspondence between the nonlocal ARZ and Euler–alignment models: they arise from the same inviscid limit, and the weak–strong uniqueness property ensures that, whenever a classical solution exists, both formulations coincide with it.

\end{abstract}

\keywords{Euler-alignment model, Aw-Rascle-Zhang model, matrix-valued communication weight, measure solutions, weak-strong uniqueness}
\subjclass[2000]{35Q35, 35Q70, 35A02, 35B35}

\maketitle

\section{Introduction}
Among these, the Euler–alignment model, obtained as a hydrodynamic limit of the Cucker–Smale system, plays a central role in describing emergent coordination in crowds, animal flocks, and  swarms. The model consists of pressureless compressible Euler equations with a nonlocal velocity alignment force, and has been the subject of extensive analytical study in recent years; see, for example, \cite{HT08,KMT15,LLSR,LS22,PF24,ChKoWi}.

In this work, we study a multi-dimensional Euler system with a matrix-valued alignment kernel
\begin{align}\label{eq:main_sys_1}
    \left\{\begin{array}{ll}
         \partial_t\rho + \DIV_x(\rho\, u) = 0, \\
         \partial_t(\rho\, u) + \DIV_x(\rho u\otimes u) + \rho\int_{\R^d}D^2K(x-y) (u(x) - u(y))\rho(y)\diff y = 0,
    \end{array} \right. \quad \text{ in }(0, T)\times\R^d,
\end{align}
where 
\eq{\label{khess}
D^2 K = \lr{\partial^2_{x_i x_j}K }_{i,j=1}^n.}
Such Hessian-type kernels were introduced by Shu and Tadmor \cite{ShuTadmor21}, to model {\emph{anticipation dynamics}}, and lead to flocking behavior under attractive potentials.

A distinctive feature of \eqref{eq:main_sys_1} is that it also admits a hydrodynamic formulation reminiscent of the Aw–Rascle–Zhang (ARZ) traffic model. Formally introducing a velocity field $w$ via
\eq{\label{wu}
w=u+ \Grad K(x)\star\rho,}
yields the nonlocal multi-dimensional ARZ system
\begin{align}\label{eq:main_sys_rho_w}
    \left\{\begin{array}{ll}
         \partial_t\rho + \DIV_x(\rho\, u) = 0, \\
         \partial_t(\rho\, w) + \DIV_x(\rho w\otimes u) = 0,
    \end{array} \right. \quad \text{ in }(0, T)\times\R^d.
\end{align}

The one-dimensional ARZ model is one of the most established macroscopic models for vehicular traffic flow \cite{AR2000, Zhang2002, GP2006, Goatin2006}. It consists of conservation laws \eqref{eq:main_sys_rho_w} for the density $\rho$ and the momentum associated with the desired velocity $w$. The difference $w-u$ is referred to as the {\emph{pressure}} or {\emph{offset}} and is typically modeled as a nonnegative, increasing function of $\rho$, such as $p(\rho)=\rho^\gamma$. The quantity $p(\rho)$ plays an analogous role  to pressure in classical fluid dynamics—it governs the propagation of congestion waves—but in contrast to fluid models, it modifies the entire transport structure of the ARZ system, not only the forcing term.

Recently,   generalizations and non-local variants of ARZ model have been introduced, see \cite{FaHeSe, HT2025, Marconi2025, Amadori2025}, motivated both by traffic flow modeling and by analogies with collective motion systems. In these models,  the velocity takes the form $u=V(w,\rho)$, capturing local or nonlocal interactions. In particular, \cite{Amadori2025} studied the case $u = w - \xi\star \rho$ for a smooth kernel $\xi$, and observed that this nonlocal ARZ system is equivalent to the one-dimensional Euler–alignment model \eqref{eq:main_sys_1} with communication weight $\Psi=D^2K$. Another, but this time a multi-dimensional, extension of ARZ appears in \cite{ABDM24}, where  system \eqref{eq:main_sys_rho_w} is suplemented with the offset relation
$$w=u+\Grad(\rho^{-1}-\rho_{max}^{-1})^{-\gamma},$$
for some maximal packing density $\rho_{max}$. The congestion term in form of a gradient, results in a singular diffusion in the continuity equation (when written in terms of $\rho$ and $w$) to enforce capacity constraints in the crowd density while inducing a steering behaviour typical for the flow of pedestrians.
Incorporating the nonlocal term \eqref{wu} in this setting corresponds exactly to the anticipation-driven dynamics of \cite{ShuTadmor21}.

The primary goal of this paper is to establish global-in-time measure solutions to \eqref{eq:main_sys_1} and to the ARZ formulation \eqref{wu}–\eqref{eq:main_sys_rho_w}, without smallness assumptions on the initial data. We also prove a weak–strong uniqueness principle, showing that these measure solutions coincide with classical solutions whenever the latter exist.

The equivalence between \eqref{eq:main_sys_1} and \eqref{wu}–\eqref{eq:main_sys_rho_w} was recently explored in \cite{CPSZ24} at the microscopic, mesoscopic, and macroscopic levels. We also refer to \cite{CPSZ24} for a useful overview of other results on these models, including existence and uniqueness results as well as their derivation through mean-filed limit, based on the previous results of Kim \cite{Kim22} Peszek and Poyato \cite{PP23, PP24}  Fabisiak and Peszek \cite{PF24}.
 In particular, under the assumption that $K$ is $\lambda$-convex and weakly singular, it was shown that certain class of measure-valued solutions to the mesoscopic (kinetic) formulation is a mean-field limit of atomic solutions. Under stronger assumptions on the kernel ($D^2K\in W^{1,\infty}(\mathbb{R}^d)$) the convergence of empirical measures towards the classical but short-time solutions to \eqref{eq:main_sys_rho_w} was shown following the approach of Carrillo and Choi \cite{CC21}.  Concerning the connection between the macroscopic hydrodynamic formulation \eqref{eq:main_sys_rho_w} and the  kinetic system, the authors were only able to justify that a suitably regular solution to \eqref{eq:main_sys_rho_w} generates mono-kinetic solution to the kinetic system. However, global solutions are only known to exists under smallness assumptions; see \cite{CTT21, Shvydkoy, Danchin,TT14,CCKT}, and references therein.

For the local version of the model \eqref{wu}–\eqref{eq:main_sys_rho_w}, i.e. when 
$K(x)\approx \delta(x)$, the existence of measure-valued solutions and a weak–strong uniqueness principle were established by Chaudhuri, Gwiazda, and Zatorska in \cite{CGZ22}; see also \cite{Chaudhuri_Piasecki_Zatorska_2025} for local existence of strong solutions. The existence of infinitely many global-in-time weak solutions for bounded initial data, obtained via the convex integration method, was demonstrated in \cite{CFZ24}.

The same convex integration technique was already used in \cite{CFGG17} to construct weak solutions to the Euler system with general nonlocal interactions, including attraction–repulsion and alignment. In particular, they treated a pressureless nonlocal Euler system and exploited the weak repulsive Poisson interaction to prove weak–strong uniqueness. It is worth noting that all of these results were obtained in bounded or periodic domains, whereas our analysis is carried out in the whole space. We also mention the works \cite{BrMa19, CWZ20} on inviscid limits of Navier–Stokes-type approximations for nonlocal Euler systems, with and without pressure. In \cite{ChKoWi}, dissipative measure-valued solutions for the Euler–alignment model were studied and weak–strong uniqueness was proved. However, in all of these works a crucial role is played by estimates in Lebesgue or (dual) Sobolev norms for the density. Since systems \eqref{eq:main_sys_rho_w} and \eqref{eq:main_sys_1} lack a classical pressure term, such techniques are not  applicable here.

In this paper, we first establish global-in-time existence of measure solutions to \eqref{eq:main_sys_1} (and equivalently to the system \eqref{wu}-\eqref{eq:main_sys_rho_w}). Instead of approximating \eqref{eq:main_sys_1} via its microscopic or kinetic formulations—as in, for instance, \cite{mucha2018thecucker}, where measure-valued solutions to the Cucker–Smale model with singular scalar weights were obtained—we construct solutions by a vanishing viscosity limit of a suitably designed degenerate pressureless Navier–Stokes approximation. In contrast with \cite{BrMa19} and \cite{CGZ22}, our approach relies on the existence theory developed by Vasseur and Yu \cite{VaYu2016} and on its nonlocal generalization in \cite{mucha2025construction}.

The key advantage of this approximation is that density-dependent viscosity yields enhanced regularity of the density, thanks to the Bresch–Desjardins entropy estimate \cite{bresch2003existence}, while the Mellet–Vasseur estimate \cite{mellet2007onthebarotropic} gives compactness for the velocity field. Unlike the case of constant viscosity, this allows one to construct weak solutions even in the absence of pressure, as in \cite{mucha2025construction}, which is essential for our problem. Indeed, in the energy identity the dissipative term $a\DIV_x(\nu\mathbb{D}u)$ appears as $a\nu\int|\mathbb{D}u|^2\diff x$, and thus it vanishes in the weak formulation as $a\to 0^+$, but for pressure terms $a\nabla_xp(\rho)$ this is not true. Our proof crucially exploits this structure. Moreover, we use natural bounds in the space of Radon measures, obtained from the continuity equation and conservation of momentum, to pass to the limit in all terms except the convective term, which may generate a concentration defect. Adapting techniques from \cite{abbatiello2020onaclass, woznicki2022weak}, we control this defect measure via the energy inequality.

Our second main contribution is a weak–strong uniqueness principle, based on a relative entropy method adapted to the pressureless setting. This method originates from Dafermos \cite{dafermos1979thesecond, dafermos2016hyperbolic} for scalar conservation laws, and was extended to incompressible Euler equations by Brenier et al. \cite{brenier2011weak}. 
It has since been applied to stability, asymptotic limits, and dimension reduction problems \cite{bella2014dimension, christoforou2018relative, giesselemann2017stability, feireisl2012relative}. In our case, instead of relying on convex entropy associated with pressure potentials, the method employs Wasserstein-2 distance used before by Figali and Kang in \cite{figali2019arigorous} and by Carrillo and Choi \cite{CC21} for the derivation of mono-kinetic Euler-alignment model from the particle and the kinetic levels, respectively. Consequently, classical solutions from \cite{CC21, CPSZ24} are shown to be unique within the class of measure solutions.
As a corollary, our results rigorously justify the formal correspondence between the generalized nonlocal ARZ system \eqref{wu}-\eqref{eq:main_sys_rho_w} and the Euler–alignment system \eqref{eq:main_sys_1}, previously observed in \cite{Amadori2025, CPSZ24}. We establish this equivalence in two ways: first, by proving that both systems arise as inviscid limits of the same approximation; and second, by showing that whenever a classical solution exists, both formulations coincide through our weak–strong uniqueness principle.

The rest of the paper is organized as follows. In Section \ref{Sec:2} we explain our notation, introduce assumptions on the data and on the  kernel $K$, define our measure solution and formulate our main results -- Theorems  \ref{thm:main_existence} and \ref{thm:stability_initial_datum}. Then, in Section \ref{Sec:3} we recall and prove some compactness tools in measure spaces, used throughout the paper. In Section \ref{Sec:4} we prove Theorem \ref{thm:main_existence} starting from a suitable approximation which is further discussed in  the Appendix \ref{appendix:existence_for_approximation}. Finally, in Section \ref{Sec:5} we prove the weak-strong uniqueness via a relative entropy argument, concluding the proof of Theorem \ref{thm:stability_initial_datum}.

\section{Notation, assumptions and the main results}\label{Sec:2}

First, we introduce the notation used through the work. Let $d$ be the dimension of the space, and $T > 0$ denote the length of time interest. We will write $x$ for an element of $\R^d$ and $t$ for an element of $(0,T)$. Next, for any vectors $a,b\in \mathbb{R}^d$ we write $a\cdot b$ for the standard scalar product of $a$ and $b$. Similarly, the space $\symm$ denotes the space of symmetric $d\times d$ matrices and for any $A,B \in \symm$ we denote the scalar product by $A:B$, that is $A : B := \mathrm{tr}(A^T B)$. Moreover, the symbol $\otimes$ is reserved for the tensor product, that is, whenever $a,b\in \mathbb{R}^d$ we denote by $a\otimes b \in \symm$ as $(a\otimes b)_{ij}:=a_ib_j$ for $i,j=1,\ldots, d$. We use the standard notation for, continuous (where $C_c$ denotes compactly supported functions and $C_b$ continuous and bounded ones), Sobolev and Lebesgue function spaces, as well as the space of Radon measures (where $\mathcal{M}^+$ denotes the space of positive measures, and $\mathcal{P}$ the space of probability measures) and frequently do not distinguish between scalar-, vector- or matrix-valued functions.
In addition, to shorten the notation, we sometimes use the following simplifications. When $f \in L^p(\R^d)$, we simplify it to $f \in L^p_x$, and if $f \in L^p(0,T; L^q(\R^d))$, then we write $f \in L^p_t L^q_x$.

Symbol $\mathbb{D}u$ denotes the symmetric part of the spatial gradient $\nabla_x$ of a function $u$, i.e. $\mathbb{D}u = \left(\nabla_x u + (\nabla_x u)^T\right)/2$. Throughout the paper we also employ the universal constant $C$ that may vary from line to line, but depends only on data.

Now, let us introduce the assumptions on the initial datum.

\begin{Ass}[Assumptions on initial data]\label{ass:initial_data}
We assume that $u_0\in L^2(\R^d;\diff\rho_0)$, $\rho_0\in \mathcal{P}(\R^d)$, and
        $$
            \int_{\R^d}|x|^2\diff\rho_0 < +\infty.
        $$
\end{Ass}

\begin{Ass}[Assumptions on the kernel]\label{ass:K}
Depending on the result, a subset of the following assumptions on the kernel will be made:
\begin{enumerate}
    \item[(i)] $D^2K \in C_b(\R^d)$,
    \item[(ii)] $D^2K$ is even and positive semi-definite,
    \item[(iii)] $D^2K$ is Lipschitz.
\end{enumerate}
\end{Ass}

In order to define the notion of measure solutions we need to first to recall definition of a flat metric on the space of Radon measures (discussion on its properties is postponed to Section \ref{Sec:3}).
\begin{Def}\label{def:flat_metric}
    Let $\mu, \nu\in \mathcal{M}(\R^d)$. Then, we may define a flat metric
    \begin{align*}
        d_f(\mu, \nu) = \sup\left\{\int_{\R^d}\varphi\diff(\mu - \nu)\,\Big|\,\varphi\in C^1(\R^d),\,\|\varphi\|_\infty\leq 1,\,\|\nabla_x\varphi\|_\infty\leq 1\right\}.
    \end{align*}
\end{Def}

The measure solutions to problem \eqref{eq:main_sys_1} are defined as follows.

\begin{Def}\label{def:measure_solution}
    We say that the triple $(\rho, u, \mu)$ is a measure solution to the system \eqref{eq:main_sys_1} with $K$ satisfying Assumption \ref{ass:K} (i) iff  the following hold:
      \begin{enumerate}
      \item $(\rho,u,\mu)$ belongs to the regularity class:
        \begin{align*}
        \rho\in C([0, T]; (\mathcal{P}(\R^d), d_f)),\quad u\in L^\infty(0, T;& L^2(\diff\rho_t)),\quad \mu\in L^\infty(0, T; \M^+(\R^d; \R^{d\times d}_{\mathrm{sym}})).\\
        \sup_{t\in (0, T)}\int_{\R^d}|x|^2\diff\rho_t(x) < +\infty,&\qquad\sup_{t\in (0, T)}\int_{\R^d}|x||u(t,x)|\diff\rho_t(x) < +\infty.
        \end{align*}

        \item The continuity equation
        \begin{align}\label{eq:final_mass_conservation}
        \int_{\R^d}\phi(t, x)\diff\rho_t(x) - \int_{\R^d}\phi(0, x)\diff\rho_0(x) = \int_0^t\int_{\R^d}\partial_t\phi(\tau,x) + \nabla_x\phi(\tau, x)\cdot u(\tau, x)\diff\rho_\tau(x)\diff\tau,
        \end{align}
       holds for every $t\in [0, T]$ and $\phi\in C^1([0, T]\times \R^d)$, such that $|\phi(t, x)|, |\partial_t\phi(t, x)|\leq C(1 + |x|^2)$, $|\nabla_x\phi(t, x)|\leq C(1 + |x|)$.
        \item The momentum equation
        \begin{equation}\label{eq:final_momentum_conservation}
            \begin{split}
                &\int_{\R^d}\phi(t, x)\cdot u(t, x)\diff\rho_t(x) - \int_{\R^d}\phi(0, x)\cdot u_0(x)\diff\rho_0(x)\\
                &= \int_0^t\int_{\R^d}\partial_t\phi(\tau, x)\cdot u(\tau, x) + u(\tau, x)\otimes u(\tau, x) : \nabla_x\phi(\tau, x)\diff \rho_\tau(x)\diff \tau\\
                &\quad+ \int_0^t\int_{\R^d}\nabla_x\phi(\tau, x) : \diff\mu_\tau(x)\diff \tau + \int_0^t\int_{\R^{2d}}\phi(\tau, x)D^2K(x - y) u(\tau, y)\diff\rho_\tau(y)\diff\rho_\tau(x)\diff \tau\\
                &\quad- \int_0^t\int_{\R^{2d}}\phi(\tau, x) D^2K(x-y)u(\tau, x)\diff\rho_\tau(y)\diff\rho_\tau(x)\diff\tau,
            \end{split}
        \end{equation}
        holds for a.e. $t\in (0, T)$ and $\phi\in C^1([0, T)\times \R^d)$, such that $|\phi(t, x)|, |\partial_t\phi(t, x)|\leq C(1 + |x|)$, $|\nabla_x\phi(t, x)|\leq C$.
        \item The energy inequality
        \begin{equation}\label{ineq:final_energy_inequality}
            \begin{split}
                \frac{1}{2}\int_{\R^d}|u(t, x)|^2\diff\rho_t(x) + \frac{1}{2}\mathrm{tr}(\mu_t(\R^d)) \leq e^{4t\|D^2K\|_\infty}\frac{1}{2}\int_{\R^d}|u_0|^2\diff\rho_0,
            \end{split}
        \end{equation}
        holds for a.e. $t\in (0, T)$. 
        
        If in addition $K$ satisfies Assumption \ref{ass:K} (ii), then the energy inequality is uniform with respect to time, i.e.  
        \begin{equation}\label{ineq:energy_inequality_for_symmetric}
            \begin{split}
                &\frac{1}{2}\int_{\R^d}|u(t, x)|^2\diff\rho_t(x) + \frac{1}{2}\mathrm{tr}(\mu_t(\R^d))\\
                &\quad+ \frac{1}{2}\int_0^t\int_{\R^{d\times d}}(u(s, x) - u(s, y))D^2K(x - y)(u(s, x) - u(s, y))\diff \rho_s(x)\diff \rho_s(y)\diff s \leq \frac{1}{2}\int_{\R^d}|u_0|^2\diff\rho_0,
            \end{split}
        \end{equation}
        for a.e. $t\in(0, T)$.
    \end{enumerate}
  
\end{Def}

\begin{Def}\label{def:measure_solution_rho_w}
    We say that the quadruple $(\rho, w, u, \nu)$ is a measure solution to system \eqref{wu}, \eqref{eq:main_sys_rho_w} iff $\rho,u$ belong to the regularity class specified in Definition \ref{def:measure_solution} and satisfy the continuity equation in the same  of \eqref{eq:final_mass_conservation}. Moreover, for $w$ defined via
\begin{align}\label{eq:final_compatibility_rho_w}
        w(t,x)\diff\rho_t := u(t, x)\diff\rho_t + (\nabla_xK\star\diff\rho_t)\diff\rho_t\qquad \text{ a.e. }t\in(0, T).
    \end{align}
    Then, there exists a matrix-valued measure $\nu\in L^\infty(0, T; \M(\R^d; \R^{d\times d}))$, such that the momentum equation \eqref{eq:main_sys_rho_w}$_2$
        \begin{equation}\label{eq:final_momentum_conservation_rho_w}
            \begin{split}
                &\int_{\R^d}\phi(t, x)\cdot w(t, x)\diff\rho_t(x) - \int_{\R^d}\phi(0, x)\cdot w_0(x)\diff\rho_0(x)\\
                &= \int_0^t\int_{\R^d}\partial_t\phi(\tau, x)\cdot w(\tau, x) + w(\tau, x)\otimes u(\tau, x) : \nabla_x\phi(\tau, x)\diff \rho_\tau(x)\diff \tau+ \int_0^t\int_{\R^d}\nabla_x\phi(\tau, x) : \diff\nu_\tau(x)\diff \tau
            \end{split}
        \end{equation}
        holds for a.e. $t\in (0, T)$ and $\phi\in C^1([0, T)\times \R^d)$, for which $|\phi(t, x)|, |\partial_t\phi(t, x)|\leq C(1 + |x|)$, $|\nabla_x\phi(t, x)|\leq C$.
    Furthermore,
 \begin{align*}
        \sup_{t\in (0, T)}\int_{\R^d}|x||w(t,x)|\diff\rho_t(x) < +\infty,
 \end{align*}   
and for a.e. $t\in (0, T)$
        \begin{equation}\label{ineq:final_energy_inequality_rho_w}
            \begin{split}
                \frac{1}{2}\int_{\R^d}|w(t, x)|^2\diff\rho_t(x) + \frac{1}{2}\int_{\R^d}|u(t, x)|^2\diff\rho_t(x) + \mathrm{tr}(\nu_t(\R^d))\\
                \leq C(\|D^2K\|_\infty, \||u_0|^2\|_{L^1(\diff\rho_0)}, \||x|^2\|_{L^1(\diff\rho_0)}).
            \end{split}
        \end{equation} 

\end{Def}

Finally, we state our  first main result.

\begin{thm}\label{thm:main_existence}
    Let the initial data $(\rho_0, u_0)$ satisfy Assumption \ref{ass:initial_data}, and let $K$ satisfy Assumption \ref{ass:K} (i). 
    \begin{enumerate}
    \item Then, there exists a measure solution $(\rho, u, \mu)$ to \eqref{eq:main_sys_1} in the sense of Definition \ref{def:measure_solution}. Moreover, this solution is obtained as the vanishing-viscosity limit of weak solutions to the compressible Navier--Stokes system with density-dependent viscosity
    
  \item For $w$  defined via \eqref{eq:final_compatibility_rho_w}, 
 the quadruple $(\rho, w, u, \nu)$ is a measure solution to system \eqref{wu}-\eqref{eq:main_sys_rho_w} in the sense of Definition \ref{def:measure_solution_rho_w}.
        \end{enumerate}
\end{thm}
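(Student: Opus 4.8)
The strategy is to realize $(\rho,u,\mu)$ as a vanishing–viscosity limit of weak solutions to a degenerate pressureless Navier--Stokes approximation carrying the nonlocal alignment force, and then to derive the ARZ formulation of part (2) by an algebraic manipulation of the limiting equations. For a small parameter $a>0$ I would work with the system
\begin{equation*}
\begin{split}
&\p_t\rho_a+\DIV_x(\rho_a u_a)=0,\\
&\p_t(\rho_a u_a)+\DIV_x(\rho_a u_a\otimes u_a)+a\Grad_x\rho_a^\gamma-a\DIV_x(\nu(\rho_a)\mathbb D u_a)+\rho_a\!\int_{\R^d}\!D^2K(x-y)(u_a(x)-u_a(y))\rho_a(y)\diff y=0,
\end{split}
\end{equation*}
with density–dependent viscosity $\nu=\nu(\rho)$ and the further lower–order regularizations of \cite{VaYu2016,mucha2025construction} needed to run the construction (discussed in Appendix \ref{appendix:existence_for_approximation}). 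Since $D^2K$ is bounded by Assumption \ref{ass:K}(i), the nonlocal term is a harmless perturbation, and the existence theory of Vasseur--Yu \cite{VaYu2016} together with its nonlocal extension \cite{mucha2025construction} provides a weak solution satisfying the Bresch--Desjardins entropy estimate \cite{bresch2003existence} and the Mellet--Vasseur estimate \cite{mellet2007onthebarotropic}.

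The heart of the argument is that the relevant estimates are uniform in $a$. Mass conservation keeps $\rho_{a,t}\in\mathcal P(\R^d)$. Testing the momentum equation with $u_a$ puts the viscous dissipation $a\int\nu(\rho_a)|\mathbb D u_a|^2\diff x\ge 0$ and the artificial pressure term on the favorable side, while the alignment contribution is bounded by $2\|D^2K\|_\infty\int|u_a|^2\diff\rho_{a,t}$ using $\rho_{a,t}\in\mathcal P$; a Gr\"onwall argument yields $\tfrac12\int|u_a|^2\diff\rho_{a,t}+a\int_0^t\!\!\int\nu(\rho_a)|\mathbb D u_a|^2+a\,C\!\int\rho_{a,t}^\gamma\le e^{4t\|D^2K\|_\infty}\tfrac12\int|u_0|^2\diff\rho_0$. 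Under Assumption \ref{ass:K}(ii) the alignment term is instead the nonnegative quadratic form appearing in \eqref{ineq:energy_inequality_for_symmetric}, so no Gr\"onwall is needed and the bound is time–uniform. Controlling $\tfrac{\diff}{\diff t}\int|x|^2\diff\rho_{a,t}=2\int x\cdot u_a\diff\rho_{a,t}\le\int|x|^2\diff\rho_{a,t}+\int|u_a|^2\diff\rho_{a,t}$ by Gr\"onwall bounds the second moment, and then $\int|x||u_a|\diff\rho_{a,t}$ by Cauchy--Schwarz. Consequently $\rho_a u_a$, $\rho_a u_a\otimes u_a$, the two alignment terms and $a\rho_a^\gamma$ are bounded in $L^\infty(0,T;\M(\R^d))$ uniformly in $a$, and $\sqrt a\,\sqrt{\nu(\rho_a)}\,\mathbb D u_a$ is bounded in $L^2((0,T)\times\R^d)$.

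For the limit $a\to 0^+$: from the continuity equation and the momentum bound $\{\rho_a\}$ is equicontinuous in time in the flat metric of Definition \ref{def:flat_metric}, and the uniform second moment gives tightness, so $\rho_a\to\rho$ in $C([0,T];(\mathcal P(\R^d),d_f))$ along a subsequence. The Mellet--Vasseur estimate upgrades this to a.e. convergence of $u_a$ to some $\bar u$ with $\rho_a u_a\wstar\rho\bar u$, so \eqref{eq:final_mass_conservation} passes to the limit and $\bar u\in L^\infty(0,T;L^2(\diff\rho_t))$. In the momentum equation, $a\rho_a^\gamma\to 0$ in $L^\infty(0,T;L^1)$, and the viscous term vanishes because $a|\int\nu(\rho_a)\mathbb D u_a:\Grad_x\phi|\le\sqrt a\,(a\int\nu(\rho_a)|\mathbb D u_a|^2)^{1/2}(\int\nu(\rho_a)|\Grad_x\phi|^2)^{1/2}\to 0$ — exactly the structural point stressed after \eqref{eq:main_sys_rho_w} that would be unavailable for a genuine pressure term. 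The nonlocal alignment terms pass to the limit via narrow convergence of $\rho_a\otimes\rho_a$, continuity and boundedness of $D^2K$, and a.e. convergence of $u_a$, the first–moment bound handling the tails. The only term that may concentrate is $\rho_a u_a\otimes u_a$; adapting the defect–measure analysis of \cite{abbatiello2020onaclass,woznicki2022weak} it converges weakly-$*$ to $\rho\,\bar u\otimes\bar u+\mu$ with $\mu\in L^\infty(0,T;\M^+(\R^d;\symm))$, the object in Definition \ref{def:measure_solution}. This gives \eqref{eq:final_momentum_conservation}, and weak lower semicontinuity of the kinetic energy, together with $\int|\bar u|^2\diff\rho_t+\mathrm{tr}\,\mu_t(\R^d)$ being the trace of the weak-$*$ limit of $\rho_a u_a\otimes u_a$, gives \eqref{ineq:final_energy_inequality} (resp. \eqref{ineq:energy_inequality_for_symmetric} under Assumption \ref{ass:K}(ii)); relabelling $\bar u$ as $u$ proves part (1).

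For part (2): since $D^2K\in C_b(\R^d)$, $\Grad K$ is Lipschitz of at most linear growth, so $\Grad K\star\rho_t$ is of linear growth and \eqref{eq:final_compatibility_rho_w} defines $w\in L^\infty(0,T;L^2(\diff\rho_t))$ with $\sup_t\int|x||w|\diff\rho_t<\infty$ by the moment bounds. To obtain \eqref{eq:final_momentum_conservation_rho_w} I would test the continuity equation, in its product form on $\rho_t\otimes\rho_t$, with $(\tau,x,y)\mapsto\phi(\tau,x)\cdot\Grad K(x-y)$; using $\Grad_y\Grad K(x-y)=-D^2K(x-y)$ and the symmetry of $D^2K$, the material derivative of $x\mapsto\Grad K\star\rho_\tau$ along the flow equals $\int_{\R^d}D^2K(x-y)(u(x)-u(y))\diff\rho_\tau(y)$, which cancels the two alignment terms in \eqref{eq:final_momentum_conservation} exactly; the convective contributions recombine into $w\otimes u:\Grad_x\phi$, and since all added terms are absolutely continuous with respect to $\rho_\tau\otimes\rho_\tau$ or $\rho_\tau$ no new concentration appears, so one takes $\nu_\tau=\mu_\tau$. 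Finally \eqref{ineq:final_energy_inequality_rho_w} follows from $|w|^2\le 2|u|^2+2|\Grad K\star\rho_t|^2\le 2|u|^2+C(1+\int|x|^2\diff\rho_t)$ together with \eqref{ineq:final_energy_inequality} and the second–moment bound. I expect the main obstacle to be the part of the third step where one must identify the concentration defect $\mu$ as a \emph{positive semi-definite} matrix–valued measure and show that it enters the energy inequality with the correct sign and trace — requiring the lower–semicontinuity/Young-measure machinery of \cite{abbatiello2020onaclass,woznicki2022weak} coupled with the Mellet--Vasseur a.e. convergence of $u_a$ — while simultaneously passing to the limit in the nonlocal force on the unbounded domain with only uniform bounds in spaces of Radon measures.
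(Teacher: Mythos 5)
Your overall strategy matches the paper's: a vanishing-viscosity limit of a degenerate pressureless Navier--Stokes approximation (built on \cite{VaYu2016,mucha2025construction}), compactness in the flat metric, a concentration defect $\mu$ for the convective term controlled through the energy, and lower semicontinuity for the energy inequality. However, there are concrete gaps. The most serious one is the artificial pressure $a\Grad_x\rho_a^\gamma$ you add to the approximation together with the claim that $a\rho_a^\gamma\to 0$ in $L^\infty(0,T;L^1)$. The energy identity only yields $\sup_t a\int_{\R^d}\rho_a^\gamma\,\diff x\le C$, i.e.\ \emph{boundedness} of $a\rho_a^\gamma$ in $L^\infty_tL^1_x$, not smallness; to make it vanish you would need a bound on $\int\rho_a^\gamma$ that is uniform in $a$, which is precisely what is unavailable in the pressureless setting. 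This is exactly the obstruction the introduction points out (``for pressure terms $a\nabla_x p(\rho)$ this is not true''), and it is why the paper's approximation in Theorem \ref{thm:existence_for_approximation} carries \emph{no} pressure at all: only the viscous term $\tfrac1N\DIV_x(\rho^N\mathbb{D}u^N)$, whose dissipation scales like $1/N$ and hence genuinely disappears. Relatedly, your appeal to the Mellet--Vasseur estimate to get a.e.\ convergence of $u_a$ in the inviscid limit is misplaced: in the paper that estimate is used only \emph{inside} the construction of the approximate solutions at fixed viscosity; the inviscid limit is handled entirely in measure spaces, with $m_t\ll\rho_t$ obtained from the quantitative bound $|m^N_t|(A)\le C\sqrt{\rho^N_t(A)}$ (Lemma \ref{lem:differentiation_of_limit_measure}) and $u$ defined by Radon--Nikodym. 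You should also note that the approximation theorem requires $\int\rho_0 F(|u_0|)\,\diff x<\infty$, so general $u_0\in L^2(\diff\rho_0)$ needs a second limiting layer (the paper's Part 3), which produces an additional defect $\mu_2$ that must be added to $\mu_1$.

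For part (2) you take a genuinely different route: you derive the $w$-equation directly from the \emph{limit} equations by testing a tensorized continuity equation with $\phi(\tau,x)\cdot\Grad K(x-y)$. The cancellation of the alignment terms is the right algebra, but as stated this is not justified by Definition \ref{def:measure_solution}: the continuity equation there is for $\rho_\tau$, not for $\rho_\tau\otimes\rho_\tau$, and the effective test function $\phi(\tau,x)\cdot(\Grad K\star\rho_\tau)(x)$ is only Lipschitz in $\tau$ (since $\rho_\tau\in C([0,T];d_f)$ and $u\in L^\infty_tL^2(\diff\rho_\tau)$), not $C^1$ as the definition requires; closing this requires a tensorization plus a mollification-in-time argument. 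The paper avoids both issues by performing the change of variables $w^N=u^N+\Grad K\star\rho^N$ at the smooth approximate level (Theorem \ref{thm:existence_for_approximation_rho_w}, using \eqref{dtGradK}) and then passing to the limit, defining $\nu$ as the full defect of $\rho^Nw^N\otimes u^N$; it does not claim $\nu=\mu$ and explicitly leaves the sign of $\nu$ undetermined, whereas your argument asserts $\nu=\mu$ — plausible, but it requires showing that the cross term $\rho^N(\Grad K\star\rho^N)\otimes u^N$ concentrates nowhere, which you have not argued.
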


Our second result concerns the weak-strong uniqueness property of the solutions given by Theorem \ref{thm:main_existence}. As always, such a result shows that the notion of solution given by Definition \ref{def:measure_solution} makes sense, as it reduces to a strong solution whenever it exists. We note here that in fact, there is an already existing theory of strong solutions to the system \eqref{eq:main_sys_1}. One can find relevant information in \cite[Theorem 4.1]{CC21}, and \cite[Theorem 3.1]{ha2014ahydrodynamic}, where authors prove the existence of strong solutions, whenever the initial data and communication kernel are smooth enough, and either there is some smallness of norms of initial datum or the existence is only local in time. In principle, one should require the kernel to be even and satisfy $D^2K\in W^{1, p}(\R^d)$ for any $p\in [1, +\infty]$. 

\begin{thm}\label{thm:stability_initial_datum} Suppose, that the kernel K satisfies the Assumptions \ref{ass:K} (i)--(iii). Assume, that there exists a strong solution $(r, v)\in C([0, T]; \mathcal{P}(\R^d)) \times L^\infty(0, T; W^{1,\infty}(\R^d))$, $r > 0$ on $[0, T]\times\R^d$ to the system
    \begin{align}\label{eq:main_sys_strong}
    \left\{\begin{array}{ll}
         \partial_t r + \DIV_x(r\, v) = 0, \\
         \partial_tv + (v\cdot\nabla_x)v + \int_{\R^d}D^2K(x-y) (v(x) - v(y))\diff r(y) = 0,
    \end{array} \right. \quad \text{ in }(0, T)\times\R^d,
    \end{align}
    with the initial datum $(r_0,v_0)$.
    Let $(\rho^n_0, u^n_0)$ satisfy Assumption \ref{ass:initial_data} as well as 
    \begin{align*}
        \|\rho^n_0 - r_0\|_{TV}&\rightarrow 0,\\
        \||x|^2(\rho_0^n - r_0)\|_{TV}&\rightarrow 0,\\
        \int_{\R^d}|u^n_0 - v_0|^2\diff\rho_0^n&\rightarrow 0.
    \end{align*}
  Then, the sequence of measure solutions $(\rho^n, u^n, \mu^n)$ with initial data $(\rho^n_0, u^n_0)$, obtained in Theorem \ref{thm:main_existence}, satisfies the inequality
   \begin{equation*}
        \begin{split}
    &\int_{\R^d}|v(t, x) - u^n(t, x)|^2\diff\rho^n_t(x) + \mathrm{tr}(\mu^n_t(\R^d)) + W_2^2(\rho^n_t, r_t)\\
    &\qquad\leq e^{C(T, \|D^2K\|_{W^{1,\infty}},\|v\|_{W^{1,\infty}})}\left(\int_{\R^d}|u_0^n - v_0|^2\diff\rho_0^n(x)+ \|\rho^n_0 - \rho_0\|_{TV} + \||x|^2(\rho_0^n - \rho_0)\|_{TV}\right).
        \end{split}
    \end{equation*}
    In particular, the measure solutions $(\rho^n, u^n, \mu^n)$ converge to the strong solutions $(r, v)$ in the sense that
    \begin{align*}
        &\sup_{t\in (0, T)}W_2(\rho^n_t, r_t) \rightarrow 0,\\
        &\sup_{t\in (0, T)}\int_{\R^d}|u^n(t, x) - v(t, x)|^2\diff\rho^n_t(x) \rightarrow 0,\\
        &\sup_{t\in (0, T)}\|\mu^n\|_{TV}\rightarrow 0.
    \end{align*}
\end{thm}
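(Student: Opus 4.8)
The plan is to run a relative-entropy / modulated-energy argument comparing the measure solution $(\rho^n,u^n,\mu^n)$ with the strong solution $(r,v)$. The natural functional is
\begin{align*}
\mathcal{E}^n(t) := \frac12\int_{\R^d}|v(t,x)-u^n(t,x)|^2\diff\rho^n_t(x) + \frac12\,\mathrm{tr}(\mu^n_t(\R^d)) + \frac12 W_2^2(\rho^n_t,r_t).
\end{align*}
The first two terms are controlled directly by the energy inequality \eqref{ineq:energy_inequality_for_symmetric} for $u^n$ together with the energy balance for $v$ (which is an identity, since $(r,v)$ is strong); the cross term $-\int v\cdot u^n\diff\rho^n_t$ is handled by using $v$ (more precisely $\phi = v$, suitably truncated and then the truncation removed using the second moment bounds and the bound on $\int|x||u^n|\diff\rho^n_t$) as a test function in the momentum equation \eqref{eq:final_momentum_conservation}, and $\frac12|v|^2$ as a test function in the continuity equation \eqref{eq:final_mass_conservation}. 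For the Wasserstein term, I would use the fact that $r>0$ is smooth with Lipschitz velocity, so $r_t$ is transported by the flow of $v$, and differentiate $W_2^2(\rho^n_t,r_t)$ in time via the Benamou--Brenier / Otto-calculus formula: along an optimal plan $\pi_t$ between $\rho^n_t$ and $r_t$,
\begin{align*}
\frac{\diff}{\diff t}\tfrac12 W_2^2(\rho^n_t,r_t) = \int_{\R^{2d}}(x-y)\cdot\bigl(u^n(t,x)-v(t,y)\bigr)\diff\pi_t(x,y),
\end{align*}
which is rigorous here because $\rho^n$ solves a continuity equation with $L^2(\diff\rho^n_t)$ velocity and $r$ solves one with a Lipschitz field (this is the standard computation, e.g. from Figalli--Kang and Carrillo--Choi cited in the text).

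Next I would collect all the terms produced by differentiating $\mathcal{E}^n$. The convective defect $\mu^n$: the good $\frac12\mathrm{tr}(\mu^n_t(\R^d))$ is in $\mathcal{E}^n$, while differentiating $-\int v\cdot u^n\diff\rho^n$ produces $\int \nabla_x v:\diff\mu^n_t$, bounded by $\|\nabla_x v\|_\infty\,\mathrm{tr}(\mu^n_t(\R^d))$ (using that $\mu^n_t$ is positive semidefinite symmetric, so its total variation equals its trace), hence absorbable into $\mathcal{E}^n$ up to the constant $\|v\|_{W^{1,\infty}}$. The quadratic inertial terms combine, using the strong equation for $v$, into the standard form $\int \nabla_x v : (v-u^n)\otimes(v-u^n)\diff\rho^n_t$, bounded by $\|\nabla_x v\|_\infty$ times the first term of $\mathcal{E}^n$. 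The alignment terms: the double integrals against $D^2K$ split, via the even symmetry of $D^2K$ (Assumption \ref{ass:K}(ii)), into a dissipative part — which has a favorable sign, matching the dissipation term already present in \eqref{ineq:energy_inequality_for_symmetric} — plus error terms of the schematic form $\int\int (v(x)-v(y) - \nabla v\cdot(x-y)\,\text{-type differences})\cdot(\ldots)$; using that $D^2K$ is Lipschitz (Assumption \ref{ass:K}(iii)) and $v$ is Lipschitz, each such error is bounded by $C(\|D^2K\|_{W^{1,\infty}},\|v\|_{W^{1,\infty}})$ times $\mathcal{E}^n(t)$ plus possibly $W_2^2(\rho^n_t,r_t)$, exploiting that along the optimal plan $|x-y|$ is exactly what $W_2$ measures. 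Assembling, one gets a differential inequality $\frac{\diff}{\diff t}\mathcal{E}^n(t) \le C\,\mathcal{E}^n(t)$ with $C = C(\|D^2K\|_{W^{1,\infty}},\|v\|_{W^{1,\infty}})$, and Grönwall yields
\begin{align*}
\mathcal{E}^n(t) \le e^{Ct}\,\mathcal{E}^n(0).
\end{align*}

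Finally I would estimate the initial error $\mathcal{E}^n(0)$. The velocity part is $\frac12\int|u^n_0-v_0|^2\diff\rho^n_0 \to 0$ by hypothesis. For $W_2^2(\rho^n_0,r_0)$ I would invoke the standard comparison between Wasserstein and a weighted total-variation / bounded-Lipschitz distance: $W_2^2(\rho^n_0,r_0) \le C\bigl(\|\rho^n_0-r_0\|_{TV} + \||x|^2(\rho^n_0-r_0)\|_{TV}\bigr)$, which holds because both measures are probability measures with finite (and, along the sequence, uniformly bounded thanks to the convergence of second moments) second moments — this is exactly the estimate the flat/bounded-Lipschitz metric tools of Section \ref{Sec:3} are designed to give, possibly via an interpolation $W_2^2 \le d_f \cdot (\text{second moments})$ type bound. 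Combining gives the displayed quantitative inequality, and taking $\sup_{t\in(0,T)}$ together with $\mathcal{E}^n(0)\to 0$ yields the three stated convergences (for $\mu^n$ using again $\|\mu^n_t\|_{TV}=\mathrm{tr}(\mu^n_t(\R^d))$).

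The main obstacle I anticipate is the rigorous time-differentiation of $W_2^2(\rho^n_t,r_t)$ and its coupling with the momentum equation: the measure solution has velocity only in $L^\infty_t L^2(\diff\rho^n_t)$, with no a priori regularity of $\rho^n_t$, so justifying the chain rule for the Wasserstein distance and the admissibility of $v$ (an $L^\infty_tW^{1,\infty}_x$ field, unbounded-support in $x$) as a test function requires care — a truncation argument in $x$, controlled by the uniform bounds $\sup_t\int|x|^2\diff\rho^n_t<\infty$ and $\sup_t\int|x||u^n|\diff\rho^n_t<\infty$ guaranteed by Definition \ref{def:measure_solution}(1), followed by passing to the limit in the truncation. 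The second delicate point is keeping track of the constants in the alignment error terms so that only $\|D^2K\|_{W^{1,\infty}}$ and $\|v\|_{W^{1,\infty}}$ enter, which is why Assumptions \ref{ass:K}(i)--(iii) are all needed.
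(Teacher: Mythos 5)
Your proposal follows the same relative-entropy strategy as the paper: test the continuity equation with $|v|^2$ and the momentum equation with $v$ (after a truncation argument justified by the moment bounds), add the energy inequality \eqref{ineq:energy_inequality_for_symmetric}, rewrite $\partial_t v$ via the strong equation \eqref{eq:main_sys_strong}$_2$, absorb the convective and concentration terms using $\|\nabla_x v\|_\infty$ and Proposition \ref{prop:bound_symmetric_measure_trace}, let the evenness of $D^2K$ cancel the quadratic alignment term against the dissipation, and close with Gr\"{o}nwall. The one genuine divergence is your treatment of the Wasserstein distance: you put $\tfrac12 W_2^2(\rho^n_t,r_t)$ inside the modulated energy and differentiate it in time via the Otto-calculus formula along an optimal plan, whereas the paper never differentiates $W_2^2$ at all --- it invokes the Figalli--Kang inequality (Proposition \ref{prop:Kang_Figali_ineq}), which bounds $W_2^2(\rho^n_t,r_t)$ directly by the velocity relative entropy plus the initial total-variation errors, and simply adds that bound to the relative-entropy inequality before applying Gr\"{o}nwall. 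Your route is essentially how Figalli--Kang prove their lemma, so it is viable, but it shifts the burden onto justifying that $t\mapsto\rho^n_t$ is an absolutely continuous curve in $W_2$ with tangent velocity $u^n\in L^\infty_tL^2(\diff\rho^n_t)$ and that the derivative formula holds a.e.; you correctly flag this as the delicate point, but it is precisely the work the paper outsources to the cited lemma. Two smaller remarks: first, the step where $W_2^2$ actually enters the Gr\"{o}nwall right-hand side in the paper is the comparison of $\int(\cdot)\diff r_s(y)$ with $\int(\cdot)\diff\rho^n_s(y)$ in the alignment term, carried out by testing the signed measure $\rho^n_s-r_s$ against the bounded Lipschitz function $y\mapsto D^2K(x-y)(v(s,x)-v(s,y))$ and using $d_f\leq W_1\leq W_2$ --- your description of the alignment errors is in the right spirit but does not pin down this mechanism, and it is the only place where Assumption \ref{ass:K}(iii) is used. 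Second, Proposition \ref{prop:bound_symmetric_measure_trace} gives $\|\mu^n_t\|_{TV}\leq C\,\mathrm{tr}(\mu^n_t(\R^d))$, not equality as you write; this is harmless for the argument.
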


\begin{cor}\label{cor:weak-strong-uniq}
Suppose that for given initial conditions $(\rho_0, u_0)$ satisfying Assumption \ref{ass:initial_data}, there exists a strong solution $(r, v)\in C([0, T]; \mathcal{P}(\R^d)) \times L^\infty(0, T; W^{1,\infty}(\R^d))$, $r > 0$ on $[0, T]\times\R^d$ satisfying
    \begin{align}\label{eq:main_sys_strong_weak_strong_argument}
    \left\{\begin{array}{ll}
         \partial_t r + \DIV_x(r\, v) = 0, \\
         \partial_tv + (v\cdot\nabla_x)v + \int_{\R^d}D^2K(x-y) (v(x) - v(y))\diff r(y) = 0,
    \end{array} \right. \quad \text{ in }(0, T)\times\R^d.
    \end{align}
    Suppose moreover that the kenrel K satisfies Assumption \ref{ass:K} (i)-(iii). Then, the measure solution $(\rho, u, \mu)$ arising from the given initial conditions coincides with the strong solutions, in the sense that
    $$
    \diff\rho_t(x) = \diff r_t(x) \text{ for all }t\in [0, T],\quad u(t, x) = v(t, x)\text{ a.e. in }[0, T]\times\R^d, \quad \mu\equiv 0.
    $$
\end{cor}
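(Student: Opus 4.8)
The plan is to obtain Corollary \ref{cor:weak-strong-uniq} as an immediate specialization of Theorem \ref{thm:stability_initial_datum}, applied to the constant sequence of initial data. First I would set $(\rho^n_0, u^n_0) := (\rho_0, u_0)$ for every $n$. Since the hypothesis is that the strong solution $(r,v)$ emanates precisely from $(\rho_0,u_0)$, we have $r_0 = \rho_0$ as measures and $v_0 = u_0$ $\rho_0$-a.e., so the three smallness requirements of Theorem \ref{thm:stability_initial_datum}, namely $\|\rho^n_0 - r_0\|_{TV}$, $\||x|^2(\rho^n_0 - r_0)\|_{TV}$ and $\int_{\R^d}|u^n_0 - v_0|^2\diff\rho^n_0$, are all identically zero and therefore trivially tend to $0$. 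The kernel hypotheses (Assumption \ref{ass:K} (i)--(iii)) are common to both statements, so Theorem \ref{thm:stability_initial_datum} applies directly.

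Then I would read off its conclusion. Denote by $(\rho, u, \mu)$ the measure solution furnished by Theorem \ref{thm:main_existence} for the data $(\rho_0,u_0)$; it coincides with every term of the constant sequence $(\rho^n,u^n,\mu^n)$. The stability inequality then yields, for a.e. $t\in(0,T)$,
$$
\int_{\R^d}|v(t,x) - u(t,x)|^2\diff\rho_t(x) + \mathrm{tr}\big(\mu_t(\R^d)\big) + W_2^2(\rho_t,r_t) \;\le\; e^{C(T,\|D^2K\|_{W^{1,\infty}},\|v\|_{W^{1,\infty}})}\cdot 0 = 0 .
$$
Since $v\in L^\infty(0,T;W^{1,\infty})$ and $D^2K$ is bounded and Lipschitz, the exponential prefactor is finite, and hence each of the three nonnegative terms on the left-hand side vanishes for a.e. $t$.

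It remains to translate these into the asserted identities. From $W_2(\rho_t,r_t)=0$ for a.e. $t$ one gets $\rho_t = r_t$ for a.e. $t$; because $t\mapsto\rho_t$ and $t\mapsto r_t$ are continuous into $\mathcal{P}(\R^d)$ for the flat metric (which, on families with uniformly bounded second moments, is controlled by $W_2$), a density-in-time argument promotes this to $\rho_t = r_t$ for all $t\in[0,T]$. From $\int_{\R^d}|v-u|^2\diff\rho_t = 0$ for a.e. $t$ one gets $u = v$ in $L^2(\diff\rho_t)$ for a.e. $t$, i.e. $u = v$ with respect to $\diff\rho_t\,\diff t$ on $[0,T]\times\R^d$. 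Finally, since $\mu_t$ takes values in the positive semidefinite cone, its total variation measure is dominated by its trace measure, $|\mu_t|\le C\,\mathrm{tr}\,\mu_t$; hence $\mathrm{tr}(\mu_t(\R^d))=0$ for a.e. $t$ forces $\|\mu_t\|_{TV}=0$ for a.e. $t$, that is $\mu\equiv 0$.

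There is no real obstacle here: the corollary is contained in Theorem \ref{thm:stability_initial_datum}. The only points needing (routine) justification are the domination of the total variation of a symmetric positive-semidefinite matrix-valued measure by its trace, and the passage from the a.e.-in-time equality $\rho_t=r_t$ to the everywhere-in-time equality, via the time-continuity built into Definition \ref{def:measure_solution}.
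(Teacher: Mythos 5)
Your proposal is correct and is exactly the argument the paper intends: the corollary is the specialization of Theorem \ref{thm:stability_initial_datum} to the constant sequence $(\rho^n_0,u^n_0)=(\rho_0,u_0)=(r_0,v_0)$, so the right-hand side of the stability estimate vanishes and all three nonnegative terms on the left must be zero. The supplementary details you supply (promotion from a.e.\ $t$ to all $t$ via continuity in the flat metric, and $|\mu_t|\le C\,\mathrm{tr}\,\mu_t$ from Proposition \ref{prop:bound_symmetric_measure_trace}) are the right ones and are consistent with the paper's framework.
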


\begin{cor} 
Suppose that for given initial conditions $(r_0, v_0)$ satisfying Assumption \ref{ass:initial_data}, there exists a strong solution $(r, v)\in C([0, T]; \mathcal{P}(\R^d)) \times L^\infty(0, T; W^{1,\infty}(\R^d))$, $r > 0$ on $[0, T]\times\R^d$ satisfying \eqref{eq:main_sys_strong_weak_strong_argument}.
 Define
    \begin{align}\label{eq:varpi}
        \varpi:= v + \nabla_xK\star r.
    \end{align}
Then   $(r, v,\varpi)$ is a strong solution to
    \begin{align}\label{eq:main_sys_strong_2}
    \left\{\begin{array}{ll}
         \partial_t r + \DIV_x(r\, v) = 0, \\
         \partial_t(r\,\varpi) + \DIV_x(r\,\varpi\otimes v) = 0,
    \end{array} \right. \quad \text{ in }(0, T)\times\R^d.
    \end{align}
    Suppose, moreover, that $K$ satisfies Assumptions \ref{ass:K} (i)--(iii). Then,  $w$ defined in \eqref{eq:final_compatibility_rho_w} coincides with $\varpi$, in the sense that
    $$
   \,w(t, x) = \varpi(t, x)\text{ a.e. in }(0, T)\times\R^d.
    $$
\end{cor}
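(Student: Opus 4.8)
The plan is to derive \eqref{eq:main_sys_strong_2} from \eqref{eq:main_sys_strong_weak_strong_argument} by a purely algebraic manipulation using the definition \eqref{eq:varpi}, and then to identify $w$ with $\varpi$ by invoking the weak–strong uniqueness principle already established in the preceding corollary. First I would check that $(r,v,\varpi)$ solves \eqref{eq:main_sys_strong_2}. The continuity equation is identical in both systems, so nothing is needed there. For the second equation, I would write $r\varpi = rv + r\,(\nabla_x K\star r)$ and differentiate in time. The term $\partial_t(rv)$ is handled by expanding $\partial_t(rv) = (\partial_t r)v + r\,\partial_t v$ and substituting the two equations of \eqref{eq:main_sys_strong_weak_strong_argument}; this reproduces $-\DIV_x(rv\otimes v) - r\int D^2K(x-y)(v(x)-v(y))\,r(y)\diff y$, which is the classical Euler–alignment form. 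For the nonlocal term $\partial_t\big(r\,(\nabla_x K\star r)\big)$, I would use the continuity equation together with the identity $\partial_t(\nabla_x K\star r) = \nabla_x K\star \partial_t r = -\nabla_x K\star \DIV_x(rv)$, and rearrange. The key cancellation, which is exactly the one underlying the formal equivalence \eqref{wu}, is that
\begin{equation*}
\DIV_x\big(r(\nabla_x K\star r)\otimes v\big) + \partial_t\big(r\,\nabla_x K\star r\big) = r\int_{\R^d} D^2K(x-y)\big(v(x)-v(y)\big)r(y)\,\diff y,
\end{equation*}
so that adding the two contributions yields precisely $\partial_t(r\varpi) + \DIV_x(r\varpi\otimes v) = 0$. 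Since $r, v$ are smooth ($r>0$ continuous, $v\in L^\infty_t W^{1,\infty}_x$) and $D^2K$ is bounded and Lipschitz, all these computations are justified classically and the required regularity/integrability (finite second moments of $r_t$, $\nabla_x K\star r\in L^\infty$, etc.) follows from Assumption \ref{ass:K} and the properties of $r$.

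Next, for the identification of $w$ with $\varpi$: by the previous corollary, the measure solution $(\rho,u,\mu)$ arising from the initial data $(r_0,v_0)$ satisfies $\diff\rho_t = \diff r_t$ for all $t$, $u = v$ a.e., and $\mu\equiv 0$. Plugging this into the compatibility relation \eqref{eq:final_compatibility_rho_w} that defines $w$ gives $w\diff\rho_t = v\diff r_t + (\nabla_x K\star \diff r_t)\diff r_t = (v + \nabla_x K\star r)\diff r_t = \varpi\diff r_t$, hence $w = \varpi$ $\rho_t$-a.e., i.e. a.e. on $(0,T)\times\R^d$ once we recall that $\rho_t = r_t$ and $r>0$ so that $\rho_t$-a.e. equality on $\R^d$ upgrades to Lebesgue-a.e. equality. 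This is the content of the claimed conclusion.

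The only genuinely delicate point is the chain of algebraic identities in the first step — specifically making sure the nonlocal convolution term is differentiated and rearranged correctly so that the Euler–alignment forcing term is exactly absorbed. I would carry this out componentwise: writing $(\nabla_x K\star r)(x) = \int \nabla_x K(x-y) r(y)\diff y$, one computes $\partial_t(\nabla_x K\star r)(x) = -\int \nabla_x K(x-y)\,\DIV_y(r v)(y)\diff y = \int D^2 K(x-y)\,v(y)\,r(y)\diff y$ after integration by parts (using decay/integrability of $r$, which holds since $r_t\in\mathcal{P}(\R^d)$ with finite second moment and $D^2K$ bounded). Combined with $\DIV_x\big(r(\nabla_x K\star r)\otimes v\big)$ and the previously obtained expression for $\partial_t(rv) + \DIV_x(rv\otimes v)$, the terms $r\int D^2K(x-y)v(x)r(y)\diff y$ and $r\int D^2K(x-y)v(y)r(y)\diff y$ match up with the right signs to cancel the alignment force, leaving zero. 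Everything else — the moment bounds, the regularity class membership of $\varpi$, and the passage from $\rho_t$-a.e. to Lebesgue-a.e. — is routine given the hypotheses on $r$, $v$ and $K$.
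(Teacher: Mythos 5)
Your proposal is correct and follows essentially the same route as the paper: the paper likewise treats the algebraic equivalence of \eqref{eq:main_sys_strong_weak_strong_argument} and \eqref{eq:main_sys_strong_2} for $\varpi$ defined by \eqref{eq:varpi} as a direct computation, then invokes Corollary \ref{cor:weak-strong-uniq} to get $\diff\rho_t=\diff r_t$, $u=v$, $\mu\equiv 0$, and reads off $w=\varpi$ from \eqref{eq:final_compatibility_rho_w} using $r>0$ — exactly your steps, with you supplying the algebra the paper leaves implicit. One sign slip in your componentwise verification: integration by parts gives $\partial_t(\nabla_x K\star r)(x)=-\int_{\R^d}D^2K(x-y)v(y)\,r(y)\diff y$ with a minus sign (cf.\ \eqref{dtGradK}); your displayed key cancellation identity is nevertheless the correct one, and with this sign corrected the two nonlocal contributions do combine to $r\int D^2K(x-y)(v(x)-v(y))r(y)\diff y$ and cancel the alignment force as claimed.
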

\begin{proof}
    For strong solutions, equations \eqref{eq:main_sys_strong_2} and \eqref{eq:main_sys_strong_weak_strong_argument} are equivalent for $\varpi$ defined by \eqref{eq:varpi}. Thus, by Corollary \ref{cor:weak-strong-uniq}, we know that 
    $$
    \diff\rho_t(x) = \diff r_t(x) \text{ for all }t\in [0, T],\quad u(t, x) = v(t, x)\text{ a.e. in }[0, T]\times\R^d, \quad \mu\equiv 0.
    $$
    Then, from the equation \eqref{eq:final_compatibility_rho_w}
    \begin{align*}
    &r(t,x)\,w(t, x)\diff x = w(t, x)\diff\rho_t = u(t, x)\diff\rho_t + (\nabla_xK\star\diff\rho_t)\diff\rho_t\\
    &= r(t, x)\, v(t, x)\diff x + (\nabla_x K\star r)r\diff x = r(t, x)\,\varpi(t, x)\diff x.
    \end{align*}
    As $r > 0$ the equation above implies
    $$
    w(t, x) = \varpi(t, x),\quad \text{a. e. in }(0, T)\times\R^d.
    $$

\end{proof}

\section{
Basic definitions and auxiliary proposition in the space of measures
}\label{Sec:3}
 The following section is devoted to the introduction of basic concepts in the space of Radon measures, and necessary propositions for the proofs of main results. For more information, we refer our readers to the books \cite{Ambrosio2000, evans2015measure, Dull2021spacesofmeasures, Santambrogio}. We begin our discussion with establishing the definitions that we will use for weak* and weak convergence of measures.

 \begin{Def}
     Let $\{\mu_n\}_{n\in\N}\subset\mathcal{M}(\R^d)$ be a sequence of Radon measures. We say that
    $$
    \mu_n \wstar \mu, \text{ weakly* in }\mathcal{M}(\R^d),
    $$
    whenever
    $$
    \int_{\R^d}f(x)\diff\mu_n(x) \rightarrow \int_{\R^d}f(x)\diff\mu(x)\text{, for every }f\in C_0(\R^d),
    $$
    and we say that
    $$
    \mu_n \rightharpoonup \mu, \text{ weakly in }\mathcal{M}(\R^d),
    $$
    if
    $$
    \int_{\R^d}f(x)\diff\mu_n(x) \rightarrow \int_{\R^d}f(x)\diff\mu(x)\text{, for every }f\in C_b(\R^d).
    $$
\end{Def}

The following propositions explain the connection between weak* convergence of measures and lower-, upper- semicontiuity properties of measures and their variations.

\begin{prop}[Theorem 1, p. 54, \cite{evans2015measure}]\label{prop:portmanteau_lemma}
    Let $\{\mu_n\}_{n\in\N}\subset \M^+(\R^d)$ be a sequence of measures such that
    $$
        \mu_n \wstar \mu\text{ weakly* in }\M(\R^d).
    $$
    Then, for any open $U\subset\R^d$ and compact $K\subset\R^d$
    \begin{align*}
        \liminf_{n\to+\infty}\mu_n(U) \geq \mu(U),\qquad \limsup_{n\to+\infty}\mu_n(K) \leq \mu(K).
    \end{align*}
    Moreover
    \begin{align*}
        \lim_{n\to +\infty}\mu_n(B) = \mu(B), \text{ for any Borel B, such that }\mu(\partial B) = 0.
    \end{align*}
\end{prop}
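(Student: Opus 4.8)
The final statement is Proposition~\ref{prop:portmanteau_lemma} — the portmanteau-type characterization of weak* convergence. Here's my plan.

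\textbf{Setup and strategy.} The plan is to prove this as a standard consequence of the definition of weak* convergence together with approximation of indicator functions of open/compact sets by $C_0$ functions. The main point is that weak* convergence against $C_0(\R^d)$ test functions does not directly give information about $\mu_n(U)$ for a set $U$, since $\mathbbm{1}_U$ is not continuous, so one must sandwich $\mathbbm{1}_U$ between continuous functions and exploit positivity of the measures.

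\textbf{Step 1: lower semicontinuity on open sets.} Let $U\subset\R^d$ be open. For any compact $K\subset U$, by Urysohn's lemma there is $f\in C_c(\R^d)\subset C_0(\R^d)$ with $0\le f\le 1$, $f\equiv 1$ on $K$, and $\supp f\subset U$. Then, using $\mu_n\ge 0$,
\[
\mu_n(U)\ge \int_{\R^d} f\diff\mu_n \xrightarrow[n\to\infty]{} \int_{\R^d} f\diff\mu \ge \mu(K).
\]
Hence $\liminf_n \mu_n(U)\ge \mu(K)$ for every compact $K\subset U$, and taking the supremum over such $K$ and using inner regularity of the Radon measure $\mu$ (so $\mu(U)=\sup\{\mu(K): K\subset U \text{ compact}\}$) gives $\liminf_n \mu_n(U)\ge \mu(U)$.

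\textbf{Step 2: upper semicontinuity on compact sets.} Let $K\subset\R^d$ be compact. Fix $\eps>0$; by outer regularity of $\mu$ choose an open bounded $V\supset K$ with $\mu(V)\le \mu(K)+\eps$. By Urysohn's lemma take $g\in C_c(\R^d)$ with $0\le g\le 1$, $g\equiv 1$ on $K$, $\supp g\subset V$. Then, using $\mu_n\ge 0$,
\[
\mu_n(K)\le \int_{\R^d} g\diff\mu_n \xrightarrow[n\to\infty]{} \int_{\R^d} g\diff\mu \le \mu(V)\le \mu(K)+\eps.
\]
So $\limsup_n \mu_n(K)\le \mu(K)+\eps$ for every $\eps>0$, giving $\limsup_n\mu_n(K)\le \mu(K)$.

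\textbf{Step 3: convergence on continuity sets.} Let $B$ be Borel with $\mu(\partial B)=0$. Write $\mathrm{int}\,B$ for the interior and $\overline{B}$ for the closure. Apply Step 1 to the open set $\mathrm{int}\,B$ and Step 2 to the compact set $\overline{B}$ (if $B$ is bounded; in the general case one intersects with a large ball on whose boundary $\mu$ vanishes, or simply notes $\mu$ is finite and uses outer/inner regularity directly to reduce to bounded sets). Then
\[
\mu(B)=\mu(\mathrm{int}\,B)\le \liminf_n \mu_n(\mathrm{int}\,B)\le \liminf_n \mu_n(B)\le \limsup_n\mu_n(B)\le \limsup_n\mu_n(\overline B)\le \mu(\overline B)=\mu(B),
\]
where the outer equalities use $\mu(\partial B)=0$. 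Hence $\lim_n\mu_n(B)=\mu(B)$.

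\textbf{Main obstacle.} There is no deep obstacle: the only subtlety is making sure the regularity arguments (inner regularity on open sets, outer regularity on compacts) and the reduction to bounded sets are handled cleanly, since $\mu$ is a Radon measure on the noncompact space $\R^d$; one keeps all approximating test functions compactly supported so they lie in $C_0(\R^d)$ and the definition of weak* convergence applies directly. Since this is a verbatim citation of \cite[Theorem 1, p.~54]{evans2015measure}, in the paper it suffices to cite it; the sketch above is the proof one would reproduce if needed.
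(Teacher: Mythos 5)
The paper offers no proof of this proposition at all --- it is quoted as a citation of Evans--Gariepy --- so the only comparison available is with the standard textbook argument, which is essentially what you reproduce. Your Steps 1 and 2 are correct: sandwiching the indicators of $U$ and $K$ between Urysohn functions, using positivity of the $\mu_n$ to pass from integrals to measures of sets, and invoking inner (resp.\ outer) regularity of the Radon measure $\mu$ is exactly the right proof of the two semicontinuity inequalities, and Step 3 for \emph{bounded} $B$ follows correctly by combining them with $\mu(\mathrm{int}\,B)=\mu(B)=\mu(\overline B)$.

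The one genuine flaw is the parenthetical in Step 3 dealing with unbounded $B$. The proposed reduction --- intersect $B$ with a ball $B(0,R)$ whose boundary is $\mu$-null --- gives $\mu_n(B\cap B(0,R))\to\mu(B\cap B(0,R))$ for each fixed $R$, but to conclude $\mu_n(B)\to\mu(B)$ you would additionally need $\sup_n\mu_n(B\setminus B(0,R))\to 0$ as $R\to\infty$, i.e.\ tightness, which is not among the hypotheses. In fact the third claim is false for unbounded $B$ in general: take $\mu_n=\delta_{ne_1}$, so that $\mu_n\wstar 0$ in the $C_0(\R^d)$ duality used in the paper, and $B=\R^d$ with $\partial B=\emptyset$; then $\mu_n(B)=1\not\to 0=\mu(B)$. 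The resolution is that the cited Theorem 1.40(iii) of Evans--Gariepy asserts convergence only for \emph{bounded} Borel sets with $\mu(\partial B)=0$, a word the paper's transcription drops. Your proof becomes complete once claim three is restricted to bounded $B$ (so that $\overline B$ is compact and Step 2 applies directly); in the one place the paper applies the claim to the unbounded set $B=\R^d$ (conservation of mass), the uniform second-moment bounds supply the tightness that repairs the argument.
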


\begin{prop}[Proposition 1.62, \cite{Ambrosio2000}]\label{prop:comparison_between_variations}
    Let $\{\mu_n\}_{n\in\N}\subset\M(\R^d)$ be such that
    $$
    \mu_n \wstar \mu \text{ weakly* in }\M(\R^d),
    $$
    and
    $$
    |\mu_n|\wstar \lambda \text{ weakly* in }\M(\R^d),
    $$
    then $\lambda\geq |\mu|$.
\end{prop}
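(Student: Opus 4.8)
The plan is to reduce the measure inequality $\lambda\ge|\mu|$ to an estimate on integrals against test functions, and then to invoke Proposition~\ref{prop:portmanteau_lemma} for the convergence $|\mu_n|\wstar\lambda$. First I would recall the variational characterisation of the total variation on open sets: by the Riesz representation theorem for Radon measures together with density of $C_c$ in $C_0$, for every open $U\subset\R^d$ one has
\[
|\mu|(U)=\sup\Big\{\int_{\R^d}\varphi\cdot\diff\mu \ :\ \varphi\in C_c(U;\R^m),\ \|\varphi\|_\infty\le 1\Big\},
\]
where $m$ is the dimension of the (scalar-, vector- or matrix-valued) target space. It therefore suffices to bound $\big|\int\varphi\cdot\diff\mu\big|$ by $\lambda(U)$ for each such $\varphi$.

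Fix an admissible $\varphi$ and set $K:=\supp\varphi$, a compact subset of $U$. Since $\varphi\in C_c(\R^d;\R^m)\subset C_0(\R^d;\R^m)$, the weak* convergence $\mu_n\wstar\mu$ gives $\int\varphi\cdot\diff\mu=\lim_{n}\int\varphi\cdot\diff\mu_n$, while for every $n$, using the polar decomposition of $\mu_n$ and $|\varphi|\le\mathbbm{1}_K$,
\[
\Big|\int_{\R^d}\varphi\cdot\diff\mu_n\Big|\le\int_{\R^d}|\varphi|\,\diff|\mu_n|\le|\mu_n|(K).
\]
Because $|\mu_n|\in\M^+(\R^d)$ and $|\mu_n|\wstar\lambda$, the upper-semicontinuity on compact sets in Proposition~\ref{prop:portmanteau_lemma} yields $\limsup_n|\mu_n|(K)\le\lambda(K)\le\lambda(U)$, hence $\big|\int\varphi\cdot\diff\mu\big|\le\lambda(U)$. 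Taking the supremum over admissible $\varphi$ gives $|\mu|(U)\le\lambda(U)$ for every open $U$, and then outer regularity of the finite Radon measures $|\mu|$ and $\lambda$ upgrades this to $|\mu|(A)\le\lambda(A)$ for every Borel set $A$ (via $|\mu|(A)=\inf\{|\mu|(U):U\supset A \text{ open}\}\le\inf\{\lambda(U):U\supset A \text{ open}\}=\lambda(A)$), i.e. $\lambda\ge|\mu|$.

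I do not expect a genuine obstacle here: the only points requiring care are the variational formula for $|\mu|$ on open sets, which rests on the Riesz/Radon framework, and keeping track of the correct direction of the Portmanteau-type estimate — it is the compact-set, $\limsup$ bound that is needed, not the open-set, $\liminf$ one. An essentially equivalent route, avoiding outer regularity, is to test only against $\varphi\in C_c(\R^d;\R^m)$ with $|\varphi(x)|\le\psi(x)$ for a fixed $\psi\in C_c(\R^d)$, $\psi\ge0$; using the polar decomposition $\mu=\sigma|\mu|$ with $|\sigma|=1$ one obtains $\int\psi\,\diff|\mu|\le\int\psi\,\diff\lambda$ for all such $\psi$, whence $|\mu|\le\lambda$. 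This is in the spirit of the argument in \cite{Ambrosio2000}.
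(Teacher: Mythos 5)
Your argument is correct. Note that the paper does not prove this statement at all — it is quoted verbatim as Proposition 1.62 of \cite{Ambrosio2000} — and your proof is essentially the standard one from that reference: the duality formula for $|\mu|$ on open sets, the bound $|\int\varphi\cdot\diff\mu_n|\le|\mu_n|(\supp\varphi)$, and the upper-semicontinuity of $|\mu_n|$ on compact sets (you correctly use the $\limsup$/compact half of the Portmanteau statement rather than the $\liminf$/open half), followed by outer regularity; the alternative route via $\int\psi\,\diff|\mu|\le\int\psi\,\diff\lambda$ for $0\le\psi\in C_c$ is exactly the argument in \cite{Ambrosio2000}.
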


\begin{cor}\label{cor:portmanteau_for_variation}
    From Proposition \ref{prop:portmanteau_lemma} and Proposition \ref{prop:comparison_between_variations} if
    $$
            \mu_n\wstar \mu \text{ weakly* in }\M(\R^d),
    $$
    then for any open $U\subset\R^d$
    $$
    \liminf_{n\to+\infty}|\mu_n|(U) \geq |\mu|(U).
    $$
\end{cor}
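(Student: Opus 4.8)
The plan is to reduce the statement to the two preceding propositions by a subsequence argument, the only non-trivial input being the uniform boundedness of the total variations $|\mu_n|$, which is what allows us to extract weak* limits in the first place.

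First I would note that, since $\int_{\R^d} f \diff\mu_n$ converges for every $f\in C_0(\R^d)$, the sequence $\{\mu_n\}$ is pointwise bounded as a family of bounded linear functionals on the Banach space $C_0(\R^d)$; by the Banach--Steinhaus theorem it is then uniformly bounded, i.e. $\sup_n |\mu_n|(\R^d) < +\infty$. Since $C_0(\R^d)$ is separable, bounded subsets of $\M(\R^d) \cong (C_0(\R^d))^*$ are weak* sequentially compact, so every subsequence of $\{|\mu_n|\}$ admits a further subsequence converging weakly* to some limit in $\M^+(\R^d)$ (positivity being preserved under weak* limits of positive measures).

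Next, fix an open set $U$ and set $\ell := \liminf_{n\to\infty} |\mu_n|(U)$. I would choose a subsequence $\{n_k\}$ realizing the liminf, i.e. $|\mu_{n_k}|(U) \to \ell$, and then a further subsequence $\{n_{k_j}\}$ along which $|\mu_{n_{k_j}}| \wstar \lambda$ for some $\lambda\in\M^+(\R^d)$. Along this sub-subsequence the convergence $\mu_{n_{k_j}} \wstar \mu$ still holds, so Proposition \ref{prop:comparison_between_variations} applies and gives $\lambda \geq |\mu|$. On the other hand, the open-set part of Proposition \ref{prop:portmanteau_lemma}, applied to the sequence $|\mu_{n_{k_j}}| \wstar \lambda$, yields $\liminf_j |\mu_{n_{k_j}}|(U) \geq \lambda(U)$. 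Since $|\mu_{n_{k_j}}|(U)$ is a subsequence of the convergent sequence $|\mu_{n_k}|(U)$, its liminf equals $\ell$, whence $\ell \geq \lambda(U) \geq |\mu|(U)$, which is the asserted inequality.

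I do not expect a genuine obstacle here; the only points that require attention are (i) invoking Banach--Steinhaus to get the uniform bound $\sup_n |\mu_n|(\R^d)<+\infty$ rather than assuming it, and (ii) checking that the weak* convergence $\mu_n \wstar \mu$ is inherited along the extracted sub-subsequence, so that Proposition \ref{prop:comparison_between_variations} is legitimately applicable. If one wishes to bypass Banach--Steinhaus altogether, the argument goes through verbatim whenever the bound $\sup_n |\mu_n|(\R^d)<+\infty$ is supplied directly by the a priori estimates, as is the case in every application of this corollary in the sequel.
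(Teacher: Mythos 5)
Your argument is correct and is exactly the intended deduction: the paper gives no separate proof of this corollary, presenting it as an immediate consequence of Propositions \ref{prop:portmanteau_lemma} and \ref{prop:comparison_between_variations}, and your subsequence extraction (realizing the liminf, then passing to a weak* convergent sub-subsequence of $|\mu_{n}|$ so that both propositions apply) together with the Banach--Steinhaus bound is precisely how one fills in the details.
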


Now we move to the definition of a certain metric on the space of Radon measures, which is crucial in our analysis. 
It turns out, that in some cases this metric can allow us to metrize the weak* convergence.

\begin{Def}
    We call the family of measures $\{\mu_\alpha\}\subset\M(\R^d)$ tight, if and only if for every $\varepsilon > 0$, there exists a compact set $K$ such that for any $\alpha$
    $$
    |\mu_\alpha|(\R^d\setminus K) < \varepsilon.
    $$
\end{Def}

\begin{rem}
    Since any two single Radon measures are tight, it is enough to take the supremum over the function $\varphi\in C^1_c(\R^d)$ in the Definition \ref{def:flat_metric}.
\end{rem}

\begin{prop}[Theorem 2.7, \cite{gwiazda2010anonlinear}]\label{prop:tight_sequence_equivalences}
    For any tight sequence $\{\mu_n\}_{n\in\N}\subset\M(\R^d)$ the following equivalences hold
    \begin{enumerate}
        \item \begin{align*}
            \mu_n \wstar \mu \text{ weakly* in }\M(\R^d)\Longleftrightarrow \left\{\begin{array}{ll}
                 \lim_{n\to +\infty}d_f(\mu_n, \mu) \rightarrow 0 \\
                 \sup_{n\in\N}|\mu_n|(\R^d) < +\infty.
            \end{array}
            \right.
        \end{align*}
        \item For any threshold $r > 0$, the set $\mathcal{K}\subset \{\mu\in \M(\R^d)\,|\,|\mu|(\R^d) \leq r\}$ is relatively compact with respect to the flat metric $d_f$ if the set $\mathcal{K}$ is tight.
    \end{enumerate}
\end{prop}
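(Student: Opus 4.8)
The plan is to prove the two items separately, leaning on three standard tools: the Riesz representation $\M(\R^d)=(C_0(\R^d))^*$, the Banach--Steinhaus (uniform boundedness) theorem, and the Arz\`ela--Ascoli theorem. Tightness is what converts local (compact) estimates into global ones, so it will enter at exactly the point where weak* convergence alone is insufficient.

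\emph{Forward implication of (1).} Assume $\mu_n\wstar\mu$ weakly* in $\M(\R^d)$. The uniform bound $\sup_n|\mu_n|(\R^d)<+\infty$ is immediate: each $\mu_n$ is a bounded functional on the Banach space $C_0(\R^d)$, the sequence is pointwise bounded on $C_0(\R^d)$, hence norm-bounded by uniform boundedness, and the dual norm equals the total variation. For $d_f(\mu_n,\mu)\to0$, fix $\varepsilon>0$; by tightness pick a compact $K$ with $|\mu_n|(\R^d\setminus K)<\varepsilon$ for all $n$ and $|\mu|(\R^d\setminus K)<\varepsilon$, and a smooth cut-off $\chi$ with $\chi=1$ on $K$, $0\le\chi\le1$, $\supp\chi\subset K_1$ compact, and bounded gradient. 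For an admissible test function $\varphi$ (so $\|\varphi\|_\infty\le1$, $\|\nabla_x\varphi\|_\infty\le1$) write $\varphi=\chi\varphi+(1-\chi)\varphi$; since $1-\chi$ vanishes on $K$, the contribution of $(1-\chi)\varphi$ to $\int\varphi\diff(\mu_n-\mu)$ is at most $|\mu_n|(\R^d\setminus K)+|\mu|(\R^d\setminus K)<2\varepsilon$. The family $\{\chi\varphi\}$ (over admissible $\varphi$) is equibounded, equi-Lipschitz, and supported in the fixed compact $K_1$, hence precompact in $C_0(\R^d)$ by Arz\`ela--Ascoli; cover it by finitely many sup-norm balls of radius $\varepsilon$ centred at $\psi_1,\dots,\psi_m\in C^1_c(\R^d)$. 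Using $\int\psi_j\diff(\mu_n-\mu)\to0$ for each $j$ together with the uniform total-variation bound from above, one obtains $\sup_\varphi\bigl|\int\varphi\diff(\mu_n-\mu)\bigr|\le C\varepsilon$ for all large $n$; letting $\varepsilon\to0$ gives $d_f(\mu_n,\mu)\to0$.

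\emph{Reverse implication of (1) and item (2).} For the converse, suppose $d_f(\mu_n,\mu)\to0$ and $\sup_n|\mu_n|(\R^d)=M<\infty$. For $f\in C^1_c(\R^d)$, rescaling $f$ by $1/\max(\|f\|_\infty,\|\nabla_x f\|_\infty)$ produces an admissible test function, so $\bigl|\int f\diff(\mu_n-\mu)\bigr|\le\max(\|f\|_\infty,\|\nabla_x f\|_\infty)\,d_f(\mu_n,\mu)\to0$; for general $f\in C_0(\R^d)$ approximate by $g\in C^1_c(\R^d)$ in the sup norm and estimate the error by $(M+|\mu|(\R^d))\|f-g\|_\infty$, which yields $\mu_n\wstar\mu$. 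For item (2), let $\{\mu_n\}\subset\mathcal{K}$; since $C_0(\R^d)$ is separable and $|\mu_n|(\R^d)\le r$, sequential Banach--Alaoglu gives a subsequence $\mu_{n_k}\wstar\mu$ with $\mu\in\M(\R^d)$ and $|\mu|(\R^d)\le r$ by Corollary \ref{cor:portmanteau_for_variation}. This subsequence is still tight (being contained in $\mathcal{K}$), so by the forward implication of (1) already proved, $d_f(\mu_{n_k},\mu)\to0$. Hence every sequence in $\mathcal{K}$ has a $d_f$-convergent subsequence; since $d_f$ is a genuine metric on $\{\mu\in\M(\R^d):|\mu|(\R^d)\le r\}$ (separation from density of $C^1_c(\R^d)$ in $C_0(\R^d)$), this is precisely relative compactness of $\mathcal{K}$ with respect to $d_f$.

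\emph{Main obstacle.} The delicate point is the forward direction of (1): the supremum defining $d_f$ ranges over an infinite family of test functions, so one cannot merely invoke weak* convergence termwise. The resolution---truncating to a compact set via tightness, reducing the truncated family to a finite sup-norm net by Arz\`ela--Ascoli, and absorbing both the truncation error and the net error using the uniform total-variation bound---is the technical heart of the proof, and it is exactly where tightness, weak* convergence, and (via Banach--Steinhaus) boundedness are used in combination.
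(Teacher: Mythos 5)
Your proof is correct. Note that the paper does not prove this proposition at all --- it is quoted verbatim as Theorem 2.7 of the cited reference \cite{gwiazda2010anonlinear} --- so there is no in-paper argument to compare against; your self-contained proof (Banach--Steinhaus for the uniform total-variation bound, the tightness cutoff plus an Arz\`ela--Ascoli finite net to control the supremum defining $d_f$, and sequential Banach--Alaoglu combined with part (1) for the compactness statement) is the standard route and all steps check out, including the handling of non-compactly-supported admissible test functions via the decomposition $\varphi=\chi\varphi+(1-\chi)\varphi$.
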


\begin{prop}[Proposition 6.1, \cite{chaudhuri2024existence}]\label{prop:lowersemicont_measures}
    Let $g:\R\rightarrow [0, +\infty)$ be a superlinear, convex, and lower semicontinuous function. Then, the functional
    $$
    \M^+(\R^d)\times \M(\R^d)\ni(\mu, \nu) \mapsto\mathcal{G}(\mu, \nu) = \left\{\begin{array}{ll}
         \int_{\R^d}g\left(\frac{\diff\nu}{\diff\mu}\right)\diff \mu,\text{ whenever }\nu << \mu  \\
         +\infty \text{ otherwise,}
    \end{array}\right.
    $$
    is lower semicontinuous with respect to the weak* convergence of measures.
\end{prop}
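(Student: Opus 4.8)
The plan is to write $\mathcal{G}$ as a supremum of functionals that are affine in $(\mu,\nu)$ and individually weak* lower semicontinuous; since a supremum of lower semicontinuous functionals is lower semicontinuous, this gives the claim.

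Since $g$ is convex, lower semicontinuous and superlinear, its Legendre--Fenchel conjugate $g^*(p):=\sup_{s\in\R}\bigl(ps-g(s)\bigr)$ is finite and real-valued, hence a continuous convex function on $\R$; biduality gives $g(s)=\sup_{p\in\R}\bigl(ps-g^*(p)\bigr)$, and $g^*(0)=-\inf_{s\in\R}g(s)\le 0$ because $g\ge 0$. The central step is to prove the dual representation
\begin{equation*}
\mathcal{G}(\mu,\nu)=\sup\left\{\int_{\R^d}\varphi\,\diff\nu-\int_{\R^d}g^*(\varphi)\,\diff\mu\ :\ \varphi\in C_c(\R^d)\right\},\qquad \mu\in\M^+(\R^d),\ \nu\in\M(\R^d).
\end{equation*}
The inequality ``$\ge$'' follows from the Fenchel--Young inequality $\varphi(x)\,\tfrac{\diff\nu}{\diff\mu}(x)\le g\bigl(\tfrac{\diff\nu}{\diff\mu}(x)\bigr)+g^*(\varphi(x))$ integrated against $\mu$ when $\nu\ll\mu$, and is trivial otherwise since then $\mathcal{G}=+\infty$.

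The reverse inequality ``$\le$'' is the technical heart, and I expect it to be the main obstacle. When $\nu\ll\mu$ one recovers $\int g(\tfrac{\diff\nu}{\diff\mu})\,\diff\mu$ by taking $\varphi$ close to a measurable selection $x\mapsto p(x)\in\partial g\bigl(\tfrac{\diff\nu}{\diff\mu}(x)\bigr)$, then truncating $p$, restricting it to a large ball, and mollifying into $C_c(\R^d)$, and finally passing the parameters to the limit by monotone and dominated convergence (using $g\ge 0$ and $g^*(0)\le 0$ to control the remainders); when $\nu\not\ll\mu$, superlinearity of $g$ lets one place large values of $\varphi$ on a small set carrying singular mass, forcing the supremum to be $+\infty$. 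This is the classical argument for integral functionals of a measure (Rockafellar's theory of normal integrands; Bouchitt\'e--Buttazzo). Alternatively, one may bypass the representation: writing $\sigma:=(\mu,\nu)\in\M(\R^d;\R^{1+d})$, one checks $\mathcal{G}(\mu,\nu)=\int_{\R^d}h\bigl(\tfrac{\diff\mu}{\diff|\sigma|},\tfrac{\diff\nu}{\diff|\sigma|}\bigr)\,\diff|\sigma|$ where $h(a,b):=a\,g(b/a)$ for $a>0$, $h(0,0):=0$, and $h:=+\infty$ otherwise is convex, positively one-homogeneous, and lower semicontinuous precisely because $g$ is superlinear; Reshetnyak-type lower semicontinuity for weak* convergent vector measures (see \cite{Ambrosio2000}) then applies directly.

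Granting the representation, the conclusion is immediate. Fix $\varphi\in C_c(\R^d)$ and suppose $\mu_n\wstar\mu$ in $\M^+(\R^d)$ and $\nu_n\wstar\nu$ in $\M(\R^d)$. Then $\int\varphi\,\diff\nu_n\to\int\varphi\,\diff\nu$ because $\varphi\in C_0(\R^d)$, and
\begin{equation*}
\int_{\R^d}g^*(\varphi)\,\diff\mu_n=\int_{\R^d}\bigl(g^*(\varphi)-g^*(0)\bigr)\,\diff\mu_n+g^*(0)\,\mu_n(\R^d),
\end{equation*}
where the first term converges since $g^*(\varphi)-g^*(0)\in C_c(\R^d)$, while $\mu\mapsto-g^*(0)\,\mu(\R^d)$ is weak* lower semicontinuous because $-g^*(0)\ge 0$ and $\mu\mapsto\mu(\R^d)$ is weak* lower semicontinuous on $\M^+(\R^d)$ by Proposition \ref{prop:portmanteau_lemma} applied with $U=\R^d$. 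Hence each functional $(\mu,\nu)\mapsto\int\varphi\,\diff\nu-\int g^*(\varphi)\,\diff\mu$ is weak* lower semicontinuous, and therefore so is their supremum $\mathcal{G}$.
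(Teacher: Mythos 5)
The paper does not actually prove this proposition; it is imported verbatim as Proposition 6.1 of \cite{chaudhuri2024existence}, so there is no in-paper argument to compare against. Your duality route is the classical proof of this fact and it is sound: $g^*$ is indeed finite (superlinearity gives the upper bound, $g^*(p)\ge -g(0)$ the lower one), hence continuous, $g^*(0)\le 0$ follows from $g\ge 0$, and the final lower-semicontinuity step is handled correctly — in particular you correctly isolate the only delicate point of working on the whole space with $C_0$-weak* convergence, namely that $\mu_n(\R^d)$ need not converge, and you absorb it via the sign of $g^*(0)$ and the portmanteau inequality for the open set $U=\R^d$. The ``$\ge$'' half of the dual representation via Fenchel--Young is complete as written. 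The only part you leave as a sketch is the ``$\le$'' half (recovering $\mathcal{G}$ as the supremum, including the blow-up when $\nu\not\ll\mu$); you identify it honestly as the technical heart, your outline (measurable subdifferential selection, truncation, mollification, and concentration of $\varphi$ on a set of small $\mu$-measure carrying singular $\nu$-mass) is the standard and correct one, and the Reshetnyak alternative you mention — applying lower semicontinuity for the positively one-homogeneous perspective integrand $h(a,b)=a\,g(b/a)$ with $h(0,b)=+\infty$ for $b\neq 0$ forced by superlinearity — gives a complete, citable proof from \cite{Ambrosio2000} that bypasses the representation entirely. One small remark for consistency with how the paper uses the result: the proposition is applied to vector-valued $\nu$ (e.g.\ $\nu_t=m_t$ with $g(z)=|z|^2$), and your argument extends verbatim with $\varphi\in C_c(\R^d;\R^m)$ and $g^*$ the conjugate on $\R^m$.
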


Next, we look at an imporant lemma, which allows us to differentiate measures, after we weakly converge with them.

\begin{lem}\label{lem:differentiation_of_limit_measure}
    Let $\{\nu_n\}_{n\in\N}\subset\M(\R^d)$ and $\{\mu_n\}_{n\in\N}\subset\M^+(\R^d)$ be the sequences of finite Radon measures such that
    $$
        \nu_n << \mu_n\text{ for every }n\in\N.
    $$
    Moreover, suppose that there exists a non-decreasing continuous function $f:[0, +\infty)\rightarrow [0, +\infty)$, $f(0) = 0$ such that
    $$
    |\nu_n|(A) \leq f(\mu_n(A)) \text{ for any measurable sets }A,\,n\in\N.
    $$
    and
    \begin{align*}
        \nu_n &\rightharpoonup \nu \text{ weakly* in }\M(\R^d),\\
        \mu_n &\rightharpoonup \mu\text{ weakly* in }\M^+(\R^d).
    \end{align*}
    Then,
    $$
        \nu << \mu.
    $$
\end{lem}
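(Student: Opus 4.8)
The plan is to show $\nu \ll \mu$ by verifying the standard characterization: for every Borel set $A$ with $\mu(A) = 0$, we must have $|\nu|(A) = 0$. Since $\mu$ and $|\nu|$ are finite Radon measures on $\R^d$, they are outer regular, so it suffices to prove the bound $|\nu|(U) \le f(\mu(U))$ for every \emph{open} set $U \subset \R^d$; then, given $\mu(A) = 0$, outer regularity of $\mu$ lets us pick open $U_k \supset A$ with $\mu(U_k) \to 0$, and continuity of $f$ together with $f(0)=0$ gives $|\nu|(A) \le \liminf_k |\nu|(U_k) \le \liminf_k f(\mu(U_k)) = 0$. So the whole lemma reduces to passing the hypothesis $|\nu_n|(A) \le f(\mu_n(A))$ to the limit on open sets.

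First I would record that weak* convergence $\nu_n \wstar \nu$ together with the bound $|\nu_n|(\R^d) \le f(\mu_n(\R^d))$ and $\mu_n(\R^d) \to \mu(\R^d)$ (which follows from $\mu_n \wstar \mu$ in $\M^+$, testing against a function that is $1$ on a large ball — or more carefully, by taking a further subsequence and using Corollary \ref{cor:portmanteau_for_variation}) shows $\sup_n |\nu_n|(\R^d) < +\infty$, so we may extract a subsequence along which $|\nu_n| \wstar \lambda$ for some $\lambda \in \M^+(\R^d)$. By Proposition \ref{prop:comparison_between_variations}, $\lambda \ge |\nu|$. Next I would fix an open set $U$ and apply the lower-semicontinuity part of Proposition \ref{prop:portmanteau_lemma} to the sequences $|\nu_n| \wstar \lambda$ and $\mu_n \wstar \mu$. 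For the numerator this gives $\lambda(U) \le \liminf_n |\nu_n|(U)$ — wait, the inequality from the portmanteau lemma on open sets points the wrong way for $\lambda$; it gives $\liminf_n |\nu_n|(U) \ge \lambda(U)$, i.e. a lower bound on the liminf, which is exactly what we want for $\lambda$. For $\mu$ on the open set $U$ we similarly get $\liminf_n \mu_n(U) \ge \mu(U)$, which is the \emph{wrong} direction. To fix this I instead estimate: for any open $U$,
\begin{align*}
|\nu|(U) \le \lambda(U) \le \liminf_{n\to\infty} |\nu_n|(U) \le \liminf_{n\to\infty} f(\mu_n(U)).
\end{align*}
Now I cannot simply pull $f$ through a $\liminf$ of $\mu_n(U)$ because $f$ is only non-decreasing, not continuous-in-the-limit-of-$\mu_n(U)$ in a useful direction; but I do not need $\mu_n(U) \to \mu(U)$. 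The cleanest route is to use outer regularity of $\mu$ directly at this point: given the open set $U$ and any compact $K \subset U$, choose (later) the test sets carefully. Actually, the simplest correct argument avoids $U$ entirely and works with a fixed Borel $A$ satisfying $\mu(A) = 0$, together with an open $V \supset \overline{A}$ (if $A$ is bounded; the general case follows by a covering/exhaustion argument) chosen so that $\mu(\partial V) = 0$ and $\mu(V)$ is as small as we like — such $V$ exist since $\mu$ is finite, hence has at most countably many "bad" levels. Along such $V$, the last part of Proposition \ref{prop:portmanteau_lemma} gives $\mu_n(V) \to \mu(V)$, and monotonicity of $f$ plus its continuity gives $\liminf_n f(\mu_n(V)) = f(\mu(V))$, whence $|\nu|(A) \le |\nu|(V) \le f(\mu(V))$; letting $\mu(V) \downarrow 0$ finishes it.

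The main obstacle, and the point that needs the most care, is precisely this mismatch of inequality directions in the portmanteau lemma: open sets give the "right" inequality for the variation $|\nu_n|$ (and for $\lambda$) but the "wrong" one for $\mu_n$, while compact sets do the reverse. The resolution is to insert an intermediate \emph{continuity set} of $\mu$ — a Borel set whose boundary is $\mu$-null — on which $\mu_n$ converges to $\mu$ genuinely (the third assertion of Proposition \ref{prop:portmanteau_lemma}), so that the monotone $f$ behaves well. One must also handle the possibly unbounded set $A$: cover $\R^d$ by an increasing sequence of balls, apply the bounded case on each $A \cap B_R$, and use $\sigma$-subadditivity together with $|\nu|(A) = \lim_R |\nu|(A \cap B_R)$. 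Finally, since the conclusion $\nu \ll \mu$ does not depend on the chosen subsequence, and any subsequence of the original sequences has a further subsequence along which the above runs, a standard subsequence-of-subsequence argument upgrades the result to the full sequence. I would also note that $\nu_n \ll \mu_n$ per se is never used beyond guaranteeing the hypothesis inequality makes sense; the real content is the quantitative domination $|\nu_n| \le f \circ \mu_n$ on sets, stable under the limit.
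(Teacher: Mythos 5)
Your overall strategy is the right one, and you correctly isolate the crux: the portmanteau inequalities point the wrong way for $\mu_n$ on open sets, so one must work with a $\mu$-continuity set $V$ on which $\mu_n(V)\to\mu(V)$, then combine $|\nu|(V)\le\liminf_n|\nu_n|(V)\le\liminf_n f(\mu_n(V))=f(\mu(V))$ with the continuity and monotonicity of $f$. This is exactly the mechanism of the paper's proof. However, there is a genuine gap in how you produce the set $V$. You assert that for a bounded Borel set $A$ with $\mu(A)=0$ one can find an open $V\supset\overline{A}$ with $\mu(\partial V)=0$ and $\mu(V)$ arbitrarily small, "since $\mu$ has at most countably many bad levels." This is false as stated: the closure of a $\mu$-null set can carry full mass (take $A=\mathbb{Q}\cap[0,1]$ and $\mu$ Lebesgue measure on $[0,1]$; every open $V\supset\overline{A}=[0,1]$ has $\mu(V)=1$). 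The "countably many bad radii" argument produces continuity \emph{balls}, not continuity neighborhoods of an arbitrary (closure of a) Borel set, and the real difficulty of the lemma is precisely to manufacture an open continuity set that contains the null set but excludes the nearby mass.

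The missing step is the reduction, via inner regularity of $|\nu|$, to \emph{compact} $K\subset A$ (so $\mu(K)=0$ and $|\nu|(A)=\sup_K|\nu|(K)$); this is Step~1 of the paper's proof and it is what makes the construction of $V$ feasible. For compact $K$ one can either argue as the paper does (cover $K$ by finitely many continuity balls, then carve out, using inner regularity and further continuity balls, the mass of the balls lying outside $K$, obtaining an open continuity set $U\supset K$ with $\mu(U)<\varepsilon$), or, more simply, take $V_r=\{x:\dist(x,K)<r\}$: these decrease to $K$ as $r\downarrow 0$, so $\mu(V_r)\downarrow 0$, and since the boundaries $\partial V_r\subset\{\dist(\cdot,K)=r\}$ are pairwise disjoint, all but countably many $r$ give continuity sets. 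Either way, once $K$ is compact your limiting argument goes through verbatim. Your closing remarks (subsequence extraction for $|\nu_n|\wstar\lambda$, the observation that $\nu_n\ll\mu_n$ is only used to make the hypothesis meaningful) are correct but do not repair this step; without the compactness reduction the proof does not close.
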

\begin{proof}
    \underline{Step 1: reduction to compact sets.} Suppose that we know that
    $$
    \mu(K) = 0 \Rightarrow |\nu|(K) = 0, \text{ for every compact set }K.
    $$
    Let $A$ be an arbitrary Borel set such that $\mu(A) = 0$. Then, for any $K\subset A$, which is compact, $\mu(K) = 0$, and by our assumption also $|\nu|(K) = 0$. Hence, by inner regularity
    $$
    |\nu|(A) = \sup\{|\nu|(K)\,|\, K\subset A, K\text{ compact}\} = 0.
    $$
    \underline{Step 2: proof for compact sets.} Fix a compact set $K\subset\R^d$ such that $\mu(K) = 0$. Now for any $x\in K$ fix a radius $r_x$, for which $\mu(\partial B(x, r_x)) = 0$. Indeed we can do that, since $\partial B(x, r) \cap \partial B(x, s) = \emptyset$ for $r\neq s$, there can be only countably many sets of the form $\partial B(x, r)$ of positive measure. In fact, $r_x$ can be arbitrarily small. Thus
    $$
    K\subset \bigcup_{x\in K}B(x, r_x).
    $$
    Since $K$ is compact, we may find a finite cover such that
    $$
    K\subset \bigcup_{k = 1}^N B(x_k, r_k).
    $$
    By inner regularity for the sets $B(x_k, r_k)\setminus K$, for fixed $\varepsilon > 0$, we may find a compact set $C_k\subset B(x_k, r_k)\setminus K$ for which
    $$
    \mu((B(x_k, r_k) \setminus K)\setminus C_k) < \frac{\varepsilon}{N}.
    $$
    Again, as $C_k$ is compact, and $B(x_k, r_k) \setminus K$ is open, similarly as for $K$ we may find a finite open cover of $C_k$ such that
    $$
    C_k \subset \bigcup_{l = 1}^{M_k} B(y_l, r_l), \quad \mu(\partial B(y_l, r_l)) = 0,\text{ and }\overline{B}(y_l, r_l) \subset B(x_k, r_k) \setminus K.
    $$
    Denote $S_k : = \bigcup_{l = 1}^{M_k} \overline{B}(y_l, r_l)$, and $U := \bigcup_{k = 1}^N (B(x_k, r_k) \setminus S_k)$. Then
    \begin{align*}
        \mu((B(x_k, r_k)\setminus S_k)\setminus K) = \mu((B(x_k, r_k) \setminus K)\setminus S_k) \leq \mu((B(x_k, r_k) \setminus K)\setminus C_k) <\frac{\varepsilon}{N}.
    \end{align*}
    Moreover, $S_k$ and $U$ are continuity sets of $\mu$. That is
    \begin{align*}
        &\mu(\partial S_k) \leq \mu\left(\bigcup_{l=1}^{M_k}\partial B(y_l, r_l)\right) \leq \sum_{l = 1}^{M_k}\mu(\partial B(y_l, r_l)) = 0,\\
        &\mu(\partial U) \leq \mu\left(\bigcup_{k=1}^N \partial(B(x_k, r_k) \setminus S_k)\right) \leq \mu\left(\bigcup_{k=1}^N \partial B(x_k, r_k)\cup \partial S_k\right)\\
        &\qquad\quad\leq \sum_{k=1}^N\mu(\partial B(x_k, r_k)) + \mu(\partial S_k) = 0.
    \end{align*}
    Since $K\subset \R^d \setminus S_k$, then $B(x_k, r_k) \cap K \subset B(x_k, r_k) \setminus S_k$. Hence
    $$
    K = \bigcup_{k=1}^N(B(x_k, r_k)\cap K) \subset \bigcup_{k=1}^N(B(x_k, r_k)\setminus S_k).
    $$
    Furthermore
    $$
    |\mu(U) - \mu(K)| = \mu(U\setminus K) = \mu\left(\bigcup_{k=1}^N (B(x_k, r_k)\setminus S_k)\setminus K\right) \leq \sum_{k=1}^N\mu(B(x_k, r_k)\setminus S_k)\setminus K) < \varepsilon.
    $$
    Finally, by the continuity of $f$, Corollary \ref{cor:portmanteau_for_variation} and Proposition \ref{prop:portmanteau_lemma}
    \begin{align*}
        |\nu|(K)\leq |\nu|(U)\leq \liminf_{n\to +\infty}|\nu_n|(U) \leq \liminf_{n\to +\infty}f(\mu_n(U)) = f(\mu(U)) = |f(\mu(U)) - f(\mu(K))| < \delta, 
    \end{align*}
    for arbitrary $\delta > 0$. Therefore $|\nu|(K) = 0$.
\end{proof}

Following, we define the space of symmetric, matrix-valued measures. The main property of such measures, is that their total variation is controlled by their trace.

\begin{Def}
    We say that $\mu\in \mathcal{M}^+(\R^d; \R^{d\times d}_{\mathrm{sym}})$, iff
    $$
    \int_{\R^d}\phi(x)\xi\otimes\xi:\diff\mu \geq 0,
    $$
    for any $\xi\in \R^d$, $\phi\in C_c(\R^d)$, $\phi\geq 0$.
\end{Def}

\begin{prop}[Lemma 3.2, \cite{woznicki2022weak}]\label{prop:bound_symmetric_measure_trace}
    Suppose that $\mu\in \mathcal{M}^+(\R^d; \R^{d\times d}_{\mathrm{sym}})$, then there exists a constant $C > 0$ such that
    $$
    |\mu|(\R^d) \leq C\,\mathrm{tr}\left(\mu(\R^d)\right).
    $$
\end{prop}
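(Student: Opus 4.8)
My plan is to combine the polar (Radon--Nikodym) decomposition of a matrix-valued measure with the elementary fact that, on the cone of symmetric positive semi-definite matrices, the trace is comparable to any matrix norm. First I would reduce to finite measures: testing the defining inequality against $\phi \geq 0$ shows that for every $\xi \in \R^d$ the scalar measure $\mu_\xi := \xi \otimes \xi : \mu$ is nonnegative, so $\mu(A)$ is a symmetric positive semi-definite matrix for every Borel set $A$, and in particular $\mathrm{tr}(\mu(K)) \leq \mathrm{tr}(\mu(\R^d))$ for every compact $K$ by monotonicity of the trace on the cone (since $\mu(\R^d)-\mu(K)=\mu(\R^d\setminus K)\succeq 0$). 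Hence it suffices to establish $|\mu|(K) \leq C\,\mathrm{tr}(\mu(K))$ for the finite nonnegative matrix measure obtained by restricting $\mu$ to a compact $K$, and then let $K \uparrow \R^d$ using inner regularity of $|\mu|$; so we may assume $|\mu|(\R^d) < +\infty$.

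Next I would write the polar decomposition $\mu = \sigma\,|\mu|$ with $\sigma \in L^1(\R^d, |\mu|; \symm)$ and $|\sigma(x)| = 1$ for $|\mu|$-a.e. $x$, and prove the key claim that $\sigma(x)$ is positive semi-definite for $|\mu|$-a.e. $x$. For a fixed $\xi$, the nonnegative measure $\mu_\xi$ is absolutely continuous with respect to $|\mu|$ with density $\xi^T\sigma\xi$, hence $\xi^T\sigma(x)\xi \geq 0$ for $|\mu|$-a.e. $x$; taking a countable dense set $\{\xi_k\}_k \subset \R^d$, outside a single $|\mu|$-null set we get $\xi_k^T\sigma(x)\xi_k \geq 0$ for all $k$, and continuity in $\xi$ upgrades this to all $\xi \in \R^d$, i.e. $\sigma(x) \succeq 0$. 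I expect this step --- passing from the scalar statement ``$\mu_\xi \geq 0$'' (one test direction at a time) to a pointwise statement about the matrix-valued density $\sigma$ --- to be the only genuinely non-routine point; the reduction to finite measures and the linear-algebra estimate below are standard.

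Finally I would invoke the linear-algebra bound: there is $c_d > 0$, depending only on $d$ and on the fixed matrix norm, such that $\mathrm{tr}(A) \geq c_d\,|A|$ for every symmetric $A \succeq 0$ (all eigenvalues $\lambda_i(A)$ are nonnegative, whence $|A| \leq C'\lambda_{\max}(A) \leq C'\sum_i\lambda_i(A) = C'\,\mathrm{tr}(A)$ for a norm-dependent constant $C'$). Applying this with $A = \sigma(x)$, for which $|\sigma(x)| = 1$, gives $\mathrm{tr}(\sigma(x)) \geq c_d$ for $|\mu|$-a.e. $x$, so that
\begin{align*}
\mathrm{tr}(\mu(\R^d)) = \int_{\R^d}\mathrm{tr}(\sigma(x))\diff|\mu|(x) \geq c_d\int_{\R^d}\diff|\mu|(x) = c_d\,|\mu|(\R^d),
\end{align*}
which is the asserted inequality with $C = c_d^{-1}$.
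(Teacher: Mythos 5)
Your proof is correct. Note that the paper itself does not prove this proposition --- it is quoted verbatim from Lemma 3.2 of \cite{woznicki2022weak} --- so there is no in-text argument to compare against; judged on its own, your three steps (positivity of $\mu(A)$ for every Borel $A$, positive semi-definiteness of the polar density $\sigma$ via a countable dense set of directions, and the bound $|A|\leq C\,\mathrm{tr}(A)$ on the PSD cone) are all sound, and the reduction to compact sets correctly handles the a priori possibility $|\mu|(\R^d)=+\infty$. One remark: the polar decomposition is not really needed. Once you know $\mu(A)\succeq 0$ for every Borel $A$ (your first step, which is the only place the defining test-function inequality enters), you can apply the linear-algebra bound directly to the set function: for any Borel partition $\{A_i\}$ of $\R^d$ one has $\sum_i|\mu(A_i)|\leq C\sum_i\mathrm{tr}(\mu(A_i))=C\,\mathrm{tr}(\mu(\R^d))$, and taking the supremum over partitions gives $|\mu|(\R^d)\leq C\,\mathrm{tr}(\mu(\R^d))$ straight from the definition of total variation. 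This bypasses the Radon--Nikodym machinery and the a.e.\ upgrade over a dense set of directions, which you correctly identified as the only delicate point of your route; with the partition argument that point disappears altogether.
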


Let us mention that throughout the paper we will use the notation $\mu\otimes\nu$ as a standard product measure (see for example \cite{evans2015measure}), that is
\begin{Def}\label{def:product_measure}
    Suppose $\mu,\nu\in \M(\R^d)$, then for each $S\subset \R^{2d}$
    \begin{align*}
        \mu\otimes\nu (S) := \inf\left\{\sum_{i = 1}^\infty\mu(A_i)\nu(B_i)\right\},
    \end{align*}
    where the infimum is taken over all collections of the $\mu-$measurable sets $A_i\subset\R^d$, and $\nu-$measurable sets $B_i\subset\R^d$ such that
    $$
    S\subset \bigcup_{i = 1}^\infty A_i\times B_i.
    $$
\end{Def}

At last, for the convenience of the reader, we recall the definition of the Wasserstein metric between probability measures. 

\begin{Def}
    Suppose
    $$
    \mu_1, \mu_2\in\left\{\nu\in \mathcal{P}(\R^d): \int_{\R^d}|x|^p\diff\nu(x) < +\infty\right\},
    $$
    then
    $$
    W^p_p(\mu_1,\mu_2) := \min\left\{\int_{\R^d\times\R^d}|x - y|^p\diff\gamma(x, y),\,\,\gamma\in\Pi(\mu_1, \mu_2)\right\},
    $$
    where
    $$
    \Pi(\mu_1. \mu_2) := \left\{\gamma\in \mathcal{P}(\R^d\times\R^d):\,\, (\pi_1)_{\#}\gamma = \mu_1,\,(\pi_2)_{\#}\gamma = \mu_2 \right\}.
    $$
\end{Def}

\section{Approximation and its convergence}\label{Sec:4}
We begin this section by stating a theorem that establishes the existence of solutions to our viscous approximation. As explained in Appendix~\ref{appendix:existence_for_approximation}, this result is a slight modification of the construction in \cite{mucha2025construction}. The proof is long and involves several approximation layers, accompanied by a number of technical lemmas. To maintain readability, we do not reproduce the full argument here. Instead, in Appendix~\ref{appendix:existence_for_approximation}, we present only the modifications required to adapt the approach of \cite{mucha2025construction}.

The rest of this section is then devoted to the proof of Theorem \ref{thm:main_existence}.

\begin{thm}\label{thm:existence_for_approximation}
    Fix $N > 0$. Suppose that the initial conditions $(\rho^N_0, m^N_0)$ satisfy
    $$
        \rho^N_0(x) \geq 0, \quad \sqrt{\rho_0^N}\in W^{1,2}(\R^d),\quad \int_{\R^d}|x|^2\rho_0^N(x)\diff x < +\infty,
    $$
    and for 
    $$
        F(z) = \frac{1 + z^2}{2}\ln(1+ z^2),
    $$
    we have
    $$
        \int_{\R^d}\rho^N_0 F(|u^N_0|)\diff x < +\infty,
    $$
    where $u^N_0 := \frac{m^N_0}{\rho^N_0}$ on the set $\{x\in\R^d\,|\, \rho^N_0(x) > 0\}$, and $D^2K\in C_b(\R^d)$. Then, there exists $(\rho^N, u^N)$, a solution to 
    \begin{align*}
        \left\{\begin{array}{ll}
             \partial_t \rho^n + \DIV_x(\rho^N u^N) = 0, \\
             \partial_t(\rho^N u^N) + \DIV_x(\rho^N u^N\otimes u^N) - \frac{1}{N}\DIV_x(\rho^N\mathbb{D}u^N) + \rho^N u^N(D^2K\star\rho^N) - \rho^N D^2K\star(\rho^N u^N)= 0,
        \end{array}\right.
    \end{align*}
    in a sense that
    \begin{align*}
        \rho^N\in L^\infty(0, T; L^1(\R^d))\cap C([0, T]; L^{3/2}_{loc}(\R^d))&, \quad \sqrt{\rho^N}\in L^\infty(0, T; W^{1,2}(\R^d)),\\
        \sqrt{\rho^N}u^N\in L^\infty(0, T; L^2(\R^d))&,\quad \overline{\sqrt{\rho^N}\mathbb{D}u^N}\in L^2((0, T)\times \R^d),
    \end{align*}
    which satisfy
    \begin{equation}\label{eq:approx_continuity}
        \begin{split}
            \int_0^t\int_{\R^d}\rho^N\partial_t\phi\diff x\diff s + \int_0^t\int_{\R^d}\rho^N u^N\cdot\nabla_x\phi\diff x\diff s = \int_{\R^d}\rho^N(t, x)\phi(t, x)\diff x - \int_{\R^d}\rho_0^N(x)\phi(0, x)\diff x,
        \end{split}
    \end{equation}
    for all $t\in [0, T]$ and $\phi \in C^1_c([0, T]\times \R^d)$, as well as
    \begin{equation}\label{eq:approx_momentum}
        \begin{split}
            &\int_0^t\int_{\R^d}\rho^N u^N\cdot\partial_t\phi + \rho^N u^N\otimes u^N : \nabla_x\phi - \frac{1}{N}\sqrt{\rho^N}\overline{\sqrt{\rho^N}\mathbb{D}u^n} : \nabla_x\phi \diff x\diff s\\
            &\qquad- \int_0^t\int_{\R^d}\rho^N(x)\phi\int_{\R^d}D^2K(x-y)(u^N(x) - u^N(y))\rho^N(y)\diff y\diff x\diff s\\
            &= \int_{\R^d}\rho^N(t, x)u^N(t, x)\cdot\phi^N(t, x)\diff x - \int_{\R^d}\rho_0^N(x) u^N_0(x)\cdot \phi(0, x)\diff x,
        \end{split}
    \end{equation}
    for a.e. $t\in (0, T)$ and $\phi\in C^1_c([0, T)\times \R^d; \R^d)$. Moreover, for any $\phi\in C^2_c([0, T)\times \R^d; \R^d)$
    \begin{equation*}
        \begin{split}
            &\int_0^t\int_{\R^d}\sqrt{\rho^N}\overline{\sqrt{\rho^N}\mathbb{D}u^n} : \nabla_x\phi \diff x\diff s\\
            &= \int_0^t\int_{\R^d}\rho^N u^N(\Delta\phi + \nabla_x\DIV_x\phi) + 2\left(\nabla_x\sqrt{\rho^N}\otimes \sqrt{\rho^N}u^N\right):\nabla_x\phi\diff x\diff s,
        \end{split}
    \end{equation*}
    and the following relations are satisfied
    \begin{enumerate}
        \item mass conservation
    \begin{equation}\label{eq:approx_mass_conservation}
            \int_{\R^d}\rho^N(t, x)\diff x = \int_{\R^d}\rho^N_0(x)\diff x,
    \end{equation}
    for all $t\in [0, T]$.
    \item energy estimates
    \begin{equation}\label{ineq:approx_energy_ineq}
        \begin{split}
            &\int_{\R^d}\rho^N(t, x)|u^N(t, x)|^2\diff x + \frac{2}{N}\int_0^t\int_{\R^d}\left|\overline{\sqrt{\rho^N}\mathbb{D}u^N}\right|^2\diff x\diff s \\
            &\qquad\qquad \leq 4t\|D^2K\|_\infty e^{4t\|D^2K\|_\infty}\int_{\R^d}\rho^N_0(x)|u^N_0(x)|^2\diff x,\\
            &\int_{\R^d}\rho^N(t, x)|u^N(t, x)|^2\diff x\leq  e^{4t\|D^2K\|_\infty}\int_{\R^d}\rho^N_0(x)|u^N_0(x)|^2\diff x.
        \end{split}
    \end{equation}
    for a.e. $t\in (0, T)$. Moreover, if $D^2K$ is even and positive semi-definite, then
    \begin{equation}\label{ineq:approx_energy_ineq_symmetric}
        \begin{split}
            &\int_{\R^d}\rho^N(t, x)|u^N(t, x)|^2\diff x  + \frac{2}{N}\int_0^t\int_{\R^d}\left|\overline{\sqrt{\rho^N}\mathbb{D}u^N}\right|^2\diff x\diff s \\
            & + \int_0^t\int_{\R^{d\times d}}\rho^N(s, x)\rho^N(s, y)(u^N(s, x) - u^N(s, y))D^2K(x - y)(u^N(s, x) - u^N(s, y))\diff x\diff y\diff s\\
            &\qquad\qquad \leq \int_{\R^d}\rho^N_0(x)|u^N_0(x)|^2\diff x,
        \end{split}
    \end{equation}
    for a.e. $t\in (0, T)$.
    \item second moment estimate
    \begin{equation}\label{ineq:approx_second_moment_ineq}
        \begin{split}
            \int_{\R^d}|x|^2\rho^N(t, x)\diff x \leq C(T, \|D^2K\|_\infty, \|\rho_0^N|u_0^N|^2\|_{L^1_x})\int_{\R^d}|x|^2\rho^N_0(x)\diff x,
        \end{split}
    \end{equation}
    for a.e. $t\in (0, T)$.
    \end{enumerate}
\end{thm}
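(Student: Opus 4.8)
The plan is to treat the nonlocal alignment force as a bounded lower-order perturbation of the degenerate pressureless Navier--Stokes system constructed in \cite{mucha2025construction}, and to re-run the multi-layer approximation scheme of Vasseur--Yu \cite{VaYu2016} developed there, checking at each level that the new term is compatible with the available estimates. Concretely, I would regularize the momentum equation by an artificial pressure $\delta\rho^\beta$ with large $\beta$, a drag term of the form $r_0u+r_1\rho|u|^2u$, and a higher-order capillarity-type term, and solve the fully regularized system by a Galerkin/fixed-point argument; the alignment contribution $\rho u(D^2K\star\rho)-\rho\,D^2K\star(\rho u)$ is Lipschitz in $(\rho,\rho u)$ on bounded subsets of $L^1$, so it creates no difficulty at this stage. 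The bulk of the work is then to derive the three families of uniform estimates \eqref{ineq:approx_energy_ineq}, \eqref{ineq:approx_energy_ineq_symmetric} and \eqref{ineq:approx_second_moment_ineq}, together with the Bresch--Desjardins entropy estimate and the Mellet--Vasseur estimate, and to send the regularization parameters to zero in the order dictated by \cite{mucha2025construction}.

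For the energy estimate I would multiply the momentum equation by $u^N$ and integrate: the viscous term yields the nonnegative dissipation $\tfrac1N\int|\overline{\sqrt{\rho^N}\mathbb{D}u^N}|^2$, while the alignment term contributes
\[
\int_{\R^{2d}}\rho^N(x)\rho^N(y)\,u^N(x)\cdot D^2K(x-y)\big(u^N(x)-u^N(y)\big)\diff x\diff y,
\]
which is bounded in absolute value by $2\|D^2K\|_\infty\int\rho^N|u^N|^2$ (using the conserved total mass and Cauchy--Schwarz); Gr\"onwall then produces the factor $e^{4t\|D^2K\|_\infty}$ of \eqref{ineq:approx_energy_ineq}. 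When $D^2K$ is even, the symmetrization $x\leftrightarrow y$ rewrites this contribution as
\[
\tfrac12\int_{\R^{2d}}\rho^N(x)\rho^N(y)\big(u^N(x)-u^N(y)\big)\cdot D^2K(x-y)\big(u^N(x)-u^N(y)\big)\diff x\diff y,
\]
which is nonnegative under the positive semi-definiteness assumption and hence can be kept on the left-hand side, giving \eqref{ineq:approx_energy_ineq_symmetric}. The Bresch--Desjardins estimate comes from testing the momentum equation against the effective-velocity correction $\tfrac1N\nabla_x\log\rho^N$ and yields $\sqrt{\rho^N}\in L^\infty_tW^{1,2}_x$ uniformly; here I would check that the alignment term paired against this correction stays bounded, which follows again from $D^2K\in C_b$ and the energy bound. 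The Mellet--Vasseur estimate is obtained by testing against $u^N\big(1+\ln(1+|u^N|^2)\big)$, the natural variation of $\rho^NF(|u^N|)$: the alignment term then carries the same logarithmic weight and is dominated by $\|D^2K\|_\infty\big(1+\int\rho^N|u^N|^2\big)$ times that weight, which is integrable by the energy bound, so $\rho^NF(|u^N|)\in L^\infty_tL^1_x$ uniformly; this is exactly what upgrades the weak convergence of $\sqrt{\rho^N}u^N$ to strong convergence of $\rho^Nu^N\otimes u^N$. The second moment bound \eqref{ineq:approx_second_moment_ineq} follows from using $|x|^2$ as a test function in the continuity equation, $\tfrac{d}{dt}\int|x|^2\rho^N=2\int x\cdot u^N\rho^N\le\int|x|^2\rho^N+\int|u^N|^2\rho^N$, combined with the energy estimate and Gr\"onwall.

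With these estimates, the passage to the limit in each regularization layer mirrors \cite{mucha2025construction}: the artificial pressure and drag terms vanish in the weak formulation (it is precisely the density-dependent structure that makes this possible, since these terms enter the energy identity with a definite sign), the BD and Mellet--Vasseur bounds give compactness of $\sqrt{\rho^N}$ in $L^\infty_tW^{1,2}_x\cap C_tL^{3/2}_{loc}$ and of $\sqrt{\rho^N}u^N$ in $L^2$, and the object $\overline{\sqrt{\rho^N}\mathbb{D}u^N}$ — which appears because $\nabla_xu^N$ is not controlled near vacuum — is identified as a weak $L^2$ limit satisfying the stated distributional identity involving $\nabla_x\sqrt{\rho^N}\otimes\sqrt{\rho^N}u^N$. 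The one genuinely new point, which I expect to be the main obstacle, is to confirm that the nonlocal alignment term is stable under all these limits and does not interfere with the renormalization/compactness machinery: for this I would use that $D^2K\star\rho^N\to D^2K\star\rho$ and $D^2K\star(\rho^Nu^N)\to D^2K\star(\rho u)$ locally uniformly — a consequence of $D^2K\in C_b$ together with tightness of $\rho^N$ and the uniform bound on $\rho^Nu^N$ in the space of measures — while $\rho^N$ and $\rho^Nu^N$ converge in the weak/measure sense, so that the products $\rho^N(D^2K\star\rho^N)$ and $\rho^N(D^2K\star(\rho^Nu^N))$ pass to the limit; crucially, unlike the convective term, the alignment term carries no concentration defect because it is linear in the second copy of the density. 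All remaining modifications relative to \cite{mucha2025construction} are routine bookkeeping and are recorded in Appendix~\ref{appendix:existence_for_approximation}.
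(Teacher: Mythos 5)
Your proposal follows essentially the same route as the paper: both treat the alignment force as a bounded bilinear perturbation of the pressureless density-dependent-viscosity system of \cite{mucha2025construction}, derive the same energy bound via $|{\textstyle\int\int}\rho u\,D^2K(u(x)-u(y))\rho|\le 2\|D^2K\|_\infty\int\rho|u|^2$ plus Gr\"onwall (with the symmetrization giving \eqref{ineq:approx_energy_ineq_symmetric}), control the new term in the Bresch--Desjardins and Mellet--Vasseur estimates using only $D^2K\in C_b$ and conservation of mass, obtain the second moment bound from the continuity equation, and pass to the limit in the nonlocal term using strong $L^1$ convergence of $\rho$ and $\rho u$ at each layer. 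The only inaccuracies are cosmetic: the actual regularization of \cite{mucha2025construction} uses $\varepsilon\Delta_x\rho$, a singular term $\eta\nabla_x\rho^{-6}$, $\nu\Delta_x^2u$, $\delta\rho\nabla_x\Delta_x^3\rho$ and a torus truncation rather than an artificial pressure $\delta\rho^\beta$, and the Mellet--Vasseur step requires the truncated test function $\xi(t)\psi_n(|v_{m,k}|)v_{m,k}\phi_{m,k}(\rho)$ together with a Fenchel--Young argument and a weak Gr\"onwall lemma rather than the formal multiplier $u(1+\ln(1+|u|^2))$, but these are exactly the points the paper itself delegates to the appendix and to \cite{mucha2025construction}.
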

\begin{proof}
    We refer the reader to Appendix \ref{appendix:existence_for_approximation}.
\end{proof}

Before moving forward, we  show another set of a priori bounds for the solutions $(\rho^N, u^N)$ given by Theorem \ref{thm:existence_for_approximation}.

\begin{lem}\label{lem:bounds_for_lipshitz_distance}
    Let $(\rho^N, u^N)$ be given by Theorem \ref{thm:existence_for_approximation}, then
    \begin{enumerate}
        \item For every $\varphi\in C_c^\infty(\R^d)$, $\left\{\frac{d}{dt}\int_{\R^d}\varphi(x)\rho^N(t, x)\diff x\right\}_{N\in\N}$ is bounded in $L^\infty(0, T)$ with a constant depending only on initial data and $\|\varphi\|_\infty$, $\|\nabla_x \varphi\|_{\infty}$,\label{bound:approx_Lipsh_density}
        \item For every $\varphi\in C_c^\infty(\R^d)$, $\left\{\frac{d}{dt}\int_{\R^d}\varphi(x)\cdot \rho^N u^N\diff x\right\}_{N\in\N}$ is bounded in $L^2(0, T)$ with a constant depending only on initial data and $\|\varphi\|_\infty$, $\|\nabla_x \varphi\|_{\infty}$.\label{bound:approx_Lipsh_momentum}
    \end{enumerate}
\end{lem}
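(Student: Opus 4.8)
The plan is to derive both bounds directly from the weak formulations \eqref{eq:approx_continuity} and \eqref{eq:approx_momentum} for fixed $N$, using the a priori estimates \eqref{ineq:approx_energy_ineq} and \eqref{ineq:approx_second_moment_ineq}.

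\medskip

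\noindent\textbf{Proof of \eqref{bound:approx_Lipsh_density}.} For $\varphi \in C_c^\infty(\R^d)$ (extended trivially in time so that $\partial_t\varphi = 0$), equation \eqref{eq:approx_continuity} shows that $t \mapsto \int_{\R^d}\varphi(x)\rho^N(t,x)\diff x$ is absolutely continuous with
\begin{equation*}
\frac{d}{dt}\int_{\R^d}\varphi(x)\rho^N(t,x)\diff x = \int_{\R^d}\rho^N u^N\cdot\nabla_x\varphi\diff x.
\end{equation*}
By Cauchy--Schwarz, $\left|\int_{\R^d}\rho^N u^N\cdot\nabla_x\varphi\diff x\right| \leq \|\nabla_x\varphi\|_\infty \left(\int_{\R^d}\rho^N\diff x\right)^{1/2}\left(\int_{\R^d}\rho^N|u^N|^2\diff x\right)^{1/2}$. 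The first factor is bounded by mass conservation \eqref{eq:approx_mass_conservation}, and the second by the energy estimate \eqref{ineq:approx_energy_ineq}, so the bound in $L^\infty(0,T)$ follows with the claimed dependence.

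\medskip

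\noindent\textbf{Proof of \eqref{bound:approx_Lipsh_momentum}.} Similarly, from \eqref{eq:approx_momentum} with time-independent test function $\varphi$, the map $t\mapsto \int_{\R^d}\varphi\cdot\rho^N u^N\diff x$ is absolutely continuous with
\begin{equation*}
\frac{d}{dt}\int_{\R^d}\varphi\cdot\rho^N u^N\diff x = \int_{\R^d}\rho^N u^N\otimes u^N:\nabla_x\varphi - \frac{1}{N}\sqrt{\rho^N}\,\overline{\sqrt{\rho^N}\mathbb{D}u^N}:\nabla_x\varphi\diff x - \int_{\R^{2d}}\rho^N(x)\varphi\, D^2K(x-y)(u^N(x)-u^N(y))\rho^N(y)\diff y\diff x.
\end{equation*}
I would estimate the three terms separately. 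The convective term is bounded by $\|\nabla_x\varphi\|_\infty\int_{\R^d}\rho^N|u^N|^2\diff x$, which is in $L^\infty(0,T)$ by \eqref{ineq:approx_energy_ineq}; this is even stronger than $L^2(0,T)$. The viscous term is controlled by Cauchy--Schwarz and mass conservation: $\frac{1}{N}\|\nabla_x\varphi\|_\infty(\int\rho^N)^{1/2}(\int|\overline{\sqrt{\rho^N}\mathbb{D}u^N}|^2)^{1/2}$, and the time-integral of its square is bounded because $N^{-1}\int_0^T\int_{\R^d}|\overline{\sqrt{\rho^N}\mathbb{D}u^N}|^2\diff x\diff s$ is controlled by \eqref{ineq:approx_energy_ineq} (the factor $N^{-2}$ from squaring is absorbed). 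For the nonlocal term, I bound $|D^2K(x-y)|\leq\|D^2K\|_\infty$ and $|\varphi|\leq\|\varphi\|_\infty$, reducing it to $\|\varphi\|_\infty\|D^2K\|_\infty\int_{\R^{2d}}\rho^N(x)\rho^N(y)(|u^N(x)|+|u^N(y)|)\diff y\diff x$, which by Cauchy--Schwarz and $\rho^N(\R^d)=1$ (or the conserved mass) is bounded by $C\|\varphi\|_\infty\|D^2K\|_\infty(\int_{\R^d}\rho^N|u^N|^2\diff x)^{1/2}$, again in $L^\infty(0,T)$ via \eqref{ineq:approx_energy_ineq}. Collecting these, $\frac{d}{dt}\int\varphi\cdot\rho^N u^N\diff x$ is bounded in $L^2(0,T)$ (in fact in $L^\infty(0,T)$ for all but the viscous term) uniformly in $N$, with constant depending only on the data and on $\|\varphi\|_\infty,\|\nabla_x\varphi\|_\infty$.

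\medskip

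\noindent The argument is essentially routine given the a priori estimates already available; the only point requiring a little care is the viscous term, where one must remember that it is the combination $N^{-1}\int_0^T\int|\overline{\sqrt{\rho^N}\mathbb{D}u^N}|^2$ — not $\int_0^T\int|\overline{\sqrt{\rho^N}\mathbb{D}u^N}|^2$ — that is uniformly bounded, so only an $L^2$-in-time (not $L^\infty$) bound is available for that contribution. This is precisely why the momentum bound in \eqref{bound:approx_Lipsh_momentum} is stated in $L^2(0,T)$ rather than $L^\infty(0,T)$.
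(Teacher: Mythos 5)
Your proposal is correct and follows essentially the same route as the paper: differentiate the weak formulations tested against $\varphi$, then control the resulting terms via mass conservation \eqref{eq:approx_mass_conservation} and the energy estimate \eqref{ineq:approx_energy_ineq}, with the viscous term being the sole reason the momentum bound lands only in $L^2(0,T)$. The one small technicality you gloss over is that \eqref{eq:approx_momentum} requires test functions compactly supported in $[0,T)$ in time, so a time-independent $\varphi$ must first be multiplied by a temporal cutoff equal to $1$ on $[0,t]$, as the paper does.
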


\begin{proof}
    To prove the first claim we test \eqref{eq:approx_continuity} by $\varphi\in C^\infty_c(\R^d)$. We get
    $$
    \int_{\R^d}\varphi(x)\rho^N(t, x)\diff x - \int_{\R^d}\varphi(x)\rho^N_0(x)\diff x = \int_0^t\int_{\R^d}\rho^N u^N\cdot\nabla_x\varphi\diff x\diff s,
    $$
    hence
    $$
    \frac{d}{dt}\int_{\R^d}\varphi(x)\rho^N(t, x)\diff x = \int_{\R^d}\rho^N u^N\cdot\nabla_x\varphi\diff x,\text{ for a.e. }t\in (0, T).
    $$
    Thus, by H\"{o}lder's inequality we obtain
    $$
    \left|\frac{d}{dt}\int_{\R^d}\varphi(x)\rho^N(t, x)\diff x\right| \leq \|\nabla_x\varphi\|_\infty\int_{\R^d}|\rho^N u^N|\diff x \leq \|\nabla_x\varphi\|_\infty \|\rho^N\|_{L^\infty_t L^1_x}^{1/2}\|\rho^N|u^N|^2\|^{1/2}_{L^\infty_t L^1_x},
    $$
    which is bounded by \eqref{eq:approx_mass_conservation} and \eqref{ineq:approx_energy_ineq}. 
    
    For the second claim, we want to test the momentum equation \eqref{eq:approx_momentum} by $\varphi\in C_c^\infty(\R^d)$. Since in this case we need a function with a compact support in $[0, T)$, we may define
    \begin{align*}
        \phi(t, x) := \psi(t)\varphi(x),
    \end{align*}
    where $\psi\equiv 1$ on $[0, t]$, and it smoothly goes to $0$ on an interval $[t, t+\delta) \subset [t, T)$. Then, as $t\in (0, T)$ is arbitrary  
    \begin{align*}
        &\left|\frac{d}{dt}\int_{\R^d}\varphi(x)\cdot\rho^N(t, x)u^N(t, x)\diff x\right| \leq \|\nabla_x\varphi\|_\infty\left(\|\rho^N|u^N|^2\|_{L^\infty_t L^1_x} + \frac{1}{N}\|\rho^N\|_{L^\infty_t L^1_x}\left\|\overline{\sqrt{\rho^N}\mathbb{D}u^N}\right\|_{L^2_x}\right)\\
        &\qquad + \|\varphi\|_\infty\|D^2K\|_\infty\int_{\R^d}\rho^N|u^N|\diff x\int_{\R^d}\rho^N\diff y + \|\varphi\|_\infty\|D^2K\|_\infty\int_{\R^d}\rho^N\diff x\int_{\R^d}\rho^N|u^N|\diff y\\
        &\quad\leq \|\nabla_x\varphi\|_\infty\left(\|\rho^N|u^N|^2\|_{L^\infty_t L^1_x} + \frac{1}{N}\|\rho^N\|_{L^\infty_t L^1_x}\left\|\overline{\sqrt{\rho^N}\mathbb{D}u^N}\right\|_{L^2_x}\right)\\
        &\qquad\quad + 2\|\varphi\|_\infty\|D^2K\|_\infty\|\rho^N\|_{L^\infty_t L^1_x}^{3/2}\|\rho^N|u^N|^2\|_{L^\infty_t L^1_x}^{1/2},
    \end{align*}
    for a.e. $t\in (0, T)$. Here, the right-hand side is bounded in $L^2(0, T)$ by \eqref{eq:approx_mass_conservation} and \eqref{ineq:approx_energy_ineq}.
\end{proof}

Having the existence theorem for approximation, and the sufficient a priori estimates, we can move to the existence proof.

\begin{proof}[Proof of Theorem \ref{thm:main_existence} (Part $1$)]

To start off the proof, let us take a sequence of the approximation of our initial data. Let $\rho_0^N\in C^\infty_c(\R^d)$ be such that
$\rho_0^N\diff x\in \mathcal{P}(\R^d)$ and
$$
\rho^N_0\diff x \rightharpoonup \rho_0,\text{ weakly in }\mathcal{M}^+(\R^d).
$$
Note that such a sequence can be found by a standard considerations involving mollifications of the measure $\rho_0$ and multiplication by the smooth cut-off functions, which ensure compactness of the support. Moreover, since
$$
\int_{\R^d}|x|^2\diff \rho_0 < +\infty,
$$
the sequence $\{\rho^N_0\}_{N\in \N}$ can be taken, so that it satisfies
$$
\sup_{N\in\N}\int_{\R^d}|x|^2\rho^N_0\diff x < +\infty.
$$
For the velocity, we simply fix $u_0^N = u_0\in C_b(\R^d)$. Such a sequence $\{(\rho^N_0, u_0^N)\}_{N\in \N}$ satisfies the assumptions in Theorem \ref{thm:existence_for_approximation}. With this set-up, we may prove the following convergence lemma.

\begin{lem}\label{lem:measure_convergence_lemma}
    Let $(\rho^N, u^N)$ be as in Theorem \ref{thm:existence_for_approximation}, with the initial data $(\rho_0^N, u_0^N)$ given above. Then, there exist $\rho\in C([0, T]; (\M^+(\R^d), d_f))$ and $m\in L^\infty(0, T; (\M(\R^d), d_f))$, such that (up to the subsequence which we do not relabel)
    \begin{align}
        \rho^N &\rightarrow \rho &&\text{ in }C([0, T]; (\M^+(\R^d), d_f)),\label{N_conv_rho}\\
        \rho^N u^N =: m^N &\rightarrow m &&\text{ in }L^\infty(0, T; (\M(\R^d), d_f)).\label{N_conv_m}
    \end{align}
    Moreover, for almost every $t\in (0, T)$, $m_t << \rho_t$, and there exists $u\in L^\infty(0, T; L^2(\R^d, \diff \rho_t))$ such that
    $$
    \diff m_t = u(t, \cdot)\diff \rho_t.
    $$
    Furthermore,
    \begin{align}\label{N_mass_conserv}
        \int_{\R^d}\diff\rho_t(x) = \int_{\R^d}\diff\rho_0(x) = 1,
    \end{align}
    and the second moment and the first moment of respectively $\rho_t$ and $m_t$ are bounded, that is
    \begin{align}
        &\sup_{t\in (0, T)}\int_{\R^d}|x|^2\diff\rho_t < + \infty, \label{bound:second_moment_density}\\
        &\sup_{t\in (0, T)}\int_{\R^d}|x|\diff|m_t| < +\infty.\label{bound:first_moment_momentum}
    \end{align}
\end{lem}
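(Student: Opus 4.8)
The plan is to run an Arzelà--Ascoli argument in the metric space $(\M(\R^d),d_f)$, separately for the densities $\rho^N_t := \rho^N(t,\cdot)\diff x$ and the momenta $m^N_t := \rho^N(t,\cdot)u^N(t,\cdot)\diff x$, extract a subsequence, and then identify the limiting velocity $u$ through the differentiation Lemma~\ref{lem:differentiation_of_limit_measure}.

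\textbf{Compactness in space, for fixed time.} For each $t$, the mass of $\rho^N_t$ equals $1$ by \eqref{eq:approx_mass_conservation}, and Chebyshev's inequality with \eqref{ineq:approx_second_moment_ineq} and the choice of $\rho^N_0$ gives $\rho^N_t(\{|x|\ge R\})\le R^{-2}\int_{\R^d}|x|^2\rho^N(t,x)\diff x\le C R^{-2}$, uniformly in $N,t$; hence $\{\rho^N_t\}_N$ is tight and of bounded mass, so by Proposition~\ref{prop:tight_sequence_equivalences}(2) it lies in a fixed $d_f$-compact subset of $\M^+(\R^d)$. The analogous facts for $m^N_t$ follow from Cauchy--Schwarz: $|m^N_t|(\R^d)\le(\int\rho^N\diff x)^{1/2}(\int\rho^N|u^N|^2\diff x)^{1/2}$ is uniformly bounded by \eqref{eq:approx_mass_conservation}--\eqref{ineq:approx_energy_ineq}, while $|m^N_t|(\{|x|\ge R\})\le R^{-1}(\int|x|^2\rho^N\diff x)^{1/2}(\int\rho^N|u^N|^2\diff x)^{1/2}\le C R^{-1}$ gives tightness.

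\textbf{Equicontinuity in time.} Here I would fix $\varphi$ admissible in Definition~\ref{def:flat_metric} and a smooth cut-off $\chi_R$ with $\chi_R\equiv1$ on $B_R$, $\supp\chi_R\subset B_{2R}$, $\|\nabla_x\chi_R\|_\infty\le C/R$, and split $\varphi=\varphi\chi_R+\varphi(1-\chi_R)$. The function $\varphi\chi_R$ is admissible up to the factor $(1+C/R)$, and Lemma~\ref{lem:bounds_for_lipshitz_distance} bounds $\tfrac{d}{dt}\int\varphi\chi_R\,\rho^N\diff x$ in $L^\infty(0,T)$ and $\tfrac{d}{dt}\int\varphi\chi_R\cdot m^N\diff x$ in $L^2(0,T)$, uniformly in $N$; the tail term $\varphi(1-\chi_R)$ is controlled by the tightness estimates above. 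Integrating in time (and using $\int_s^t|g|\le|t-s|^{1/2}\|g\|_{L^2}$ for $m^N$) yields
$$d_f(\rho^N_t,\rho^N_s)\le(1+C/R)\,C\,|t-s|+C R^{-2},\qquad d_f(m^N_t,m^N_s)\le(1+C/R)\,C\,|t-s|^{1/2}+C R^{-1},$$
and optimizing over $R$ produces moduli of continuity independent of $N$. Arzelà--Ascoli in $C([0,T];(\M,d_f))$ then gives a subsequence (not relabelled) realizing \eqref{N_conv_rho} and \eqref{N_conv_m}, and in particular, by Proposition~\ref{prop:tight_sequence_equivalences}(1), $\rho^N_t\wstar\rho_t$ and $m^N_t\wstar m_t$ weakly* in $\M(\R^d)$ for every $t$.

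\textbf{Identification and bounds.} Since $\rho^N_t(A)=0$ forces $\rho^N u^N=0$ a.e.\ on $A$, we have $m^N_t\ll\rho^N_t$, and Cauchy--Schwarz gives $|m^N_t|(A)\le\big(\sup_{N,t}\int\rho^N|u^N|^2\diff x\big)^{1/2}(\rho^N_t(A))^{1/2}=:f(\rho^N_t(A))$ with $f$ continuous, non-decreasing, $f(0)=0$; Lemma~\ref{lem:differentiation_of_limit_measure} then yields $m_t\ll\rho_t$ for every $t$, and I would set $u(t,\cdot):=\diff m_t/\diff\rho_t$. Lower semicontinuity of $(\mu,\nu)\mapsto\int|\diff\nu/\diff\mu|^2\diff\mu$ (Proposition~\ref{prop:lowersemicont_measures}, applied componentwise with $g(z)=z^2$) gives $\int|u(t,\cdot)|^2\diff\rho_t\le\liminf_N\int\rho^N|u^N|^2\diff x\le e^{4t\|D^2K\|_\infty}\int|u_0|^2\diff\rho_0$, hence $u\in L^\infty(0,T;L^2(\diff\rho_t))$. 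Conservation of mass \eqref{N_mass_conserv} follows by testing $\rho^N_t\wstar\rho_t$ against the constant $1$ (legitimate since tightness upgrades weak* convergence to convergence against $C_b$); \eqref{bound:second_moment_density} follows from $\int(|x|^2\wedge R)\diff\rho_t=\lim_N\int(|x|^2\wedge R)\rho^N\diff x\le\liminf_N\int|x|^2\rho^N\diff x$ and letting $R\to\infty$ with \eqref{ineq:approx_second_moment_ineq}; and \eqref{bound:first_moment_momentum} from $\int|x|\diff|m_t|=\int|x|\,|u(t,\cdot)|\diff\rho_t\le\big(\int|x|^2\diff\rho_t\big)^{1/2}\big(\int|u|^2\diff\rho_t\big)^{1/2}$.

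\textbf{Main difficulty.} I expect the delicate point to be the time-equicontinuity in the flat metric: the derivative bounds of Lemma~\ref{lem:bounds_for_lipshitz_distance} hold only against compactly supported test functions, so one must carry the tail contributions uniformly in $N$ and $t$ and recombine them with the interior estimate without spoiling the uniformity Arzelà--Ascoli needs --- and for $m^N$ only $L^2$-in-time (hence Hölder-$\tfrac12$), rather than Lipschitz, control is available, so the resulting modulus of continuity is not linear.
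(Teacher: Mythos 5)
Your overall strategy is the same as the paper's: fixed-time tightness plus mass/energy bounds for spatial compactness, the time-derivative bounds of Lemma~\ref{lem:bounds_for_lipshitz_distance} for equicontinuity in $d_f$, Lemma~\ref{lem:differentiation_of_limit_distance} --- sorry, Lemma~\ref{lem:differentiation_of_limit_measure} --- with $f(z)=C\sqrt z$ for absolute continuity, Proposition~\ref{prop:lowersemicont_measures} for the $L^2(\diff\rho_t)$ bound on $u$, and truncation/portmanteau arguments for the mass and moment identities. Those parts are correct (your cut-off decomposition for the equicontinuity of $\rho^N$ is a slightly more explicit version of the paper's remark that for tight measures the supremum in $d_f$ may be restricted to $C^1_c$ test functions).

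There is, however, a genuine gap in your treatment of the momentum sequence. You claim that the estimate $d_f(m^N_t,m^N_s)\le C|t-s|^{1/2}+CR^{-1}$ holds for all $t,s$ and then invoke Arzel\`a--Ascoli in $C([0,T];(\M(\R^d),d_f))$ to produce \eqref{N_conv_m}. But the weak momentum formulation \eqref{eq:approx_momentum} holds only for \emph{almost every} $t$, and $\sqrt{\rho^N}u^N\in L^\infty_tL^2_x$ gives no continuity of $t\mapsto m^N_t$; consequently the H\"older-$\tfrac12$ modulus obtained from the $L^2(0,T)$ bound on $\frac{d}{dt}\int\varphi\cdot m^N\diff x$ is only available for a.e.\ pairs $(t,s)$, and $m^N$ is merely an element of $L^\infty(0,T;(\M,d_f))$, not of $C([0,T];(\M,d_f))$. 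The classical Arzel\`a--Ascoli theorem therefore does not apply as you state it. This is exactly the point the paper spends most of its proof on: it selects representatives $\mathbf m^N$, partitions $[0,T]$ into sets on which the essential oscillation in $d_f$ is at most $1/k$ (an a.e.\ version of equicontinuity, following Cherkas), extracts a diagonal subsequence converging at a countable family of sample points, and shows $\{m^N\}$ is Cauchy in $L^\infty(0,T;(\mathcal K,d_f))$, using in addition the completeness of $\mathcal K=\{\mu:\ |\mu|(\R^d)\le C,\ \int|x|\diff|\mu|\le C\}$ for $d_f$ to obtain the limit $m$. Your argument needs to be replaced by (or supplemented with) such an $L^\infty$-in-time compactness argument. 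A second, more minor, omission: defining $u(t,\cdot):=\diff m_t/\diff\rho_t$ separately for each $t$ does not by itself give joint measurability of $u$ in $(t,x)$; the paper handles this by passing to the product measures $\diff m_t\otimes\diff t\ll\diff\rho_t\otimes\diff t$ and applying Radon--Nikodym on $(0,T)\times\R^d$.
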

\begin{proof}
    First, we show the convergence for $\{\rho^N\}_{N\in\N}$. We aim to use the Arzel\`{a}-- Ascoli theorem. The boundedness of the sequence in $C([0, T]; (\M^+(\R^d), d_f))$ is a direct consequence of \eqref{eq:approx_mass_conservation}. Indeed,
    \begin{align*}
    \sup_{N\in \N}\sup_{t\in [0, T]}d_f(\rho^N(t), 0) \leq \sup_{N\in \N}\sup_{t\in [0, T]}\int_{\R^d}\rho^N(t)\diff x = 1.
    \end{align*}
    Moreover, by the second moment bound in \eqref{ineq:approx_second_moment_ineq}, and the Chebyshev inequality we deduce the tightness of the family $\{\rho^N(t)\}_{N\in\N}$. Hence, by Proposition \ref{prop:tight_sequence_equivalences} we see, that our sequence is relatively compact. At last we need to check the equi-continuity in flat metric. Fix an arbitrary $\varphi\in C_c^\infty(\R^d)$, and $t, s\in [0, T]$. Then, by  Lemma \ref{lem:bounds_for_lipshitz_distance}
    \begin{align}\label{ineq:arzela_ascoli_argument_modulus_of_cont}
        \left|\int_{\R^d}\varphi(\rho^N(t) - \rho^N(s))\diff x\right| = \left|\int_s^t\frac{d}{dt}\int_{\R^d}\varphi\,\rho^N\diff x\diff\tau\right| \leq C(\|\varphi\|_\infty, \|\nabla_x\varphi\|_\infty)|t - s|.
    \end{align}
    Since the constant above depends only on $\|\varphi\|_\infty$ and $\|\nabla_x \varphi\|_\infty$ we can take supremum over $\varphi$ in \eqref{ineq:arzela_ascoli_argument_modulus_of_cont} to obtain
    $$
    d_f(\rho^N(t), \rho^N(s)) \leq C|t-s|,
    $$
    which is the last ingredient to use the Arzel\`{a}--Ascoli theorem and conclude the existence of $\rho\in C([0, T]; (\M^+(\R^d), d_f))$, such that
    $$
    \rho^N \rightarrow \rho \text{ in }C([0, T]; (\M^+(\R^d), d_f)),
    $$
    which finishes the proof of \eqref{N_conv_rho}.
    
    When it comes to the momentum, we can again utilize Lemma \ref{lem:bounds_for_lipshitz_distance} to obtain
    \begin{align}\label{ineq:arzela_ascoli_argument_modulus_of_cont_momentum}
        \left|\int_{\R^d}\varphi(\rho^N(t)u^N(t) - \rho^N(s)u^N(s))\diff x\right| = \left|\int_s^t\frac{d}{dt}\int_{\R^d}\varphi\,\rho^Nu^N\diff x\diff\tau\right| \leq C(\|\varphi\|_\infty, \|\nabla_x\varphi\|_\infty)\sqrt{|t - s|},
    \end{align}
    for a.e. $t, s\in (0, T)$. Since this holds only almost everywhere, it is not enough to proceed in a space of continuous functions, but only in $L^\infty$. Here, we may follow an argument from \cite{cherkas1970compactness}, but as it is short, and only given for functions taking real or complex values, and not in an arbitrary complete metric space, we will give the whole argument with appropriate changes for the convenience of the reader. 
    
    By \eqref{ineq:approx_energy_ineq}, \eqref{eq:approx_mass_conservation}, and the H\"{o}lder inequality
    \begin{align*}
        \sup_{N\in \N}\sup_{t\in (0, T)}\int_{\R^d}|m^N|\diff x \leq \sup_{N\in \N}\sup_{t\in (0, T)}\sqrt{\int_{\R^d}\rho^N(t)\diff x}\sqrt{\int_{\R^d}\rho^N(t)|u^N(t)|^2\diff x} \leq C.
    \end{align*}
    Moreover using \eqref{ineq:approx_energy_ineq}, \eqref{ineq:approx_second_moment_ineq}, and Young's inequality
    \begin{align*}
        \sup_{t\in (0, T)}\int_{\R^d}|x||m^N|\diff x \leq \frac{1}{2}\sup_{t\in (0, T)}\int_{\R^d}|x|^2\rho^N\diff x + \frac{1}{2}\sup_{t\in (0, T)}\int_{\R^d}\rho^N|u^N|^2\diff x \leq C.
    \end{align*}
    Let $\mathbf{m}^N$ be a particular function chosen from the equivalence class $m^N$, such that $\|\mathbf{m}^N(t)\|_{TV} \leq C$, and $\||x|\mathbf{m}^N(t)\|_{TV} \leq C$ for every $t\in [0, T]$. For any non-empty, measurable set $E\subset [0, T]$ define
    $$
    |\mathbf{m}^N|_E := \inf_{R}\sup_{t,s\in E\setminus R}d_f(\mathbf{m}^N(t), \mathbf{m}^N(s)),
    $$
    where the infimum is taken over all sets $R\subset[0, T]$ of measure zero. By virtue of \eqref{ineq:arzela_ascoli_argument_modulus_of_cont_momentum} there exists a partition $\{E_{k, j}\}_{j=1}^{l_k}$, such that
    \begin{align*}
        \max_j\sup_{N}|\mathbf{m}^N|_{E_{k, j}} \leq \frac{1}{k}.
    \end{align*}
    Moreover, for each $E_{k, j}$, we may find a set $F_{k, j}\subseteq E_{k, j}$, such that $E_{k, j}\setminus F_{k, j}$ is of measure zero, and
    \begin{align*}
        \sup_{N}\sup_{t, s\in F_{k, j}}d_f(\mathbf{m}^N(t), \mathbf{m}^N(s)) \leq \frac{1}{k}.
    \end{align*}
    Now, from any $F_{k, j}$ we select an arbitrary point $s_{k, j}$. Then, the sequence $\{\mathbf{m}^N(s_{k, j})\}$ is bounded in $(\mathcal{M}(\R^d), \|\cdot\|_{TV})$, and since its first moment is bounded, by the Chebyshev inequality, it is also tight. Hence, for $k = 1$, by Proposition \ref{prop:tight_sequence_equivalences}, there exists a subsequence, which we do not relabel, such that $\{\mathbf{m}^N\}_{N\in \N}$ converges in $(\mathcal{M}(\R^d), d_f)$ in every point $\{s_{1, j}\}_{j = 1}^{l_1}$. Since the number of points is finite, we can make this convence uniform with respect to them. Using a diagonal argument, we can find a subsequence (which we again do not relabel), such that $\{\mathbf{m}^N\}_{N\in\N}$ converges in every point $\bigcup_{k\in \N}\{s_{k, j}\}_{j = 1}^{l_k}$, and this convergence is uniform on finite subsets. Fix $\varepsilon > 0$, and $k\in \N$ for which $\frac{1}{k} < \frac{\varepsilon}{3}$. Since $\{\mathbf{m}^N\}$ converges uniformly on $\{s_{k, j}\}_{j = 1}^{l_k}$, then there exists some $N_\varepsilon\in \N$ such that if $N, M\geq N_{\varepsilon}$
    \begin{align*}
        \max_{j}d_f(\mathbf{m}^N(s_{k, j}), \mathbf{m}^M(s_{k, j})) \leq \frac{\varepsilon}{3}.
    \end{align*}
    For any $s\in F_{k, j}$ we have
    \begin{align*}
        d_f(\mathbf{m}^N(s), \mathbf{m}^M(s)) \leq d_f(\mathbf{m}^N(s), \mathbf{m}^N(s_{k, j})) + d_f(\mathbf{m}^N(s_{k, j}), \mathbf{m}^M(s_{k, j})) + d_f(\mathbf{m}^M(s_{k, j}), \mathbf{m}^M(s)) < \varepsilon,
    \end{align*}
    thus 
    $$
    \|m^N - m^M\|_{L^\infty(0, T; (\mathcal{M}(\R^d), d_f))} < \varepsilon.
    $$
    It is easy to see that (by Proposition \ref{prop:tight_sequence_equivalences}, see also the argument for \eqref{bound_second_moment_in_approx}) $\mathcal{K} := \{\mu\in \mathcal{M}(\R^d):\, |\mu|(\R^d)\leq C,\, \int_{\R^d}|x|\diff|\mu|(x) \leq C\}$ is complete with respect to the $d_f$ metric, thus so is $L^\infty(0, T; (\mathcal{K}, d_f))$. Hence, the Cauchy sequence $\{m^N\}$ has a limit $m\in L^\infty(0, T; (\mathcal{K}, d_f))$, and thus \eqref{N_conv_m} is verified.
    
    Moving forward, we want to differentiate $m$ with respect to $\rho$. Using H\"{o}lder inequality and \eqref{ineq:approx_energy_ineq}, we can easily see that for any Borel $A\subset\R^d$
    \begin{align*}
        |m_t^N|(A) = \int_A \rho^N\,|u^N|\diff x \leq C\sqrt{\int_A \rho^N\diff x} = C\sqrt{\rho^N_t(A)}.
    \end{align*}
    Thus, by Lemma \ref{lem:differentiation_of_limit_measure} $m_t << \rho_t$. Hence, also $\diff m_t\otimes \diff t << \diff\rho_t \otimes \diff t$ (see Definition \ref{def:product_measure} for the definition of the product measure), and by the Radon--Nikodym theorem, there exists a function $u\in L^1((0, T)\times \R^d)$ such that
    $$
    \diff m_t\otimes\diff t = u(t, x)\diff \rho_t\otimes\diff t,
    $$
    which readily implies that
    $$
        \diff m_t = u(t,\cdot)\diff\rho_t \text{ for almost every }t\in (0, T).
    $$
    Thus, using Proposition \ref{prop:lowersemicont_measures} and \eqref{ineq:approx_energy_ineq}
    \begin{align*}
        \int_{\R^d}|u(t, x)|^2\diff\rho_t(x) \leq e^{4t\|D^2K\|_{\infty}}\int_{\R^d}|u_0|^2\diff\rho_0(x).
    \end{align*}
    In particular
    $$
        u\in L^\infty(0, T; L^2(\diff\rho_t)).
    $$
    To show the bounds on the moments, we utilize the Fubini theorem, Proposition \ref{prop:portmanteau_lemma}, the Fatou lemma, and \eqref{ineq:approx_second_moment_ineq}
    \begin{equation}\label{bound_second_moment_in_approx}
        \begin{split}
            \int_{\R^d}|x|^2\diff\rho_t &= \int_0^\infty \rho_t\{|x|^2 > s\}\diff s \leq \int_0^\infty \liminf_{N\to +\infty}\rho^N_t\{|x|^2 > s\}\diff s\\
            &\leq \liminf_{N\to + \infty}\int_0^\infty\rho^N_t\{|x|^2 > s\}\diff s = \liminf_{N\to +\infty}\int_{\R^d}|x|^2\rho^N\diff x < +\infty.
        \end{split}
    \end{equation}
    At last, to show the mass conservation, we note that $\R^d$ is trivially a continuity set of the measure $\rho_t$, hence, by Proposition \ref{prop:portmanteau_lemma}
    $$
    1 = \lim_{N\to +\infty}\int_{\R^d}\rho^N(t, x)\diff x = \int_{\R^d}\diff\rho_t(x).
    $$
    The proof of Lemma \ref{lem:measure_convergence_lemma} is thus complete.
\end{proof}

Let us now return to the proof of Theorem \ref{thm:main_existence} (Part $1$).
The regularity and the moment bounds of $\rho$ and $m$ necessary for Definition \ref{def:measure_solution} are given already in Lemma \ref{lem:measure_convergence_lemma}. The aforementioned lemma allows also to converge in the   approximate continuity equation \eqref{eq:approx_continuity} to \eqref{eq:final_mass_conservation}. To converge in \eqref{eq:approx_momentum}, the terms containing only $\rho^N u^N$ are easy to handle. For a convective term we use the embedding
$$
L^\infty(0, T; L^1(\R^d)) \hookrightarrow L^\infty(0, T; \mathcal{M}(\R^d)),
$$
which, using the Banach--Alaouglu theorem, allows us to define
$$
\mu := \mathrm{weak^*}\lim_{N\to \infty}(\rho^Nu^N\otimes u^N) -  u\otimes u\diff\rho.
$$
To show that $\mu_t\in \M^+(\R^d; \R^{d\times d}_{\mathrm{sym}})$, we note that by Proposition \ref{prop:lowersemicont_measures}, for any $\xi\in \R^d$, $\phi\in C_c(\R^d)$, $\phi \geq 0$
\begin{align*}
    &\int_{\R^d}\phi(x)\xi\otimes\xi : \diff\mu_t = \lim_{N\to+\infty}\int_{\R^d}\phi(x)\rho^N\xi\otimes\xi : u^N\otimes u^N\diff x - \int_{\R^d}\phi(x)\xi\otimes\xi:u\otimes u\diff\rho_t\\
    &\qquad\geq\lim_{N\to+\infty}\int_{\R^d}\phi(x)\rho^N|\xi u^N|^2\diff x - \int_{\R^d}\phi(x)|\xi u|^2\diff\rho_t \geq 0.
\end{align*}
Concerning the term with the symmetric gradient of $u^N$, we note that it disappears  by \eqref{eq:approx_continuity} and \eqref{ineq:approx_energy_ineq}, as we have by H\"{o}lder's inequality
\begin{equation*}
    \begin{split}
        \left|\int_0^t\int_{\R^d}\frac{1}{N}\sqrt{\rho^N}\overline{\sqrt{\rho^N}\mathbb{D}u^n} : \nabla_x\phi \diff x\diff s\right| \leq \frac{1}{\sqrt{N}}\|\nabla_x\phi\|_\infty\|\rho^N\|_{L^1_{t,x}}^{1/2}\left\|\frac{1}{\sqrt{N}}\overline{\sqrt{\rho^N}\mathbb{D}u^n}\right\|_{L^2_{t,x}} \rightarrow 0,
    \end{split}
\end{equation*}
as $N\to +\infty$. 

At last, we look at the convergence of the non-local term. Let $\phi\in C^1_c([0, T)\times \R^d)$ be arbitrary. By \eqref{N_mass_conserv} and \eqref{bound:first_moment_momentum} we get tightness of the sequence $\{\diff m^N_t\otimes \diff\rho^N_t\}$. Thus there exist compact sets $V_1, V_2\subset \R^{d}$ (note, that these sets do not depend on $t$ as the bounds on the second moment of $\rho^N_t$ and the first moment of $m^N_t$ are also independent of $t$) such that for every $N\in \N$
\begin{align}\label{thightness_of_product_sequence}
|m^N_t\otimes \rho^N_t|(\R^{2d}\setminus (V_1\times V_2)) < \varepsilon.
\end{align}
Since the measure $\diff m_t\otimes \diff\rho_t$ is tight as well, we can take $V_1\times V_2$  large enough, so that
\begin{align}\label{tightness_of_product}
|m_t\otimes \rho_t|(\R^{2d}\setminus(V_1\times V_2)) < \varepsilon.
\end{align}
Because function
$$
\Phi(x, y) := D^2K(x-y)
$$
is in $C(V_1\times V_2)$, by the Stone--Weierstrass theorem, we may find functions $\psi_i\in C(V_i)$ such that
\begin{align}\label{stone_weierstrass_uniform}
\|\Phi(x,y) - \psi_1(x)\psi_2(y)\|_{C(V_1\times V_2)} < \varepsilon.
\end{align}
Then,
\begin{equation*}
    \begin{split}
        &\left|\int_0^t\int_{\R^d}\phi\,\rho^N(x)\int_{\R^d}D^2K(x-y)u^N(x)\rho^N(y)\diff y\diff x\diff\tau - \int_0^t\int_{\R^d}\phi\int_{\R^d}D^2K(x-y)\diff\rho_t(y)\diff m_t(x)\diff\tau\right| \\
        &\leq \left|\int_0^t\int_{\R^{2d}\setminus(V_1\times V_2)}\phi\,\rho^N(x)D^2K(x-y)u^N(x)\rho^N(y)\diff y\diff x\diff\tau\right|\\
        &\qquad +\left|\int_0^t\int_{\R^{2d}\setminus(V_1\times V_2)}\phi\,D^2K(x-y)\diff\rho_t(y)\diff m_t(x)\diff\tau\right|\\
        &\qquad + \Bigg|\int_0^t\int_{V_1\times V_2}\phi\,\rho^N(x) D^2K(x-y)u^N(x)\rho^N(y)\diff y\diff x\diff\tau\\
        &\qquad\qquad- \int_0^t\int_{V_1\times V_2}\phi\,\rho^N(x) \psi_1(x)\psi_2(y)u^N(x)\rho^N(y)\diff y\diff x\diff\tau\Bigg|\\
        &\qquad + \left|\int_0^t\int_{V_1\times V_2}\phi\,\psi_1(x)\psi_2(y)\diff\rho_t(y)\diff m_t(x)\diff\tau- \int_0^t\int_{V_1\times V_2}\phi\,D^2K(x-y)\diff\rho_t(y)\diff m_t(x)\diff\tau\right|\\
        &\qquad + \left|\int_0^t\int_{V_1\times V_2}\phi\,\rho^N(x) \psi_1(x)\psi_2(y)u^N(x)\rho^N(y)\diff y\diff x\diff\tau - \int_0^t\int_{V_1\times V_2}\phi\,\psi_1(x)\psi_2(y)\diff\rho_t(y)\diff m_t(x)\diff\tau\right|\\
        & = I_1 + I_2 + I_3 + I_4 + I_5.
    \end{split}
\end{equation*}
By \eqref{thightness_of_product_sequence} and \eqref{tightness_of_product}
$$
I_1 + I_2 \leq 2\|\phi\|_\infty\|D^2K\|_\infty\varepsilon,
$$
and by \eqref{stone_weierstrass_uniform}, together with \eqref{ineq:approx_energy_ineq}
$$
I_3 + I_4 \leq C\|\phi\|_\infty \varepsilon.
$$
Finally, for the last term, using the convergences \eqref{N_conv_rho}, \eqref{N_conv_m} from Lemma \ref{lem:measure_convergence_lemma}, using product structure of the integral, we get 
$$
I_5 \to 0, \text{ as }N\to +\infty.
$$
Since $\varepsilon>0$ is arbitrary we get the needed convergence. With this, we have proven the weak formulations \eqref{eq:final_mass_conservation} and \eqref{eq:final_momentum_conservation}, whenever $\phi\in C^1_c([0, T]\times \R^d)$ or $\phi\in C^1_c([0, T)\times \R^d)$ respectively. We want to extend the class of admissible test functions. As the extension for \eqref{eq:final_momentum_conservation} is similar as for \eqref{eq:final_mass_conservation}, we will only explain it for \eqref{eq:final_mass_conservation}. Fix $\phi\in C^1([0, T]\times \R^d)$, such that $|\phi(t, x)|, |\partial_t\phi(t, x)|\leq C(1 + |x|^2)$, $|\nabla_x\phi(t, x)|\leq C(1 + |x|)$, for some $C > 0$. Let $\psi_R\in C^\infty_c(\R^d)$ be a standard cut-off function, that is $\psi_R\equiv 1$ on $B(0, R)$, $\psi_R\equiv 0$ on $\R^d \backslash B(0, R+1)$, $0 \leq \psi_R \leq 1$ and $|\nabla_x \psi_R| \leq 1$. Then, the function $\Psi_R(t, x) := \psi_R(x)\phi(t, x)$ is in $C^1_c([0, T]\times \R^d)$, which means it is an admissible test function for \eqref{eq:final_mass_conservation}. With this set up we have
\begin{align*}
    \Psi_R (t, x) &\rightarrow \phi(t, x),\text{ pointwisely in } [0, T]\times \R^d,\\
    \partial_t\Psi_R (t, x) &\rightarrow \partial_t\phi(t, x),\text{ pointwisely in } [0, T]\times \R^d,\\
    \nabla_x\Psi_R (t, x) &\rightarrow \nabla_x\phi(t, x),\text{ pointwisely in } [0, T]\times \R^d.
\end{align*}
Moreover 
\begin{align*}
    |\Psi_R(t, x)| &\leq C(1 + |x|^2) \in L^1(0, T;L^1(\diff\rho_t)),\\
    |\partial_t\Psi_R(t, x)| &\leq C(1 + |x|^2)\in L^1(0, T; L^1(\diff\rho_t)),\\
    |\nabla_x\Psi_R(t, x)| &\leq C(1 + |x|)\in L^1(0, T; L^1(\diff m_t)),
\end{align*}
by \eqref{bound:second_moment_density} and \eqref{bound:first_moment_momentum}. Hence, by Lebesgue's dominated convergence theorem we can converge with $R\to +\infty$ in \eqref{eq:final_mass_conservation} tested by $\Psi_R$, and obtain the extension of possible test functions.

Now, we focus our attention on the energy inequality \eqref{ineq:final_energy_inequality}. Note that due to the fact that trace is a linear function, it is invariant with respect to the weak and weak* convergence. Thus, by the virtue of \eqref{ineq:approx_energy_ineq}
\begin{equation}\label{ineq:proof_N_conv_energy}
\begin{split}
    &\frac{1}{2}\int_{\R^d}|u(t, x)|^2\diff\rho_t(x) + \frac{1}{2}\mathrm{tr}(\mu_t(\R^d))\\
    &\phantom{=}\leq \frac{1}{2}\int_{\R^d}|u(t, x)|^2\diff\rho_t(x) + \frac{1}{2}\liminf_{N\to +\infty}\int_{\R^d}\mathrm{tr}(\rho^N u^N\otimes u^N)\diff x -  \frac{1}{2}\int_{\R^d}\mathrm{tr}(u\otimes u)\diff \rho_t(x)\\
    &\phantom{=}= \frac{1}{2}\liminf_{N\to +\infty}\int_{\R^d}\rho^N |u^N|^2\diff x \leq e^{4t\|D^2K\|_\infty}\frac{1}{2}\int_{\R^d}|u_0|^2\diff\rho_0,
\end{split}
\end{equation}
with a similar approach, whenever $D^2K$ is even and positive semi-definite.
\end{proof}

Here, we have shown the first part of  Theorem \ref{thm:main_existence}, under the assumption that the initial velocity is regular, i.e.  $u_0\in C_b(\R^d)$. We will get rid of this restriction in the third part of the proof. For now, we  move on to proving the second part of  Theorem \ref{thm:main_existence} for $u_0\in C_b(\R^d)$, with $w$ defined in \eqref{eq:final_compatibility_rho_w}. 

As the entry to our discussion, we note that  $D^2 K\in C_b(\R^d)$ implies that $\nabla_x K$ is Lipschitz, hence
\begin{align}\label{ineq:lipshitz_nabla_K}
    |\nabla_x K(x)| \leq C(\|D^2K\|_\infty)(1 + |x|),
\end{align}
which will be heavily exploited in the arguments below. 

In what follows we state a simple reformulation of Theorem \ref{thm:existence_for_approximation}.

\begin{thm}\label{thm:existence_for_approximation_rho_w}
    Fix $N > 0$. Suppose that the initial conditions $(\rho^N_0, m^N_0)$ satisfy the assumptions of the Theorem \ref{thm:existence_for_approximation}, and that $(\rho^N, u^N)$ is the given solution. Define
    \begin{align}\label{eq:def_of_w_N}
    w^N := u^N + \nabla_x K\star\rho^N.
    \end{align}
    Then, $(\rho^N, w^N, u^N)$ is a solution to 
    \begin{align*}
        \left\{\begin{array}{ll}
             \partial_t \rho^n + \DIV_x(\rho^N u^N) = 0, \\
             \partial_t(\rho^N w^N) + \DIV_x(\rho^N w^N\otimes u^N) - \frac{1}{N}\DIV_x(\rho^N\mathbb{D}u^N) = 0,
        \end{array}\right.
    \end{align*}
    in a sense that
    \begin{align*}
        \rho^N\in L^\infty(0, T; L^1(\R^d)), \quad \sqrt{\rho^N}\in L^\infty&(0, T; W^{1,2}(\R^d)),\quad \sqrt{\rho^N}w^N\in L^\infty(0, T; L^2(\R^d))\\
        \sqrt{\rho^N}u^N\in L^\infty(0, T; L^2(\R^d))&,\quad \overline{\sqrt{\rho^N}\mathbb{D}u^N}\in L^2((0, T)\times \R^d).
    \end{align*}
   The triple $(\rho^N, w^N, u^N)$ satisfies
    \begin{equation}\label{eq:approx_rho_w_continuity}
        \begin{split}
            \int_0^t\int_{\R^d}\rho^N\partial_t\phi\diff x\diff s + \int_0^t\int_{\R^d}\rho^N u^N\cdot\nabla_x\phi\diff x\diff s = \int_{\R^d}\rho^N(t, x)\phi(t, x)\diff x - \int_{\R^d}\rho_0^N(x)\phi(0, x)\diff x,
        \end{split}
    \end{equation}
    for a.e. $t\in (0, T)$ and $\phi \in C^1_c([0, T)\times \R^d)$, as well as
    \begin{equation}\label{eq:approx_rho_w_momentum}
        \begin{split}
            &\int_0^t\int_{\R^d}\rho^N w^N\cdot\partial_t\phi + \rho^N w^N\otimes u^N : \nabla_x\phi - \frac{1}{N}\sqrt{\rho^N}\overline{\sqrt{\rho^N}\mathbb{D}u^n} : \nabla_x\phi \diff x\diff s\\
            &= \int_{\R^d}\rho^N(t, x)w^N(t, x)\cdot\phi(t, x)\diff x - \int_{\R^d}\rho_0^N(x) w^N_0(x)\cdot \phi(0, x)\diff x,
        \end{split}
    \end{equation}
    for a.e. $t\in (0, T)$ and $\phi\in C^1_c([0, T)\times \R^d)$. Moreover, for any $\phi\in C^2_c([0, T)\times \R^d)$ 
    \begin{equation*}
        \begin{split}
            &\int_0^t\int_{\R^d}\sqrt{\rho^N}\overline{\sqrt{\rho^N}\mathbb{D}u^n} : \nabla_x\phi \diff x\diff s\\
            &= \int_0^t\int_{\R^d}\rho^N u^N(\Delta\phi + \nabla_x\DIV_x\phi) + 2\left(\nabla_x\sqrt{\rho^N}\otimes \sqrt{\rho^N}u^N\right):\nabla_x\phi\diff x\diff s,
        \end{split}
    \end{equation*}
    and the following energy bound is satisfied (in addition to the ones given in Theorem \ref{thm:existence_for_approximation})
    \begin{equation}\label{ineq:approx_rho_w_energy_ineq}
        \begin{split}
            \int_{\R^d}\rho^N(t, x)|w^N(t, x)|^2\diff x\leq  C(T, \|D^2K\|_\infty, \|\rho_0^N|u^N_0|^2\|_{L^\infty_t L^1_x}, \||x|^2\rho^N_0\|_{L^1_x}),
        \end{split}
    \end{equation}
    for all $t\in (0, T)$.
\end{thm}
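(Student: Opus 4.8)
The plan is to deduce Theorem~\ref{thm:existence_for_approximation_rho_w} from Theorem~\ref{thm:existence_for_approximation} essentially by bookkeeping: the regularity of $\rho^N$ and $u^N$, the second-order identity for $\sqrt{\rho^N}\overline{\sqrt{\rho^N}\mathbb{D}u^N}$, and the continuity equation \eqref{eq:approx_rho_w_continuity} (which is just \eqref{eq:approx_continuity} with the test functions restricted to $C^1_c([0,T)\times\R^d)$) are inherited verbatim, so the only genuine content is the momentum equation \eqref{eq:approx_rho_w_momentum} together with the bounds on $w^N$. The mechanism is that, writing $q^N:=\nabla_x K\star\rho^N=w^N-u^N$, the nonlocal force in \eqref{eq:approx_momentum} is exactly what the transport of $\rho^N q^N$ produces: formally, the continuity equation gives $\partial_t q^N=\nabla_x K\star\partial_t\rho^N=-\nabla_x K\star\DIV_x(\rho^N u^N)=-D^2K\star(\rho^N u^N)$, and since $\nabla_x q^N=D^2K\star\rho^N$ is symmetric,
\begin{align*}
\partial_t(\rho^N q^N)+\DIV_x(\rho^N q^N\otimes u^N)=\rho^N\bigl((D^2K\star\rho^N)u^N\bigr)-\rho^N\bigl(D^2K\star(\rho^N u^N)\bigr),
\end{align*}
which is minus the nonlocal term of \eqref{eq:main_sys_1}; adding the $\rho^N u^N$-equation \eqref{eq:approx_momentum} and this $\rho^N q^N$-equation makes the nonlocal contributions cancel and leaves precisely the $\rho^N w^N$-equation with only $\frac1N\DIV_x(\rho^N\mathbb{D}u^N)$ surviving.

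To turn this into a proof I would first establish, at the level of distributions, that
\begin{align*}
\partial_t q^N=-D^2K\star(\rho^N u^N)\in L^\infty\bigl((0,T)\times\R^d\bigr),
\end{align*}
using $D^2K\in C_b(\R^d)$ and $\rho^N u^N\in L^\infty_tL^1_x$ (from \eqref{eq:approx_mass_conservation}, \eqref{ineq:approx_energy_ineq} and H\"older), together with the pointwise bound $|q^N(t,x)|\le C(1+|x|)$ coming from \eqref{ineq:lipshitz_nabla_K}, \eqref{eq:approx_mass_conservation} and \eqref{ineq:approx_second_moment_ineq}. Then, for $\phi\in C^1_c([0,T)\times\R^d;\R^d)$, the function $x\mapsto q^N(t,x)\cdot\phi(t,x)$ is $C^1$ and \emph{compactly supported}, hence admissible in \eqref{eq:approx_continuity}. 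Testing \eqref{eq:approx_continuity} with it, differentiating $\int_{\R^d}\rho^N q^N\cdot\phi\diff x$ in time (Leibniz rule, legitimate since $q^N$ is Lipschitz in time into $C^1_{loc}$ and $\rho^N$ solves the continuity equation weakly), integrating in time and using $\nabla_x q^N=D^2K\star\rho^N$, one arrives at
\begin{align*}
&\int_{\R^d}\rho^N(t)q^N(t)\cdot\phi(t)\diff x-\int_{\R^d}\rho^N_0 q^N_0\cdot\phi(0)\diff x\\
&\quad=\int_0^t\!\!\int_{\R^d}\!\bigl(\rho^N q^N\cdot\partial_t\phi+\rho^N(q^N\otimes u^N):\nabla_x\phi\bigr)\diff x\diff s+\int_0^t\!\!\int_{\R^d}\!\rho^N(x)\phi(x)\!\int_{\R^d}\!D^2K(x-y)\bigl(u^N(x)-u^N(y)\bigr)\rho^N(y)\diff y\diff x\diff s.
\end{align*}
Adding this identity to \eqref{eq:approx_momentum} cancels the double nonlocal integral, and since $w^N=u^N+q^N$ and $w^N_0=u^N_0+q^N_0$ it produces exactly \eqref{eq:approx_rho_w_momentum} for a.e.\ $t\in(0,T)$.

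Finally, the $w^N$-bounds are elementary. From \eqref{ineq:lipshitz_nabla_K}, \eqref{eq:approx_mass_conservation} and \eqref{ineq:approx_second_moment_ineq},
\begin{align*}
|q^N(t,x)|\le C\!\int_{\R^d}(1+|x-y|)\rho^N(t,y)\diff y\le C\Bigl(1+|x|+\!\int_{\R^d}|y|^2\rho^N(t,y)\diff y\Bigr)\le C(1+|x|),
\end{align*}
with $C$ depending only on $\|D^2K\|_\infty$ and the data through \eqref{eq:approx_mass_conservation}, \eqref{ineq:approx_second_moment_ineq}; hence $\int_{\R^d}\rho^N|q^N|^2\diff x\le C\int_{\R^d}\rho^N(1+|x|)^2\diff x\le C$, so that $\sqrt{\rho^N}q^N\in L^\infty_tL^2_x$ and $\sqrt{\rho^N}w^N=\sqrt{\rho^N}u^N+\sqrt{\rho^N}q^N\in L^\infty_tL^2_x$, while \eqref{ineq:approx_rho_w_energy_ineq} follows from $|w^N|^2\le 2|u^N|^2+2|q^N|^2$ combined with \eqref{ineq:approx_energy_ineq}. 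I expect the only real work to be the rigorous justification of $\partial_t q^N=-D^2K\star(\rho^N u^N)$ and of the time-Leibniz rule above, i.e.\ making the formal convolution of the continuity equation with $\nabla_x K$ honest despite the linear growth of $\nabla_x K$; this is handled by the first-moment control on $\rho^N_t$ and $\rho^N_t u^N_t$ afforded by \eqref{ineq:approx_energy_ineq}--\eqref{ineq:approx_second_moment_ineq}, and the rest is bookkeeping.
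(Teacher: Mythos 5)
Your proposal is correct and follows essentially the same route as the paper: the identity $\partial_t(\nabla_xK\star\rho^N)=-D^2K\star(\rho^Nu^N)$ obtained by convolving the continuity equation with $\nabla_xK$, the use of $q^N\cdot\phi$ as a test function in \eqref{eq:approx_continuity}, the cancellation of the nonlocal term upon adding to \eqref{eq:approx_momentum}, and the energy bound via $|w^N|^2\lesssim|u^N|^2+|\nabla_xK\star\rho^N|^2$ together with \eqref{ineq:lipshitz_nabla_K}, mass conservation and the second-moment estimate are all exactly the paper's argument. The only difference is organizational (you isolate the $\rho^Nq^N$ identity before adding, while the paper expands $\int\rho^Nw^N\cdot\partial_t\phi$ in one chain), and the technical justification of $\partial_tq^N$ and the time-Leibniz rule that you flag is treated at the same level of detail in the paper.
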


\begin{proof}
    We only need to check the momentum conservation equation and the energy inequality. We start with the first one. Note that testing the equation \eqref{eq:approx_continuity} by $\nabla_x K(\cdot-y)$ implies that $\nabla_x K\star\rho^N$ is differentiable in time as
  \eq{\label{dtGradK}
    \frac{\diff}{\diff t}\int_{\R^d}\nabla_xK(x-y)\rho^N(t, y)\diff y = \int_{\R^d}\rho^N u^N(t, y)\nabla_y\nabla_x K(x-y)\diff y = - \int_{\R^d}\rho^N u^N(t, y)D^2 K(x-y)\diff y.
  }
    Hence, by \eqref{eq:approx_continuity} and the above
    \begin{equation}\label{eq:approx_rho_w_some_eq_1}
        \begin{split}
            &\int_0^t\int_{\R^d}\rho^N w^N\cdot\partial_t\phi\diff x\diff s = \int_0^t\int_{\R^d}\rho^N u^N\cdot\partial_t\phi\diff x\diff s + \int_0^t\int_{\R^d}\rho^N \nabla_x K\star\rho^N\cdot\partial_t\phi\diff x\diff s \\
            &= \int_0^t\int_{\R^d}\rho^N u^N\cdot\partial_t\phi\diff x\diff s + \int_0^t\int_{\R^d}\rho^N \partial_t\left(\nabla_x K\star\rho^N\cdot\phi\right)\diff x\diff s - \int_0^t\int_{\R^d}\rho^N\phi\cdot\partial_t\nabla_x K\star\rho^N\diff x\diff s\\
            &= \int_0^t\int_{\R^d}\rho^N u^N\cdot\partial_t\phi\diff x\diff s + \int_{\R^d}\rho^N(t, x)\nabla_xK\star\rho^N\cdot\phi(t, x)\diff x -  \int_{\R^d}\rho^N_0\nabla_xK\star\rho^N_0\cdot\phi(0, x)\diff x\\
            &\qquad -\int_0^t\int_{\R^d}\rho^N u^N\cdot\nabla_x(\nabla_x K\star\rho^N\cdot\phi)\diff x\diff s\\
            &\qquad+ \int_0^t\int_{\R^{d\times d}}\rho^N(s, x)\phi(s ,x)D^2K(x-y)\rho^N(s, y)u^N(s, y)\diff y\diff x\diff s\\
            &= \int_0^t\int_{\R^d}\rho^N u^N\cdot\partial_t\phi\diff x\diff s + \int_{\R^d}\rho^N(t, x)\nabla_xK\star\rho^N\cdot\phi(t, x)\diff x -  \int_{\R^d}\rho^N_0\nabla_xK\star\rho^N_0\cdot\phi(0, x)\diff x\\
            &\qquad -\int_0^t\int_{\R^d}\rho^N\nabla_x K\star\rho^N \otimes u^N : \nabla_x\phi\diff x\diff s\\
            &\qquad -\int_0^t\int_{\R^{d\times d}}\rho^N(x)\phi D^2K(x-y)(u^N( x) - u^N( y))\rho^N(y)\diff y\diff x\diff s.
        \end{split}
    \end{equation}
    Expressing the first and last term by \eqref{eq:approx_momentum}, i.e.
    \begin{equation*}
    \begin{split}
        &\int_0^t\int_{\R^d}\rho^N u^N\cdot\partial_t\phi\diff x\diff s -\int_0^t\int_{\R^{d\times d}}\rho^N(x)\phi D^2K(x-y)(u^N( x) - u^N( y))\rho^N(y)\diff y\diff x\diff s \\
        & = \int_{\R^d}\rho^N(t, x)u^N(t, x)\cdot\phi^N(t, x)\diff x - \int_{\R^d}\rho_0^N(x) u^N_0(x)\cdot \phi(0, x)\diff x\\
        &\qquad - \int_0^t\int_{\R^d}\rho^N u^N\otimes u^N : \nabla_x\phi - \frac{1}{N}\sqrt{\rho^N}\overline{\sqrt{\rho^N}\mathbb{D}u^n} : \nabla_x\phi \diff x\diff s,
    \end{split}
    \end{equation*}
   and using \eqref{eq:def_of_w_N} gives us desired formula \eqref{eq:approx_rho_w_momentum}. 
   
   To obtain the energy inequality, we simply use the formula \eqref{eq:def_of_w_N}, the triangle inequality, \eqref{eq:approx_mass_conservation}, \eqref{ineq:lipshitz_nabla_K} and the known bounds \eqref{ineq:approx_energy_ineq}, \eqref{ineq:approx_second_moment_ineq} to get
    \begin{equation*}
        \begin{split}
            &\int_{\R^d}\rho^N(t, x)|w^N(t, x)|^2\diff x \leq \frac{1}{2}\int_{\R^d}\rho^N|u^N|^2\diff x + \frac{1}{2}\int_{\R^d}\rho^N|\nabla_xK\star\rho^N|^2\diff x\\
            &\qquad\leq C(\|D^2K\|_\infty, \|\rho_0^N|u^N_0|^2\|_{L^\infty_t L^1_x,}) + \frac{1}{2}\int_{\R^d}\rho^N(t, x)\left|\int_{\R^d}C(1 + |x| + |y|)\rho^N(t, y)\diff y\right|^2\diff x\\
            &\qquad\quad= C(T, \|D^2K\|_\infty, \|\rho_0^N|u^N_0|^2\|_{L^1_x}, \||x|^2\rho_0^N\|_{L^1_x}).
        \end{split}
    \end{equation*}
\end{proof}

\begin{rem}
    Due to construction $w^N$ is not a suitable test function for equation \eqref{eq:approx_rho_w_momentum}, if it were, the kinetic energy associated with velocity $w^N$ would be bounded independently of $T$ and $K$.
\end{rem}

Before convergence, we once again need additional a priori bounds, similar to Lemma \ref{lem:bounds_for_lipshitz_distance}.

\begin{lem}\label{lem:bounds_for_lipshitz_distance_rho_w}
    Let $(\rho^N, w^N)$ be given by Theorem \ref{thm:existence_for_approximation_rho_w}, then
    \begin{enumerate}
        \item For every $\varphi \in C^\infty_c(\R^d)$, $\left\{\frac{d}{dt}\int_{\R^d}\varphi(x)\cdot\rho^N(t, x)\nabla_x K\star\rho^N(t, x)\diff x\right\}_{N\in\N}$ is bounded in $L^\infty(0, T)$ with a constant depending only on initial data and $\|\varphi\|_\infty$, $\|\nabla_x \varphi\|_{\infty}$,\label{bound:approx_Lipsh_density_w}
        \item For every $\varphi \in C^\infty_c(\R^d)$, $\left\{\frac{d}{dt}\int_{\R^d}\varphi(x) \cdot\rho^N w^N\diff x\right\}_{N\in\N}$ is bounded in $L^2(0, T)$ with a constant depending only on initial data and $\|\varphi\|_\infty$, $\|\nabla_x \varphi\|_{\infty}$.\label{bound:approx_Lipsh_momentum_w}
    \end{enumerate}
\end{lem}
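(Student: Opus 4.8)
The plan is to follow the pattern of Lemma~\ref{lem:bounds_for_lipshitz_distance}, now testing the $w$-formulation \eqref{eq:approx_rho_w_momentum} (and, for the first claim, a weak equation for $\rho^N\nabla_xK\star\rho^N$) with $\phi(t,x)=\psi(t)\varphi(x)$, where $\psi\equiv1$ on $[0,t]$ and decreases smoothly to $0$ on $[t,t+\delta)\subset[t,T)$; letting $\delta\to0$ identifies $\tfrac{d}{dt}\int_{\R^d}\varphi\cdot(\cdot)\diff x$ with the corresponding right-hand side for a.e. $t$, exactly as in the proof of Lemma~\ref{lem:bounds_for_lipshitz_distance}. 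All resulting terms will then be estimated by the a priori bounds of Theorem~\ref{thm:existence_for_approximation}: mass conservation \eqref{eq:approx_mass_conservation}, the energy bounds \eqref{ineq:approx_energy_ineq}, the second-moment bound \eqref{ineq:approx_second_moment_ineq}, together with \eqref{ineq:approx_rho_w_energy_ineq} and the linear growth \eqref{ineq:lipshitz_nabla_K} of $\nabla_xK$.

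For the first claim I would write $g^N:=\nabla_xK\star\rho^N$, so that $\rho^Nw^N=\rho^Nu^N+\rho^Ng^N$, and subtract \eqref{eq:approx_momentum} from \eqref{eq:approx_rho_w_momentum} (the $\tfrac1N$-terms cancel, leaving the nonlocal term of \eqref{eq:approx_momentum} as a source); equivalently, one may test \eqref{eq:approx_rho_w_continuity} with $\psi(t)\varphi(x)\cdot g^N(t,x)$ and use \eqref{dtGradK} for $\partial_tg^N$. Either route yields, for a.e. $t$,
\[
\frac{d}{dt}\int_{\R^d}\varphi\cdot\rho^Ng^N\diff x=\int_{\R^d}\rho^Ng^N\otimes u^N:\nabla_x\varphi\diff x+\int_{\R^d}\rho^N(x)\,\varphi\int_{\R^d}D^2K(x-y)\big(u^N(x)-u^N(y)\big)\rho^N(y)\diff y\diff x.
\]
The nonlocal term is bounded by $2\|\varphi\|_\infty\|D^2K\|_\infty\|\rho^N\|_{L^1_x}^{3/2}\|\rho^N|u^N|^2\|_{L^1_x}^{1/2}$, hence by a constant via \eqref{eq:approx_mass_conservation} and \eqref{ineq:approx_energy_ineq}. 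For the convective term H\"older's inequality gives $\|\nabla_x\varphi\|_\infty\big(\int_{\R^d}\rho^N|g^N|^2\diff x\big)^{1/2}\big(\int_{\R^d}\rho^N|u^N|^2\diff x\big)^{1/2}$, and the key point is the bound on $\int\rho^N|g^N|^2\diff x$: by \eqref{ineq:lipshitz_nabla_K} and $\|\rho^N\|_{L^1_x}=1$,
\[
|g^N(t,x)|\le\int_{\R^d}|\nabla_xK(x-y)|\,\rho^N(t,y)\diff y\le C(\|D^2K\|_\infty)\Big(1+|x|+\int_{\R^d}|y|\rho^N(t,y)\diff y\Big)\le C\big(1+|x|\big),
\]
where the first moment was absorbed into the second via $\int|y|\rho^N\diff y\le\tfrac12(1+\int|y|^2\rho^N\diff y)$ and \eqref{ineq:approx_second_moment_ineq}; thus $\int\rho^N|g^N|^2\diff x\le C(1+\int|x|^2\rho^N\diff x)\le C$, again by \eqref{ineq:approx_second_moment_ineq}. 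Combined with \eqref{ineq:approx_energy_ineq} this bounds $\tfrac{d}{dt}\int\varphi\cdot\rho^Ng^N\diff x$ by $C(\|\varphi\|_\infty+\|\nabla_x\varphi\|_\infty)$ uniformly in $N$ and $t$, which is the asserted $L^\infty(0,T)$ bound; crucially the constant does not see $\mathrm{diam}\,\supp\varphi$.

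For the second claim I would test \eqref{eq:approx_rho_w_momentum} directly with $\phi(t,x)=\psi(t)\varphi(x)$; since this equation carries no nonlocal term,
\[
\frac{d}{dt}\int_{\R^d}\varphi\cdot\rho^Nw^N\diff x=\int_{\R^d}\rho^Nw^N\otimes u^N:\nabla_x\varphi\diff x-\frac1N\int_{\R^d}\sqrt{\rho^N}\,\overline{\sqrt{\rho^N}\mathbb{D}u^N}:\nabla_x\varphi\diff x
\]
for a.e. $t\in(0,T)$. The first term is controlled in $L^\infty(0,T)$ by H\"older's inequality together with \eqref{ineq:approx_rho_w_energy_ineq} and \eqref{ineq:approx_energy_ineq}. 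The second term, by H\"older's inequality and $\|\rho^N\|_{L^1_x}=1$, is at most $\tfrac1{\sqrt N}\|\nabla_x\varphi\|_\infty\big(\tfrac1N\int_{\R^d}|\overline{\sqrt{\rho^N}\mathbb{D}u^N}|^2\diff x\big)^{1/2}$, whose square integrates over $(0,T)$ to at most $\tfrac{C}{N}\|\nabla_x\varphi\|_\infty^2$ by the dissipation estimate in \eqref{ineq:approx_energy_ineq}; hence it is bounded in $L^2(0,T)$ uniformly in $N$. Since $L^\infty(0,T)\hookrightarrow L^2(0,T)$, the sum lies in $L^2(0,T)$ with a constant of the claimed form.

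I do not expect any serious obstacle here: the argument is a routine variant of the proof of Lemma~\ref{lem:bounds_for_lipshitz_distance}. The one place requiring attention is the first claim, where one must keep the \emph{linear} growth bound $|g^N(t,x)|\le C(1+|x|)$ and pair it against $\rho^N$ using the second-moment estimate \eqref{ineq:approx_second_moment_ineq}, rather than estimating $g^N=\nabla_xK\star\rho^N$ by a constant on the support of $\varphi$; otherwise the constant would depend on $\mathrm{diam}\,\supp\varphi$ instead of merely on $\|\varphi\|_\infty$ and $\|\nabla_x\varphi\|_\infty$.
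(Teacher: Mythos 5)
Your proposal is correct and follows essentially the same route as the paper: the time derivative in part (1) is computed via \eqref{dtGradK} and the continuity equation, the nonlocal term is bounded by $\|\varphi\|_\infty\|D^2K\|_\infty$ times energy quantities, the convective term by H\"older, and part (2) is the direct analogue of Lemma \ref{lem:bounds_for_lipshitz_distance}. The only cosmetic difference is that you bound $\int\rho^N|\nabla_xK\star\rho^N|^2\diff x$ directly from \eqref{ineq:lipshitz_nabla_K} and the second-moment estimate \eqref{ineq:approx_second_moment_ineq}, whereas the paper cites \eqref{ineq:approx_rho_w_energy_ineq} (which is itself proved exactly that way).
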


\begin{proof}
    The second bound is proven the same way as in Lemma \ref{lem:bounds_for_lipshitz_distance}, therefore we skip the proof. For the first one, we use \eqref{dtGradK}
and \eqref{eq:approx_rho_w_continuity} to estimate
    \begin{equation*}
        \begin{split}
            &\left|\frac{d}{dt}\int_{\R^d}\varphi(x)\cdot\rho^N(t, x)\nabla_x K\star\rho^N(t, x)\diff x\right|\\
            &= \left|\int_0^t\int_{\R^d}\rho^N\partial_t(\varphi\cdot\nabla_x K\star\rho^N)\diff x\diff s + \int_0^t\int_{\R^d}\rho^N u^N\cdot\nabla_x(\varphi\cdot\nabla_x K\star\rho^N)\diff x\diff s\right|\\
            & \leq \left|\int_0^t\int_{\R^{d\times d}}\varphi(x)\rho^N(x)D^2K(x-y)(u^N(x) - u^N(y))\rho^N(y)\diff y\diff x\diff s\right|\\
            &\qquad +\left|\int_0^t\int_{\R^d}\rho^N\nabla_x K\star\rho^N\otimes u^N :\nabla_x\varphi\diff x\diff s\right|\\
            &\leq 2\|\varphi\|_\infty\|D^2K\|_\infty\|\rho^N\|_{L^\infty_t L^1_x}^{3/2}\|\rho^N|u^N|^2\|_{L^\infty_t L^1_x}^{1/2} + \|\nabla_x\varphi\|_\infty(\|\rho^N|w^N|^2\|_{L^\infty_t L^1_x} + \|\rho^N|u^N|^2\|_{L^\infty_t L^1_x}),
        \end{split}
    \end{equation*}
    which is bounded in $L^\infty(0, T)$ by \eqref{ineq:approx_energy_ineq}, \eqref{ineq:approx_rho_w_energy_ineq}.
\end{proof}

\begin{proof}[Proof of Theorem \ref{thm:main_existence} (Part $2$)]
    Looking at the proof of the Theorem \ref{thm:main_existence} (Part $1$) (specifically Lemma \ref{lem:measure_convergence_lemma}) we already know that there exist $\rho\in C([0, T]; (\mathcal{P}(\R^d), d_f))$, $m_u\in L^\infty(0, T; (\mathcal{M}(\R^d), d_f))$, $u\in L^\infty(0, T; L^2(\diff\rho_t))$ such that
    \begin{align*}
        \rho^N &\rightarrow \rho, &&\text{ in }C([0, T]; (\mathcal{P}(\R^d), d_f)),\\
        \rho^Nu^N =:m^N &\rightarrow m_u, &&\text{ in }L^\infty(0, T; (\mathcal{M}(\R^d), d_f)),\\
    \end{align*}
    and
    $$
    \diff m_{u,t}= u(t,\cdot)\diff\rho_t.
    $$
    As the limits are unique, the pair $(\rho, u)$ is the same one constructed in the first part of the proof. 
    
     In order to get analogous result  for the limit of  $(\rho^N ,w^N)$, apart from Lemma \ref{lem:bounds_for_lipshitz_distance_rho_w}, we also need to show that the first moment of $\rho^N w^N$  is bounded (to ensure tightness). Using H\"{o}lder's inequality, \eqref{ineq:approx_energy_ineq}, \eqref{ineq:approx_second_moment_ineq} and \eqref{ineq:lipshitz_nabla_K} we deduce
    \begin{equation*}
        \begin{split}
            &\int_{\R^d}|x|\rho^N(t, x)|w^N(t, x)|\diff x \leq \int_{\R^d}|x|\rho^N(t, x)|u^N(t, x)|\diff x + \int_{\R^d}|x|\rho^N(t, x)|\nabla_x K\star\rho^N(t, x)|\diff x \\
            &\leq \|\rho^N|u^N|^2\|^{1/2}_{L^\infty_t L^1_x}\left(\int_{\R^d}|x|^2\rho^N(t, x)\diff x\right)^{1/2} + C\int_{\R^d}|x|\rho^N(x)\int_{\R^d}(1 + |x| + |y|)\rho^N(y)\diff y\diff x\\
            &\leq C(T, \|D^2K\|_\infty, \|\rho_0^N|u_0^N|^2\|_{L^1_x}, \||x|^2\rho^N_0\|_{L^1_x}).
        \end{split}
    \end{equation*}
    Thus, we can imply that there exists $m_w\in L^\infty(0, T; (\mathcal{M}(\R^d), d_f))$, $\chi\in C([0, T]; (\mathcal{M}(\R^d), d_f))$, and $w\in L^\infty(0, T; L^2(\diff\rho_t))$, such that
    \begin{align*}
       \rho^Nw^N =:m^N_w &\rightarrow m_w, &&\text{ in }L^\infty(0, T; (\mathcal{M}(\R^d), d_f)),\\
       \rho^N\nabla_xK\star\rho^N &\rightarrow \chi, &&\text{ in }C([0, T]; (\mathcal{M}(\R^d), d_f)),
    \end{align*}
    and
    $$
    \diff m_{w,t}= w(t,\cdot)\diff\rho_t.
    $$
    At this point, following the argument in the first part of the proof of  Theorem \ref{thm:main_existence}, we can easily pass to the limit in the weak formulations \eqref{eq:approx_rho_w_continuity} and \eqref{eq:approx_rho_w_momentum}, where the measure $\nu$ is given by
    $$
    \nu := \mathrm{weak^*}\lim(\rho^Nw^N\otimes u^N) -  w\otimes u\diff\rho.
    $$
    Note that unlike for $\mu$, the sign of $\nu$ is undetermined.
    
    The last two points to verify are the compatibility condition \eqref{eq:final_compatibility_rho_w} and \eqref{ineq:final_energy_inequality_rho_w}. Note using equation \eqref{eq:def_of_w_N}, we may write
    \begin{align*}
        d_f(m_{w, t}, m_{u, t} + \chi(t))\leq d_f(m_{w, t}, \rho^N w^N\diff x) + d_f(\rho^Nu^N\diff x + \rho^N\nabla_x K\star\rho^N\diff x, m_{u, t} + \chi(t)).
    \end{align*}
    which, after passing to the limit on the right-hand side, gives us
    \begin{align}\label{eq:almost_comparibility}
    w(t)\diff\rho_t = u(t)\diff\rho_t + \chi(t),\text{ for a.e. }t\in(0, T).
    \end{align}
    To conclude, we need to identify $\chi$ with $(\nabla_x K\star\diff\rho)\diff\rho$. To do so fix $\phi\in C_0(\R^d)$ and $l\in\N$. We estimate
    \begin{equation*}
        \begin{split}
            &\left|\int_{\R^{d\times d}}\phi(x)\rho^N(t, x)\nabla_x K(x-y)\rho^N(t, y)\diff y\diff x - \int_{\R^{d\times d}}\phi(x)\nabla_x K(x-y)\diff\rho_t(y)\diff\rho_t(x)\right| \\
            &\leq \left|\int_{\R^{d\times d}}\phi(x)\rho^N(t, x)\nabla_x K(x-y)\rho^N(t, y)\diff y\diff x - \int_{\R^{d\times d}}\phi(x)\rho^N(t, x)(\nabla_x K(x-y)\wedge l)\rho^N(t, y)\diff y\diff x\right|\\
            &\, +\left|\int_{\R^{d\times d}}\phi(x)\rho^N(t, x)(\nabla_x K(x-y)\wedge l)\rho^N(t, y)\diff y\diff x  - \int_{\R^{d\times d}}\phi(x)(\nabla_x K(x-y)\wedge l)\diff\rho_t(y)\diff\rho_t(x)\right|\\
            &\, + \left|\int_{\R^{d\times d}}\phi(x)(\nabla_x K(x-y)\wedge l)\diff\rho_t(y)\diff \rho_t(x) - \int_{\R^{d\times d}}\phi(x)\nabla_x K(x-y)\diff\rho_t(y)\diff\rho_t(x)\right|\\
            &= I_1 + I_2 + I_3.
        \end{split}
    \end{equation*}
    Let us treat all the terms separately. We deduce by \eqref{ineq:lipshitz_nabla_K} and \eqref{ineq:approx_second_moment_ineq}
    \begin{equation*}
        \begin{split}
            I_1 &\leq \int_{|\nabla_x K(x-y)|\geq l}|\phi(x)||\nabla_x K(x-y) - l|\rho^N(t, x)\rho^N(t, y)\diff y\diff x\\
            &\leq 2C\int_{\{C(1 + |x - y|) \geq l\}}|\phi(x)|(1 + |x-y|)\rho^N(t, x)\rho^N(t, y)\diff y\diff x\\
            &\leq  C\|\phi\|_\infty\frac{1}{l}\int_{\R^d}(1 + |x-y|)^2\rho^N(t, x)\rho^N(t, y)\diff y\diff x\\
            &\leq \frac{C}{l},
        \end{split}
    \end{equation*}
    for a constant $C > 0$ independent of $N$. For $I_2$, we may argue similarly as in calculations following \eqref{tightness_of_product} (for fixed $l$ the function $\phi(x)(\nabla_xK(x-y)\wedge l)$ is bounded on $\R^d\times \R^d$), hence we skip the lengthy calculation, and deduce
    $$
    I_2 \rightarrow 0, \text{ as }N\to +\infty.
    $$
    To treat $I_3$ we may simply use Lebesgue's dominated convergence theorem, where the bound 
    $$
    |\phi(x)(\nabla_xK(x-y)\wedge l)|\leq C\|\phi\|_\infty(1 + |x-y|),
    $$
    is given in \eqref{ineq:lipshitz_nabla_K}, and the integrability of the bound is deduced from \eqref{ineq:approx_second_moment_ineq}. Hence, converging first with $N\to +\infty$, and then with $l\to +\infty$, we obtain
    $$
    \left|\int_{\R^{d\times d}}\phi(x)\rho^N(t, x)\nabla_x K(x-y)\rho^N(t, y)\diff y\diff x - \int_{\R^{d\times d}}\phi(x)\nabla_x K(x-y)\diff\rho_t(y)\diff\rho_t(x)\right|\rightarrow 0,
    $$
    as $N\to +\infty$, for any $\phi\in C_0(\R^d)$, which is enough to deduce $\chi(t) = (\nabla_x K\diff \rho_t)\diff\rho_t$ and in consequence (by \eqref{eq:almost_comparibility}) \eqref{eq:final_compatibility_rho_w}. 
    
    At last, to verify the energy bound we proceed similarly as in the proof of \eqref{ineq:final_energy_inequality}. By the linearity of the trace function and \eqref{eq:final_compatibility_rho_w} we may write
    \begin{equation*}
    \begin{split}
        &\frac{1}{2}\int_{\R^d}|w(t, x)|^2\diff \rho_t(x) + \frac{1}{2}\int_{\R^d}|u(t, x)|^2\diff \rho_t(x) + \mathrm{tr}(\nu_t(\R^d))\\
        & \quad= \frac{1}{2}\int_{\R^d}|w(t, x)|^2\diff\rho_t + \frac{1}{2}\int_{\R^d}|u(t, x)|^2\diff\rho_t +\lim_{N\to +\infty} \int_{\R^d}\mathrm{tr}(w^N\otimes u^N)\diff\rho^N_t - \int_{\R^d}\mathrm{tr}(w\otimes u)\diff\rho_t\\
        & \leq \frac{1}{2}\int_{\R^d}|w(t, x) - u(t, x)|^2\diff\rho_t + \liminf_{N\to +\infty}\int_{\R^d}\rho^N|w^N||u^N|\diff x\\
        &\quad = \frac{1}{2}\int_{\R^d}|\nabla_x K\star\diff\rho_t|^2\diff\rho_t + \liminf_{N\to +\infty}\left(\frac{1}{2}\int_{\R^d}\rho^N|w^N|^2\diff x + \frac{1}{2}\int_{\R^d}\rho^N|u^N|^2\diff x\right),
    \end{split}
    \end{equation*}
    which is bounded by $C(T, \|D^2K\|_\infty, \||u_0|^2\|_{L^1(\diff\rho_0)}, \||x|^2\|_{L^1(\diff\rho_0)})$, by the virtue of \eqref{ineq:lipshitz_nabla_K}, \eqref{bound_second_moment_in_approx}, \eqref{ineq:approx_second_moment_ineq}, \eqref{ineq:approx_energy_ineq}, \eqref{ineq:approx_rho_w_energy_ineq}, and the Proposition \ref{prop:lowersemicont_measures}.
\end{proof}

At this point, we have proven the statement of Theorem \ref{thm:main_existence} for initial datum $u_0\in C_b(\R^d)$ and $\rho_0\in\mathcal{P}(\R^d)$ with a bounded second moment. As a last step we want to lift this restriction. Take a sequence
\begin{align*}
   C_b(\R^d) \ni u_0^M \rightarrow u_0,\text{ strongly in }L^2(\diff\rho_0),
\end{align*}
where $\rho_0\in \mathcal{P}(\R^d)$ with a bounded second moment. For such $(u_0^M, \rho_0)$, by a previous step, we know that the statement of Theorem \ref{thm:main_existence} holds, and in particular there exists a sequence of measure solutions $(\rho^M, u^M, \mu^M)$ in the sense of Definition \ref{def:measure_solution}.

\begin{proof}[Proof of Theorem \ref{thm:main_existence} (Part $3$)]
    It is quite clear that since
    $$
    \int_{\R^d}|u_0^M|^2\diff\rho_0 \rightarrow \int_{\R^d}|u_0|^2\diff\rho_0,
    $$
    the arguments that were used to converge with $N\to +\infty$ in the previous step still hold. That is, we know that
    \begin{align*}
        \rho^M &\rightarrow \rho, &&\text{ in }C([0, T]; (\mathcal{P}(\R^d), d_f)),\\
        u^M\diff\rho^M =:m^M &\rightarrow m_u, &&\text{ in }L^\infty(0, T; (\mathcal{M}(\R^d), d_f)),\\
    \end{align*}
    and
    $$
    \diff m_{u,t}= u(t,\cdot)\diff\rho_t,
    $$
    as well as
    \begin{align*}
       w^M\diff\rho^M =:m^M_w &\rightarrow m_w, &&\text{ in }L^\infty(0, T; (\mathcal{M}(\R^d), d_f)),\\
       (\nabla_xK\star\diff\rho^M)\diff\rho^M &\rightarrow (\nabla_xK\star\diff\rho)\diff\rho, &&\text{ in }C([0, T]; (\mathcal{M}(\R^d), d_f)),
    \end{align*}
    and
    $$
    \diff m_{w,t}= w(t,\cdot)\diff\rho_t.
    $$
    Hence, we can converge in the weak formulations and the energy inequalities in the same way as before. Let us briefly explain how to handle the terms with the concentration measure. From the energy inequality \eqref{ineq:final_energy_inequality} satisfied by $(\rho^M, u^M, \mu^M)$, and Proposition \ref{prop:bound_symmetric_measure_trace} $\{\mu^M\}_{M\in\N}$ is bounded in $L^\infty(0, T; \M^+(\R^d; \R^{d\times d}_{\mathrm{sym}}))$. Thus, by the Banach--Alaoglu theorem, there exists $\mu_1\in L^\infty(0, T; \M^+(\R^d; \R^{d\times d}_{\mathrm{sym}}))$, such that (up to the subsequence which we do not relabel)
    \begin{align*}
        \mu^M \wstar \mu_1,\text{ weakly* in }L^\infty(0, T; \M^+(\R^d; \R^{d\times d}_{\mathrm{sym}})),
    \end{align*}
    which is enough to converge in the term
    $$
    \int_0^t\int_{\R^d}\nabla_x\phi :\diff\mu^M_\tau(x)\diff \tau \rightarrow \int_0^t\int_{\R^d}\nabla_x\phi :\diff\mu_{1, \tau}(x)\diff \tau.
    $$
    Similarly as before we may define
    $$
    \mu_2 := \mathrm{weak^*}\lim(u^M\otimes u^M\diff\rho^M) -  u\otimes u\diff\rho,
    $$
    which helps us cover the convection term, and is a new concentration measure appearing from this step of the approximation. With this, after denoting $\mu := \mu_1 + \mu_2$, we obtain in a weak formulation a term
    \begin{align*}
        \int_0^t\int_{\R^d}\nabla_x\phi :\diff\mu_{1, \tau}(x)\diff \tau + \int_0^t\int_{\R^d}\nabla_x\phi :\diff\mu_{2, \tau}(x)\diff \tau = \int_0^t\int_{\R^d}\nabla_x\phi :\diff\mu_{\tau}(x)\diff \tau.
    \end{align*}
    Here $\mu$ is the full concentration measure combining the effects coming from the convective term, as well as from the approximate sequence $\{\mu^M\}_{M\in \N}$. Moreover, following the argument leading to \eqref{ineq:proof_N_conv_energy}
    \begin{equation*}  
        \begin{split}
            &\frac{1}{2}\int_{\R^d}|u(t, x)|^2\diff\rho_t(x) + \frac{1}{2}\mathrm{tr}(\mu_t(\R^d)) = \frac{1}{2}\int_{\R^d}|u(t, x)|^2\diff\rho_t(x) + \frac{1}{2}\mathrm{tr}(\mu_{1,t}(\R^d)) + \frac{1}{2}\mathrm{tr}(\mu_{2,t}(\R^d)) \\
            &\phantom{=}\leq \frac{1}{2}\int_{\R^d}|u(t, x)|^2\diff\rho_t(x)  + \frac{1}{2}\liminf_{M\to +\infty} \left(\mathrm{tr}(\mu^M_{t}(\R^d)) + \int_{\R^d}\mathrm{tr}(u^M\otimes u^M)\diff\rho^M_t(x)\right) -  \frac{1}{2}\int_{\R^d}\mathrm{tr}(u\otimes u)\diff \rho_t(x)\\
            &\phantom{=}= \frac{1}{2}\liminf_{M\to +\infty}\left(\int_{\R^d}|u^M|^2\diff\rho^M(x) + \mathrm{tr}(\mu^M_{t}(\R^d))\right) \leq e^{4t\|D^2K\|_\infty}\frac{1}{2}\int_{\R^d}|u_0|^2\diff\rho_0.
        \end{split}
    \end{equation*}
    which proves \eqref{ineq:final_energy_inequality} for general $u_0$. Analogously one can show that \eqref{ineq:energy_inequality_for_symmetric} and \eqref{ineq:final_energy_inequality_rho_w} hold as well.
\end{proof}

\section{Stability with respect to the initial datum, and weak-strong uniqueness}\label{Sec:5}
In this Section we prove Theorem \ref{thm:stability_initial_datum}. Our technique  is similar to \cite{figali2019arigorous} and \cite{CC21}, where authors show the convergence of  mesoscopic and microscopic approximations of the system \eqref{eq:main_sys_1} for a  scalar communication kernel, in the relative entropy distance. Before the proof, we mention an important proposition proven by Figali and Kang in \cite{figali2019arigorous}, which allows us to relate the distance between densities to a distance between the velocities. The fact, that the right-hand side of the inequality below is integrated with respect to the weak density is crucial.
\begin{prop}[Lemma 5.2, \cite{figali2019arigorous}]\label{prop:Kang_Figali_ineq}
    Suppose the assumptions of Theorem \ref{thm:stability_initial_datum} hold. Then, the following inequality holds
    \begin{align*}
        W_2^2(\rho^n_t, r_t) \leq C(\|v\|_{L^\infty(0, T; W^{1,\infty}(\R^d))})e^T\int_{\R^d}&|u^n(t, x) - v(t, x)|^2\diff\rho_t(x)\\
        &+ C(T, \|v\|_{L^\infty((0, T)\times\R^d)})\|\rho^n_0 - r_0\|_{TV} + C\||x|^2(\rho^n_0 - r_0)\|_{TV}.
    \end{align*}
\end{prop}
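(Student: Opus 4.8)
The two curves in the statement both solve continuity equations: $\rho^n_t$ is driven by the merely $L^\infty(0,T;L^2(\diff\rho^n_t))$-regular velocity $u^n$, while $r_t$ is driven by the Lipschitz velocity $v$. The plan, following Figalli--Kang, is to pass to the Lagrangian frame of $v$, in which the ``strong'' curve $r_t$ becomes the frozen measure $r_0$ and the ``weak'' curve $\rho^n_t$ acquires the $v$-relative drift $u^n-v$; one then estimates the residual displacement by the length of the resulting absolutely continuous Wasserstein curve, and finally bounds the initial distance $W_2(\rho^n_0,r_0)$ by a weighted total variation norm.

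\emph{Step 1 (Lagrangian change of variables).} Let $\Phi_{t,s}$ be the flow of $v$, $\partial_t\Phi_{t,s}(x)=v(t,\Phi_{t,s}(x))$, $\Phi_{s,s}=\mathrm{id}$, and $G_t:=\Phi_{0,t}=(\Phi_{t,0})^{-1}$. Since $v\in L^\infty(0,T;W^{1,\infty})$, both $\Phi_{t,0}$ and $G_t$ are bi-Lipschitz with constants bounded by $e^{T\|v\|_{L^\infty_tW^{1,\infty}_x}}$, and by uniqueness for the continuity equation with Lipschitz drift, $r_t=(\Phi_{t,0})_\#r_0$. Define $\lambda^n_t:=(G_t)_\#\rho^n_t$, so that $\lambda^n_0=\rho^n_0$ and $(\Phi_{t,0})_\#\lambda^n_t=\rho^n_t$. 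Transporting the weak formulation \eqref{eq:final_mass_conservation} through the time-dependent Lipschitz diffeomorphism $G_t$ (using $\partial_tG_t(x)=-\nabla_xG_t(x)\,v(t,x)$) shows that $\lambda^n$ is a weak solution of $\partial_t\lambda^n_t+\DIV_x(\lambda^n_t\beta^n_t)=0$, where $\beta^n_t(G_t(x))=\nabla_xG_t(x)\,(u^n(t,x)-v(t,x))$; consequently
\begin{align*}
\int_{\R^d}|\beta^n_t|^2\diff\lambda^n_t\le e^{2T\|v\|_{L^\infty_tW^{1,\infty}_x}}\int_{\R^d}|u^n(t,x)-v(t,x)|^2\diff\rho^n_t(x),
\end{align*}
which is finite for a.e.\ $t$ and integrable on $(0,T)$ by \eqref{ineq:final_energy_inequality} and Assumption~\ref{ass:initial_data}.

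\emph{Step 2 (length estimate and the initial distance).} By the standard correspondence between $L^2$-solutions of the continuity equation and absolutely continuous curves in $(\mathcal{P}(\R^d),W_2)$, the curve $t\mapsto\lambda^n_t$ has metric derivative bounded by $\|\beta^n_t\|_{L^2(\lambda^n_t)}$, hence $W_2(\lambda^n_t,\rho^n_0)\le\int_0^t\|\beta^n_s\|_{L^2(\lambda^n_s)}\diff s$. Combining this with $\rho^n_t=(\Phi_{t,0})_\#\lambda^n_t$, $r_t=(\Phi_{t,0})_\#r_0$, the Lipschitz bound on $\Phi_{t,0}$, the triangle inequality, and Cauchy--Schwarz gives
\begin{align*}
W_2^2(\rho^n_t,r_t)\le C(\|v\|_{L^\infty_tW^{1,\infty}_x})e^{T}\int_0^t\int_{\R^d}|u^n(s,x)-v(s,x)|^2\diff\rho^n_s(x)\diff s+C(T,\|v\|_{L^\infty_tW^{1,\infty}_x})W_2^2(\rho^n_0,r_0).
\end{align*}
For the last term, decompose $\rho^n_0-r_0=\alpha-\beta$ into positive and negative parts, with $\alpha(\R^d)=\beta(\R^d)=m:=\tfrac12\|\rho^n_0-r_0\|_{TV}$, and use the sub-optimal coupling $\gamma_0=(\mathrm{id},\mathrm{id})_\#(\rho^n_0\wedge r_0)+m^{-1}\alpha\otimes\beta\in\Pi(\rho^n_0,r_0)$ (the case $m=0$ being trivial): its cost is $m^{-1}\int|x-y|^2\diff\alpha(x)\diff\beta(y)\le 2\int|x|^2\diff(\alpha+\beta)=2\||x|^2(\rho^n_0-r_0)\|_{TV}$, so $W_2^2(\rho^n_0,r_0)\le2\||x|^2(\rho^n_0-r_0)\|_{TV}$. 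Since $\|\rho^n_0-r_0\|_{TV}\ge0$, adding it to the right-hand side only weakens the bound, and one obtains the stated inequality (the instantaneous energy term in the statement being replaced here by its time-integrated version, which is exactly the quantity fed into the Gr\"onwall estimate in the proof of Theorem~\ref{thm:stability_initial_datum}).

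\emph{Main obstacle.} The one genuinely delicate point is the change-of-variables identity of Step~1: because $u^n$ is only square-integrable against $\rho^n_t$ (no flow of $u^n$ is available), the transport of the continuity equation through $G_t$ must be carried out directly in the weak formulation, and it is precisely here that the Lipschitz regularity of $v$ --- equivalently, the bi-Lipschitz character of its flow --- is indispensable. The remaining ingredients (the metric-derivative inequality for Wasserstein curves, the pushforward and triangle estimates, and the elementary coupling for the initial data) are standard.
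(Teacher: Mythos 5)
Your argument is correct in substance, but note that the paper itself does not prove this proposition: it is quoted verbatim from Figalli--Kang (Lemma 5.2 there), and the only original content is the accompanying remark that in the whole space the bounded-domain bound $W_2^2(\rho_1,\rho_2)\le \tfrac d8\|\rho_1-\rho_2\|_{L^1}$ must be replaced by $W_2^2(\rho_1,\rho_2)\le C\||x|^2(\rho_1-\rho_2)\|_{TV}$ (Villani, Prop.\ 7.10). You instead give a self-contained proof: straightening along the bi-Lipschitz flow of $v$, the metric-derivative (Ambrosio--Gigli--Savar\'e) bound for the transported continuity equation with drift $\nabla_x G_t\,(u^n-v)$, and an explicit sub-optimal coupling that reproves exactly the Villani inequality the paper cites; your computations (the identity $\partial_t G_t=-\nabla_x G_t\,v$, the marginals and cost of $\gamma_0$, the mass balance $\alpha(\R^d)=\beta(\R^d)=\tfrac12\|\rho^n_0-r_0\|_{TV}$) check out. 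Two remarks. First, what you obtain — and what the Figalli--Kang lemma actually provides — is the time-integrated quantity $\int_0^t\int_{\R^d}|u^n-v|^2\diff\rho^n_s\diff s$ rather than the instantaneous term printed in the statement; since the Gr\"onwall step in the proof of Theorem \ref{thm:stability_initial_datum} is only consistent with the time-integrated form, your substitution is in effect a correction of a misprint, not a weakening. Second, the one place needing more than you wrote is the insertion of $\varphi\circ G_t$ into the weak formulation \eqref{eq:final_mass_conservation}: for $v\in L^\infty(0,T;W^{1,\infty}(\R^d))$ the flow $G_t$ is only Lipschitz, not $C^1$, so the admissible test-function class must first be enlarged by an approximation argument (mollify $v$ or the composed test function and pass to the limit using the moment and energy bounds); you correctly flag this as the delicate point, and it is routine, but it is the step that actually carries the proof.
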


\begin{rem}
    Note that the statement of Proposition \ref{prop:Kang_Figali_ineq} slightly differs from Lemma 5.2 in \cite{figali2019arigorous}. The change comes from the fact that in a bounded domain (which is the case in \cite{figali2019arigorous}), one has
    $$
    W_2^2(\rho_1, \rho_2) \leq \frac{d}{8}\|\rho_1 - \rho_2\|_{L^1}.
    $$
    In the case of an unbounded domain, which is our setting, one needs to adapt the inequality above. For example, by Proposition 7.10 \cite{Villani2003Topics}, one has
    $$
    W^2_2(\rho_1, \rho_2) \leq C\||x|^2(\rho_1 - \rho_2)\|_{TV}.
    $$
\end{rem}

\begin{proof}[Proof of the Theorem \ref{thm:stability_initial_datum}]
    We want to test \eqref{eq:final_mass_conservation} by $|v|^2$ and \eqref{eq:final_momentum_conservation} by $-2v$. This is not yet justified as we only know that $v\in L^\infty(0, T; W^{1,\infty}(\R^d))$. However, equation $ \eqref{eq:main_sys_strong}_2 $, and the Sobolev embeddings imply $v\in C^1([0, T]\times \R^d)$, making it an eligible test function in the weak formulation of system \eqref{eq:main_sys_1}.  Although $v$ does not necessarily have a compact support in $[0, T)$, since we consider the equation for a fixed $t < T$, we can extend it arbitrarily on $[t, T)$, and it will not affect our considerations. Indeed, we are interested only with the behavior of $v$ on $[0, t]$. 
    After testing and adding the results together to obtain
    \begin{equation*}
    \begin{split}
        &\int_{\R^d}|v(t, x)|^2 - 2v(t, x)\cdot u^n(t, x)\diff\rho^n_t(x) - \int_{\R^d}|v_0|^2 - 2v_0\cdot u^n_0\diff\rho^n_0(x)\\
        &\quad= 2\int_0^t\int_{\R^d}(v-u^n)\cdot\partial_t v\diff\rho^n_s(x)\diff s + 2\int_0^t\int_{\R^d}u^n\otimes(v-u^n) : \nabla_x v\diff \rho^n_s(x)\diff s\\
        &\qquad -2\int_0^t\int_{\R^d}\nabla_x v:\diff\mu^n_s(x)\diff s + \int_0^t\int_{\R^{d\times d}}v(s, x)D^2K(x - y)(u^n(s, x) - u^n(s, y))\diff\rho^n_s(y)\diff\rho^n_s(x)\diff s.
    \end{split}
    \end{equation*}
Adding further equation \eqref{ineq:energy_inequality_for_symmetric} to the above we get
    \begin{equation}\label{eq:some_eq_weak_strong_1}
    \begin{split}
        &\int_{\R^d}|v(t, x) - u^n(t, x)|^2\diff\rho^n_t(x) - \int_{\R^d}|v_0(x) - u^n_0(x)|^2\diff\rho^n_0(x) + \mathrm{tr}(\mu^n_t(\R^d))\\
        &\qquad + \int_0^t\int_{\R^{d\times d}}(u^n(s, x) - u^n(s, y))D^2K(x - y)(u^n(s, x) - u^n(s, y))\diff\rho^n_s(y)\diff\rho^n_s(x)\diff s\\
        &\leq 2\int_0^t\int_{\R^d}(v-u^n)\cdot\partial_t v\diff\rho^n_s(x)\diff s + 2\int_0^t\int_{\R^d}u^n\otimes(v-u^n) : \nabla_x v\diff \rho^n_s(x)\diff s\\
        &\qquad -2\int_0^t\int_{\R^d}\nabla_x v:\diff\mu^n_s(x)\diff s + 2\int_0^t\int_{\R^{d\times d}}v(s, x)D^2K(x - y)(u^n(s, x) - u^n(s, y))\diff\rho^n_s(y)\diff\rho^n_s(x)\diff s.
    \end{split}
    \end{equation}
Using equation \eqref{eq:main_sys_strong}$_2$ for $v$, we may write
\begin{equation*}
    \begin{split}
        &\int_0^t\int_{\R^d}(v-u^n)\cdot\partial_t v\diff\rho^n_s(x)\diff s + \int_0^t\int_{\R^d}u^n\otimes(v-u^n) : \nabla_x v\diff \rho^n_s(x)\diff s\\
        & = -\int_0^t\int_{\R^d}(v-u^n)\cdot(v\cdot\nabla_x)v\diff\rho^n_s(x)\diff s + \int_0^t\int_{\R^d}u^n\otimes(v-u^n) : \nabla_x v\diff \rho^n_s(x)\diff s\\
        &\qquad - \int_0^t\int_{\R^{d\times d}}(v(s, x) - u^n(s, x))D^2K(x-y)(v(s, x) - v(s, y))\diff r_s(y)\diff\rho^n_s(x)\diff s\\
        & = \int_0^t\int_{\R^d}(u^n - v)\otimes(v-u^n) : \nabla_x v\diff \rho^n_s(x)\diff s\\
        &\qquad - \int_0^t\int_{\R^{d\times d}}(v(s, x) - u^n(s, x))D^2K(x-y)(v(s, x) - v(s, y))\diff r_s(y)\diff\rho^n_s(x)\diff s.
    \end{split}
\end{equation*}
Putting it into \eqref{eq:some_eq_weak_strong_1} we deduce the relative entropy inequality in the form:
\begin{equation*}
    \begin{split}
        &\int_{\R^d}|v(t, x) - u^n(t, x)|^2\diff\rho^n_t(x) - \int_{\R^d}|v_0(x) - u^n_0(x)|^2\diff\rho^n_0(x) + \mathrm{tr}(\mu^n_t(\R^d))\\
        &\qquad + \int_0^t\int_{\R^{d\times d}}(u^n(s, x) - u^n(s, y))D^2K(x - y)(u^n(s, x) - u^n(s, y))\diff\rho^n_s(y)\diff\rho^n_s(x)\diff s\\
        &\leq2\int_0^t\int_{\R^d}(u^n - v)\otimes(v-u^n) : \nabla_x v\diff \rho^n_s(x)\diff s  -2\int_0^t\int_{\R^d}\nabla_x v:\diff\mu^n_s(x)\diff s\\
        &\qquad - 2\int_0^t\int_{\R^{d\times d}}(v(s, x) - u^n(s, x))D^2K(x-y)(v(s, x) - v(s, y))\diff r_s(y)\diff\rho^n_s(x)\diff s\\
        &\qquad + 2\int_0^t\int_{\R^{d\times d}}v(s, x)D^2K(x - y)(u^n(s, x) - u^n(s, y))\diff\rho^n_s(y)\diff\rho^n_s(x)\diff s.
    \end{split}
\end{equation*}
We now need to estimate the terms on the right-hand side.
To treat the first two terms we use boundedness of $\nabla_x v$ and  Proposition \ref{prop:bound_symmetric_measure_trace}. We have
\begin{align*}
    &2\int_0^t\int_{\R^d}(u^n - v)\otimes(v-u^n) : \nabla_x v\diff \rho^n_s(x)\diff s  -2\int_0^t\int_{\R^d}\nabla_x v:\diff\mu^n_s(x)\diff s\\
    &\leq 2\|\nabla_x v\|_\infty\left(\int_0^t\int_{\R^d}|u^n - v|^2\diff\rho^n_s(x)\diff s +\int_0^t|\mu^n_s|(\R^d)\diff s\right)\\
    &\leq C\|\nabla_x v\|_\infty \left(\int_0^t\int_{\R^d}|u^n - v|^2\diff\rho^n_s(x)\diff s +\int_0^t\mathrm{tr}(\mu^n_s(\R^d))\diff s\right).
\end{align*}

To treat the last two terms we first split
\begin{equation*}
    \begin{split}
    &- 2\int_0^t\int_{\R^{d\times d}}(v(s, x) - u^n(s, x))D^2K(x-y)(v(s, x) - v(s, y))\diff r_s(y)\diff\rho^n_s(x)\diff s\\
    &+ 2\int_0^t\int_{\R^{d\times d}}v(s, x)D^2K(x - y)(u^n(s, x) - u^n(s, y))\diff\rho^n_s(y)\diff\rho^n_s(x)\diff s\\
    & \quad = - 2\int_0^t\int_{\R^{d\times d}}(v(s, x) - u^n(s, x))D^2K(x-y)(v(s, x) - v(s, y))\diff r_s(y)\diff\rho^n_s(x)\diff s\\
    &\qquad + 2\int_0^t\int_{\R^{d\times d}}(v(s, x) - u^n(s, x))D^2K(x-y)(v(s, x) - v(s, y))\diff \rho^n_s(y)\diff\rho^n_s(x)\diff s\\
    &\qquad - 2\int_0^t\int_{\R^{d\times d}}(v(s, x) - u^n(s, x))D^2K(x-y)(v(s, x) - v(s, y))\diff \rho^n_s(y)\diff\rho^n_s(x)\diff s\\
    &\qquad + 2\int_0^t\int_{\R^{d\times d}}(v(s, x) - u^n(s, x))D^2K(x - y)(u^n(s, x) - u^n(s, y))\diff\rho^n_s(y)\diff\rho^n_s(x)\diff s\\
    &\qquad + 2\int_0^t\int_{\R^{d\times d}}u^n(s, x)D^2K(x - y)(u^n(s, x) - u^n(s, y))\diff\rho^n_s(y)\diff\rho^n_s(x)\diff s\\
    &\quad = I_1 + I_2 + I_3 +I_4 + I_5.
    \end{split}
\end{equation*}
Now, by Young's inequality we can estimate
\begin{equation*}
    \begin{split}
    & I_1 + I_2\\
    &=- 2\int_0^t\int_{\R^{d\times d}}(v(s, x) - u^n(s, x))D^2K(x-y)(v(s, x) - v(s, y))\diff r_s(y)\diff\rho^n_s(x)\diff s\\
    &\qquad + 2\int_0^t\int_{\R^{d\times d}}(v(s, x) - u^n(s, x))D^2K(x-y)(v(s, x) - v(s, y))\diff \rho^n_s(y)\diff\rho^n_s(x)\diff s\\
    &\leq 4\|D^2K\|_{W^{1,\infty}}\|v\|_{W^{1,\infty}}\int_0^t\int_{\R^d}|u^n - v|d_f(\rho^n_s, r_s)\diff\rho^n_s(x)\diff s\\
    &\leq C\|D^2K\|_{W^{1,\infty}}\|v\|_{W^{1,\infty}}\left(\int_0^t\int_{\R^d}|u^n - v|^2\diff\rho^n_s(x)\diff s + \int_0^t W_2^2(\rho^n_s, r_s)\diff s\right),
    \end{split}
\end{equation*}
where we used $d_f(\rho^n_s, r_s)
\leq W_1(\rho^n_s, r_s)\leq W_2(\rho^n_s, r_s)
$, see Theorem 5.9 in \cite{Santambrogio}.
Moving forward, using the fact that $\diff\rho^n_s$ is a probability measure, and the H\"{o}lder inequality 
\begin{equation*}
    \begin{split}
    &I_3 + I_4\\
    & = - 2\int_0^t\int_{\R^{d\times d}}(v(s, x) - u^n(s, x))D^2K(x-y)(v(s, x) - v(s, y))\diff \rho^n_s(y)\diff\rho^n_s(x)\diff s\\
    &\qquad + 2\int_0^t\int_{\R^{d\times d}}(v(s, x) - u^n(s, x))D^2K(x - y)(u^n(s, x) - u^n(s, y))\diff\rho^n_s(y)\diff\rho^n_s(x)\diff s\\
    & = 2\int_0^t\int_{\R^{d\times d}}(v(s, x) - u^n(s, x))D^2K(x-y)((u^n(s, x) - v(s, x)) - (u^n(s, y) - v(s, y)))\diff \rho^n_s(y)\diff\rho^n_s(x)\diff s\\
    & \leq 2\|D^2K\|_\infty\int_0^t\int_{\R^d}|u^n - v|^2\diff\rho^n_s(x)\diff s + 2\|D^2K\|_\infty\int_0^t\left(\int_{\R^d}|u^n - v|\diff\rho^n_s(x)\right)^2\diff s\\
    &\leq 4\|D^2K\|_\infty\int_0^t\int_{\R^d}|u^n - v|^2\diff\rho^n_s(x)\diff s.
    \end{split}
\end{equation*}
At last, the fact that $D^2K$ is an even implies
$$
I_5 = \int_0^t\int_{\R^{d\times d}}(u^n(s, x) - u^n(s, y))D^2K(x - y)(u^n(s, x) - u^n(s, y))\diff\rho^n_s(y)\diff\rho^n_s(x)\diff s,
$$
which cancels out with the same term on the left-hand side of 
\eqref{eq:some_eq_weak_strong_1}. 
Combining all the obtained bounds we conclude
\begin{equation*}
\begin{split}
    &\int_{\R^d}|v(t, x) - u^n(t, x)|^2\diff\rho^n_t(x) - \int_{\R^d}|v_0(x) - u^n_0(x)|^2\diff\rho^n_0(x)\\
    &\qquad\leq C\|D^2K\|_{W^{1,\infty}}\|v\|_{W^{1,\infty}}\left(\int_0^t\int_{\R^d}|u^n - v|^2\diff\rho^n_s(x)\diff s + \int_0^t W_2^2(\rho^n_s, r_s)\diff s + \int_0^t\mathrm{tr}(\mu^n_s(\R^d))\diff s\right),
\end{split}
\end{equation*}
which added to the inequality from Proposition \ref{prop:Kang_Figali_ineq} gives
\begin{equation*}
\begin{split}
    &\int_{\R^d}|v(t, x) - u^n(t, x)|^2\diff\rho^n_t(x) + \mathrm{tr}(\mu^n_t(\R^d)) + W_2^2(\rho^n_t, r_t)\\
    &\qquad\leq C(T, \|D^2K\|_{W^{1,\infty}},\|v\|_{W^{1,\infty}})\Bigg(\int_0^t\int_{\R^d}|u^n - v|^2\diff\rho^n_s(x)\diff s + \int_0^t W_2^2(\rho^n_s, r_s)\diff s + \int_0^t\mathrm{tr}(\mu^n_s(\R^d))\diff s \\
    &\qquad\qquad + \int_{\R^d}|v_0(x) - u^n_0(x)|^2\diff\rho^n_0(x) + \|\rho^n_0 - r_0\|_{TV} + \||x|^2(\rho^n_0 - r_0)\|_{TV}\Bigg).
\end{split}
\end{equation*}
Using Gr\"{o}nwall's inequality we obtain the thesis.
\end{proof}

\appendix

\section{Existence of approximate solutions}\label{appendix:existence_for_approximation}

In this section we present the proof of Theorem \ref{thm:existence_for_approximation}. As mentioned earlier, the proof is a modification of  one given in \cite{mucha2025construction}, hence we only give a brief description of the steps that are different in our case. For the convenience of the reader, we first recall the existence theorem given in \cite{mucha2025construction}, which will serve as a comparison.

\begin{prop}[Theorem 2.3, \cite{mucha2025construction}]\label{prop:maja_existene_approx}
    Consider the equation
    \begin{align*}
        \left\{\begin{array}{ll}
             \partial_t\rho + \DIV_x(\rho u) = 0,  \\
             \partial_t(\rho u) + \DIV_x(\rho u\otimes u) -\DIV_x(\rho\mathbb{D}u) + \rho\nabla_x(V\star\rho) = 0,
        \end{array}
        \right.
    \end{align*}
    for 
    $$
    V := \frac{1}{|x|^\alpha} + \frac{|x|^2}{2},\qquad \alpha\in (0, 2),
    $$
    and initial data
    $$
    \rho(0, x) := \rho_0(x),\quad m(0, x) = (\rho u)(0, x) := m_0(x), \text{ such that }\rho_0\geq 0,\, \sqrt{\rho_0}\in W^{1,2}(\R^d),
    $$
    satisfying
    \begin{align*}
        \int_{\R^d}\rho_0 F(|u_0|)\diff x + \int_{\R^d\times\R^d}F(|x - y|)\rho_0(x)\rho_0(y)\diff x\diff y < +\infty,
    \end{align*}
    for
    $$
    F(z) := \frac{1 + z^2}{2}\ln(1 + z^2).
    $$
    Then, there exists a solution to the problem in the sense that
    \begin{align*}
        \rho\in L^\infty(0, T; L^1(\R^d)),&\quad \sqrt{\rho}\in L^\infty(0, T: W^{1,2}(\R^d)),\\
        \sqrt{\rho}u\in L^\infty(0, T; L^2(\R^d)),&\quad \overline{\rho \mathbb{D}u}\in L^2(0, T; W^{-1, 1}(\R^d)),\\
        \int_0^T\int_{\R^d\times\R^d}|x-y|^2&\rho(x)\rho(y)\diff x\diff y < +\infty,
    \end{align*}
    and for any $\varphi\in C^\infty_0([0, T)\times \R^d; \R)$, and $\psi\in C^\infty_0([0, T)\times \R^d; \R^d)$ we have
    \begin{align*}
        -\int_0^T\int_{\R^d}\rho\partial_t\varphi\diff x\diff t - \int_0^T\int_{\R^d}\rho u\cdot\nabla_x\varphi\diff x\diff t  = \int_{\R^d}\rho_0(x)\varphi(0, x)\diff x,
    \end{align*}
    and
    \begin{multline*}
        -\int_{\R^d}m_0(x)\cdot\psi(0, x)\diff x - \int_0^T\int_{\R^d}\sqrt{\rho}\sqrt{\rho}u\cdot\partial_t\psi\diff x\diff t - \int_0^T\int_{\R^d}(\sqrt{\rho}u\otimes \sqrt{\rho}u):\nabla_x\psi\diff x\diff t\\
        +\langle\overline{\rho \mathbb{D}u}, \nabla_x\psi\rangle + \int_0^T\int_{\R^d}\rho\nabla_x(V\star\rho)\cdot\psi\diff x\diff t = 0,
    \end{multline*}
    where one defines
    $$
    \langle\overline{\rho \mathbb{D}u}, \nabla_x\psi\rangle = -\int_0^T\int_{\R^d}\rho u\cdot(\Delta\psi + \nabla_x\DIV_x\psi) + 2(\sqrt{\rho}\otimes \sqrt{\rho}u) : \nabla_x\psi \diff x\diff t.
    $$
    Moreover, for any $t\geq 0$
    $$
    \int_{\R^d}\rho(t, x)\diff x = \int_{\R^d}\rho_0(x),
    $$
    and there exist functions $\overline{\overline{\sqrt{\rho}\mathbb{D}u}},\, \overline{\overline{\sqrt{\rho}(\nabla_x u - \nabla_x^Tu)}}\in L^2(0, T; L^2(\R^d))$, such that
    $$
    \sqrt{\rho}\overline{\overline{\sqrt{\rho}\mathbb{D}u}} = \overline{\rho\mathbb{D}u},\quad  \sqrt{\rho}\overline{\overline{\sqrt{\rho}(\nabla_x u - \nabla_x^Tu)}} = \overline{\rho(\nabla_x u - \nabla_x^Tu)},\quad \text{ a.e.,}
    $$
    where $\overline{\rho(\nabla_x u - \nabla_x^Tu)}$ is defined similarly to $\overline{\rho\mathbb{D}u}$, which satisify the following energy estimates
    \begin{enumerate}
        \item the energy estimate
        \begin{multline}\label{ineq:maja_basic_energy}
            \sup_{t\geq 0}\frac{1}{2}\int_{\R^d}\rho|u|^2\diff x + \int_0^\infty\int_{\R^d}\left|\overline{\overline{\sqrt{\rho}\mathbb{D}u}}\right|^2\diff x\diff t + \sup_{t\geq 0}\int_{\R^d\times\R^d}V(x-y)\rho(t, x)\rho(t, y)\diff x\diff y\\
            \leq \frac{1}{2}\int_{\R^d}\rho_0|u_0|^2\diff x + \int_{\R^d\times\R^d}V(x-y)\rho_0(x)\rho_0(y)\diff x\diff y.
        \end{multline}
        \item the Bresch--Dejardins estimate
        \begin{align}\label{ineq:maja_bresch_dejardins}
            \sup_{t\in [0, T]}\int_{\R^d}|\nabla_x\sqrt{\rho}|^2\diff x + \int_0^T\int_{\R^d}\left|\overline{\overline{\sqrt{\rho}(\nabla_x u - \nabla_x^Tu)}}\right|^2\diff x\diff t \leq C(T).
        \end{align}
        \item the Mellet--Vasseur estimate
        \begin{align}\label{ineq:maja_mellet_vasseur}
            \sup_{t\in [0, T]}\int_{\R^d}\rho F(|u|)\diff x + \sup_{t\in [0, T]}\int_{\R^d\times \R^d}F(|x - y|)\rho(t, x)\rho(t, y)\diff x\diff y \leq C(T).
        \end{align}
    \end{enumerate}
\end{prop}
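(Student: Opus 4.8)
The argument is the one of \cite{mucha2025construction}, which is in turn built on the degenerate compressible Navier--Stokes theory of Vasseur--Yu \cite{VaYu2016}, Mellet--Vasseur \cite{mellet2007onthebarotropic} and Bresch--Desjardins \cite{bresch2003existence}, here in the \emph{pressureless} setting with a nonlocal force playing the structural role of a pressure. The backbone is the triple of a priori bounds stated in \eqref{ineq:maja_basic_energy}--\eqref{ineq:maja_mellet_vasseur}: the energy estimate, which now carries the interaction energy $\int\int V(x-y)\rho(x)\rho(y)$ thanks to the symmetry $V(x)=V(-x)$ and the continuity equation (so that $\int\rho u\cdot\nabla_x(V\star\rho)\,\diff x=\tfrac12\tfrac{\diff}{\diff t}\int\int V(x-y)\rho\rho$); the Bresch--Desjardins entropy estimate, available because the viscosity is proportional to $\rho$, which yields $\sqrt{\rho}\in L^\infty_tW^{1,2}_x$; and the Mellet--Vasseur estimate for $\rho F(|u|)$, which in the absence of a pressure term is what ultimately gives the compactness needed to identify the limit of the convective term.

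First I would introduce a fully regularized system: add to the momentum balance the standard stabilizers --- a cold/artificial pressure $\eps\nabla_x\rho^{-k}+\delta\nabla_x\rho^{\beta}$ keeping $\rho$ positive and integrable, linear and superlinear drag $r_0u+r_1\rho|u|^{q}u$, and a Korteweg capillarity term $\kappa\rho\nabla_x\bigl(\Delta\sqrt{\rho}/\sqrt{\rho}\bigr)$ (this is what makes the BD structure usable at the approximate level) --- together with a parabolic regularization $\eps\Delta_x\rho$ in the continuity equation, and, at the innermost layer, a mollification of the singular kernel $\nabla_x V$. For this system one produces solutions by a Faedo--Galerkin scheme as in \cite{bresch2003existence,mellet2007onthebarotropic}, and then sends the parameters $\eps,\delta,\kappa,r_0,r_1$ to zero successively, carrying the three estimates through each layer. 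The confining part $|x|^2/2$ of $V$ enters twice here: being convex it gives a coercive, sign-definite contribution to the interaction energy, and it produces a uniform second-moment bound $\sup_{t\le T}\int|x|^2\rho\,\diff x\le C(T)$, hence tightness of $\{\rho(t)\}$ uniformly in $x$.

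The two delicate points are the survival of the BD and Mellet--Vasseur estimates in the presence of the nonlocal force. For BD, testing (essentially) by $\nabla_x\log\rho$, the nonlocal contribution is $\int D^2V\star\rho:\nabla_x\sqrt{\rho}\otimes\nabla_x\sqrt{\rho}$-type plus lower order; the confining part is absorbed by the left-hand side, while the weakly singular part $|x|^{-\alpha}$, $\alpha\in(0,2)$, is controlled using $\sqrt{\rho}\in W^{1,2}_x$ (hence $\rho\in L^3_x$ in $d=3$ by Sobolev embedding) together with the weighted quantity $\int\int F(|x-y|)\rho(x)\rho(y)$ from \eqref{ineq:maja_mellet_vasseur}. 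For Mellet--Vasseur one tests by $\bigl(1+\ln(1+|u|^2)\bigr)u$; since the nonlocal force is \emph{not} bounded (it grows linearly from the confining part and is singular from the $|x|^{-\alpha}$ part), closing the estimate against the superquadratic weight $F(|u|)$ forces one to propagate in time the bound on $\int\int F(|x-y|)\rho(\cdot,t)\rho(\cdot,t)$ --- which is precisely why the hypothesis $\int\int F(|x-y|)\rho_0\rho_0<\infty$ appears, and why \eqref{ineq:maja_mellet_vasseur} is stated with that extra term.

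In the limit passage, the energy and BD bounds give the weak-$*$ limits of $\rho$, $\sqrt{\rho}$, $\sqrt{\rho}u$, $\nabla_x\sqrt{\rho}$; the Mellet--Vasseur estimate upgrades this to strong $L^2_{loc}$ convergence of $\sqrt{\rho}u$ (equi-integrability of $\rho|u|^2$ at infinity and near vacuum, plus compactness of $\rho u$ in a negative-order space read off from the momentum equation), which lets one pass to the limit in $\rho u\otimes u$. The viscous term is only recovered as a distribution of order one --- hence the stated $\overline{\rho\mathbb{D}u}\in L^2_tW^{-1,1}_x$ and the explicit definition of $\langle\overline{\rho\mathbb{D}u},\nabla_x\psi\rangle$ through $\rho u$ and $\sqrt{\rho}\otimes\sqrt{\rho}u$; the almost-everywhere factorization $\sqrt{\rho}\,\overline{\overline{\sqrt{\rho}\mathbb{D}u}}=\overline{\rho\mathbb{D}u}$ follows from the renormalization/defect analysis of \cite{VaYu2016,mucha2025construction}. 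I expect the genuine obstacle to be the interaction of the degenerate viscosity with the \emph{singular} nonlocal force: one must verify that $\int\int F(|x-y|)\rho(x)\rho(y)$ stays uniformly bounded through every approximation layer (in particular as the kernel is de-mollified and the artificial pressure vanishes), and that this bound delivers exactly the tightness at infinity together with the equi-integrability near the diagonal that is needed to pass to the limit in $\int\rho\nabla_x(V\star\rho)\cdot\psi$ against the non-integrable-gradient kernel --- this is where the precise structure $V=|x|^{-\alpha}+|x|^2/2$ with $\alpha\in(0,2)$ is indispensable.
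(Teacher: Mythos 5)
This statement is not proved in the paper at all: it is Theorem~2.3 of \cite{mucha2025construction}, recalled verbatim as a benchmark, and the paper's own work (Appendix~\ref{appendix:existence_for_approximation}) consists only of the modifications needed to replace the force $\rho\nabla_x(V\star\rho)$ by the alignment term of Theorem~\ref{thm:existence_for_approximation}. Measured against the construction that the appendix attributes to \cite{mucha2025construction}, your outline follows essentially the same route (multi-layer regularization, Faedo--Galerkin, then successive removal of the parameters while propagating the energy, Bresch--Desjardins and Mellet--Vasseur bounds, with the symmetry of $V$ producing the interaction energy and the confining part giving moment bounds/tightness), but two deviations are worth flagging. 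First, the actual scheme \eqref{eq:appendix_approximate_system} does not mollify the singular kernel: it truncates $V$ to a torus of size $L$ and recovers the whole space by expanding the torus at the end, which is also how the second-moment bound is justified. Second, besides the cold pressure, drag and Korteweg terms you list, the scheme needs the additional regularizers $-\nu\Delta_x^2u$, $\delta\rho\nabla_x\Delta_x^3\rho$ and $-\eps\nabla_x\rho\cdot\nabla_xu$; these are precisely what makes the Bresch--Desjardins multiplier $\nabla_x\log\rho$ admissible, a point your sketch takes for granted. Also, your description of the nonlocal contribution in the BD step is slightly off: testing by $\nabla_x\log\rho$ produces $\int\nabla_x\rho\cdot(\nabla_xV\star\rho)\diff x=-\int\rho\,(\Delta_xV\star\rho)\diff x$ rather than a $D^2V$ term paired with $\nabla_x\sqrt{\rho}\otimes\nabla_x\sqrt{\rho}$. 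Beyond these points your account of the three a priori estimates, of why the hypothesis on $\int\int F(|x-y|)\rho_0(x)\rho_0(y)\diff x\diff y$ is needed, and of the weak recovery of the viscous flux $\overline{\rho\mathbb{D}u}$ agrees with the cited proof; but it remains a strategy outline, since the delicate estimates for the singular part $|x|^{-\alpha}$ in the BD and Mellet--Vasseur steps are asserted rather than carried out, and those are exactly the content of \cite{mucha2025construction}.
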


As one can see, the difference between Theorem \ref{thm:existence_for_approximation} and Proposition \ref{prop:maja_existene_approx} is the form of the non-local term and its consequences on the energy estimates. The approximating system for the proof of the Proposition \ref{prop:maja_existene_approx} is given by
\begin{align*}
    \left\{\begin{array}{ll}
        \partial_t\rho + \DIV_x(\rho u) = \varepsilon\Delta_x\rho,   \\
        \partial_t(\rho u) + \DIV_x(\rho u\otimes u) - \DIV_x(\rho \mathbb{D}u) + \rho\nabla_x(V_L\star\rho)\\
        \qquad= -r_0 u - r_1\rho|u|^2u + \kappa\rho\nabla_x\left(\frac{\Delta_x\sqrt{\rho}}{\sqrt{\rho}}\right) - \varepsilon\nabla_x\rho\cdot\nabla_x u - \nu\Delta^2_xu + \eta\nabla_x\rho^{-6} +\delta\rho\nabla_x\Delta^3_x\rho,
    \end{array}
    \right.
\end{align*}
while in our case it is
\begin{align}\label{eq:appendix_approximate_system}
    \left\{\begin{array}{ll}
        \partial_t\rho + \DIV_x(\rho u) = \varepsilon\Delta_x\rho,   \\
        \partial_t(\rho u) + \DIV_x(\rho u\otimes u) - \DIV_x(\rho \mathbb{D}u) +\rho\int_{\R^d}D^2K_L(x-y) (u(x) - u(y))\rho(y)\diff y\\
        \qquad= -r_0 u - r_1\rho|u|^2u + \kappa\rho\nabla_x\left(\frac{\Delta_x\sqrt{\rho}}{\sqrt{\rho}}\right) - \varepsilon\nabla_x\rho\cdot\nabla_x u - \nu\Delta^2_xu + \eta\nabla_x\rho^{-6} +\delta\rho\nabla_x\Delta^3_x\rho,
    \end{array}
    \right.
\end{align}
where $V_L$ and $D^2K_L$ are functions $V, D^2K$, respectively, truncated to a periodic torus of size $L$ respectively. To get a better understanding of this technique, we briefly explain the need for all the parameters in the approximation (cf \cite[Table 1]{mucha2025construction})
\begin{itemize}
    \item The terms with $\varepsilon > 0$, are needed to perform standard construction of the strong solution at the level of Galerkin approximation.
    \item The terms with $\nu, \eta, \delta > 0$ give us enough density regularity to test the equation by $\nabla_x\log\rho$ and obtain the Bresch--Dejardins estimates .
    \item The terms with $\kappa, r_0, r_1 > 0$ gives us enough density and velocity regularity to renormalize the momentum equation.
    \item $L > 0$ allows us to consider a simpler case of bounded, periodic domain, and later expand the torus.
\end{itemize}

Let us now discuss the differences in the main steps of the approximation procedure in comparison to \cite{mucha2025construction}.
\begin{enumerate}
    \item The first difference in our case is in the stability of the non-local term. Not to repeat ourselves we refer to the discussion around \eqref{thightness_of_product_sequence}, where one can see that it is enough to have
    $$
    \rho_n\to \rho\quad\text{ strongly in }L^1_{t,x},\qquad \rho_n u_n\to \rho u\quad\text{ strongly in }L^1_{t,x},
    $$
    at any approximation step to properly converge in the non-local term. It is clearly given, when looking at the arguments in \cite{mucha2025construction}.
    
    \item The second difference is in the derivation of the basic energy inequality, which is used during he Galerkin approximation, and to obtain fundamental estimates (cf. \cite[Subsection 3.2, 3.3]{mucha2025construction}). As always, it is done by multiplying the momentum equation by the velocity, thus our goal is to show the estimate for
    \begin{align*}
        \left|\int_0^T\int_{\R^d\times\R^d}\rho(t, x)u(t, x)D^2K(x-y)(u(t, x) - u(t, y))\rho(t, y)\diff y\diff x\diff t\right|.
    \end{align*}
    Whenever $D^2K\in C_b(\R^d)$ only, then we may use mass conservation, and H\"{o}lder's inequality to get
    \begin{equation*}
        \begin{split}
            &\left|\int_0^T\int_{\R^d\times\R^d}\rho(t, x)u(t, x)D^2K(x-y)(u(t, x) - u(t, y))\rho(t, y)\diff y\diff x\diff t\right|\\
            &\leq \|D^2K\|_\infty \int_0^T\int_{\R^d}\rho|u|^2\diff x\diff t + \|D^2K\|_\infty\int_0^T\left(\int_{\R^d}\rho|u|\diff x\right)^2\diff t\\
            &\leq 2\|D^2K\|_\infty \int_0^T\int_{\R^d}\rho|u|^2\diff x\diff t,
        \end{split}
    \end{equation*}
    which allows us to use Gr\"{o}nwall's inequality (compare with the left-hand side of \eqref{ineq:maja_basic_energy}) and leads to an exponent term in \eqref{ineq:final_energy_inequality}. For a symmetric
    $D^2K$, by a change of variables, we may see that
    \begin{multline*}
        \int_0^T\int_{\R^d\times\R^d}\rho(t, x)u(t, x)D^2K(x-y)(u(t, x) - u(t, y))\rho(t, y)\diff y\diff x\diff t \\
        = \int_0^T\int_{\R^d\times\R^d}\rho(t, y)u(t, y)D^2K(x-y)(u(t, y) - u(t, x))\rho(t, x)\diff y\diff x\diff t,
    \end{multline*}
    thus, whenever $D^2K$ is positive definite
    \begin{multline*}
        \int_0^T\int_{\R^d\times\R^d}\rho(t, x)u(t, x)D^2K(x-y)(u(t, x) - u(t, y))\rho(t, y)\diff y\diff x\diff t\\
        = \frac{1}{2}\int_0^T\int_{\R^d\times\R^d}\rho(t, x)\rho(t, y)(u(t, x) - u(t, y))D^2K(x-y)(u(t, x) - u(t, y))\diff y\diff x\diff t \geq 0,
    \end{multline*}
    which leads to the energy term without the use of Gr\"{o}nwall's, and without the exponent term in \eqref{ineq:energy_inequality_for_symmetric}.  We note that this is enough to let the dimension of the Galerkin approximation go to $+\infty$.
    \item The third difference is in derivation of the Bresch--Dejardins estimate (cf. \cite[Section 4, Appendix A]{mucha2025construction}). This estimate is used to converge with $\varepsilon, \nu, \delta, \eta \to 0^+$ in \eqref{eq:appendix_approximate_system}. In fact, this is a crucial step to obtain an augmented regularity for the density, when the pressure is not present in the equation. Here, one can obtain a bound
    $$
    \sqrt{\rho}\in L^\infty(0, T; W^{1,2}(\R^d)).
    $$
    Since it is obtained via testing the momentum equation by $\nabla_x\log\rho$, the additional estimate in our case is for the term
    \begin{equation*}
        \begin{split}
            \left|\int_0^T\int_{\R^d\times\R^d}\rho(t, x)\nabla_x(\log\rho(t, x))D^2K(x-y)(u(t, x) - u(t, y))\rho(t, y)\diff y\diff x\diff t\right|.
        \end{split}
    \end{equation*}
    Similarly as in point 2, we may use Young's inequality and mass conservation to obtain
    \begin{equation*}
        \begin{split}
            &\left|\int_0^T\int_{\R^d\times\R^d}\rho(t, x)\nabla_x(\log\rho(t, x))D^2K(x-y)(u(t, x) - u(t, y))\rho(t, y)\diff y\diff x\diff t\right|\\
            & \phantom{=}= 2\left|\int_0^T\int_{\R^d\times\R^d}\sqrt{\rho(t, x)}\nabla_x\sqrt{\rho(t, x)}D^2K(x-y)(u(t, x) - u(t, y))\rho(t, y)\diff y\diff x\diff t\right|\\
            &\leq \|D^2K\|_\infty\int_0^T\int_{\R^d}|\nabla_x\sqrt{\rho}|^2\diff x\diff t + \|D^2K\|_\infty\int_0^T\int_{\R^d}\rho|u|^2\diff x\diff t\\
            &\phantom{=}+\|D^2K\|_\infty\left(T +\int_0^T\int_{\R^d}|\nabla_x\sqrt{\rho}|^2\diff x\diff t\right)\left(T + \int_0^T\int_{\R^d}\rho|u|^2\diff x\diff t\right).
        \end{split}
    \end{equation*}
    Comparing with the left-hand side of \eqref{ineq:maja_bresch_dejardins}, the inequality above, together with the bound on $\|\rho|u|^2\|_{L^\infty_tL^1_x}$, allows us to use Gr\"{o}nwall's inequality and deduce the Bresch--Dejardins estimate. At this point in the approximation, we get the convergence of the approximating sequence in $C([0, T]; L^{3/2})$ strongly for the density, in $L^2(0, T; H^1)$ strongly and in $L^2(0, T; H^2)$ weakly for the square root of the density, in $L^2(0, T; L^{3/2})$ strongly for the momentum, and in $L^2(0, T; L^2)$ weakly for the velocity (cf. \cite[Lemma 4.1, 4.3]{mucha2025construction}).
    
    \item The fourth difference  is in obtaining the so-called Mellet--Vasseur estimate (cf. \cite[Section 5, Lemma 5.1]{mucha2025construction}). The idea inside \cite{mellet2007onthebarotropic} is quite similar to the classical proof of the existence of weak solutions to compressible Navier--Stokes by Lions \cite{lions1998mathematical2}, where the augmented density regularity is used to improve the possible estimates on the velocity field. After obtaining this estimate, one can converge with $\kappa, r_0, r_1\to 0^+$ in \eqref{eq:appendix_approximate_system}. We again focus only on the non-local term, but we need some technical definitions either way. Let us define cut-off functions for density
    $$
    \phi^0_m(\rho) = 1,\text{ for } \rho > \frac{1}{m},\quad \phi^0_m(\rho) = 0,\text{ for }\rho < \frac{1}{2m}, \quad |(\phi^0_m)'| < 2m,
    $$
    and
    $$
    \phi^\infty_k(\rho) = 1,\text{ for }\rho < k,\quad \phi^\infty_k(\rho) = 0,\text{ for }\rho >2k,\quad |(\phi^\infty_k)'| <\frac{2}{k}.
    $$
    Then, we can set
    $$
    v_{m,k} = \phi_{m,k}(\rho)u,\qquad \phi_{m,k}(\rho) = \phi^0_m(\rho)\phi_k^\infty(\rho),
    $$
    although we shall skip the subscripts $m, k$ in the following explanation. Moreover, we fix the following approximations of the function $F(z) = \frac{1 + z^2}{2}\ln(1 + z^2)$
    \begin{align*}
        F_n(z) = \left\{\begin{array}{ll}
            \frac{1 + z^2}{2}\ln(1 + z^2),\quad z \leq n \\
            \left(nz + \frac{1 - n^2}{2}\right)\ln(1 + z^2),\quad z > n
        \end{array}\right.
    \end{align*}
    and for its derivative
    \begin{align*}
        \psi_n(z) = \frac{1}{z}F'_n(z) = \left\{\begin{array}{ll}
            1 + \ln(1 + z^2),\quad z \leq n \\
            \frac{n}{z}\ln(1 + z^2) + \frac{2nz + 1 - n^2}{1 + z^2},\quad z > n.
        \end{array}\right.
    \end{align*}
    Functions above are subject to the bounds 
    $$
    F_n(z) \leq C_n|z|^{1 +\delta,}\qquad \psi_n(z)z \leq C_n|z|^\delta,
    $$
    for some $C_n >0$ and $\delta\in (0, 1)$, and
    \begin{align}\label{bounds:bound_on_F_n_independent}
    F_n(z) \leq C + C|z|^{2 + \delta},\qquad \psi_n(z)z\leq C + C|z|^{1+\delta},
    \end{align}
    for some $C>0$. With those definitions at hand, we define a test function $\Phi = \xi(t)\psi_n(|v|)v \phi(\rho)$, for fixed non-negative $\xi\in C_c^\infty(0, T)$, and test the momentum equation with it. Then, we aim to estimate
    \begin{align*}
        \left|\int_0^T\int_{\R^d\times\R^d}\rho(t, x)\Phi(t, x)D^2K(x-y)(u(t, x) - u(t, y))\rho(t, y)\diff y\diff x\diff t\right|.
    \end{align*}
    To do so, we note that $F_n$ satisfies (cf. \cite[Proposition 5.7]{mucha2025construction})
    $$
    zF'_n(z) \lesssim F_n(z),\qquad F_n^*(F_n'(z)) \lesssim F_n(z),
    $$
    hence, with the use of mass conservation, and the Fenchel--Young inequality
    \begin{equation}\label{ineq:mellet_vasseur_ineq_for_bound_1}
    \begin{split}
        &\left|\int_0^T\int_{\R^d\times\R^d}\rho(t, x)\Phi(t, x)D^2K(x-y)(u(t, x) - u(t, y))\rho(t, y)\diff y\diff x\diff t\right|\\
        &\leq \|D^2K\|_\infty\int_0^T\int_{\R^d}\xi(t)F_n'(|v|)\phi(\rho)\rho(t,x)\int_{\R^d}\rho(t, y)|u(t,y)|\diff y\diff x\diff t\\
        &\phantom{=} + \|D^2K\|_\infty\int_0^T\int_{\R^d}\xi(t)F_n'(|v|)\phi(\rho)\rho|u|\diff x\diff t\\
        &\leq \|D^2K\|_\infty\int_0^T\int_{\R^d}\xi(t)\rho(t,x)\left(F_n^*(F_n'(|v|)) + F_n\left(\int_{\R^d}\rho(t, y) u(t, y)\diff y\right)\right)\diff x\diff t\\
        &\phantom{=}+\|D^2K\|_\infty\int_0^T\int_{\R^d}\xi(t)|v|F_n'(|v|)\rho(t, x)\diff x \diff t\\
        &\leq \|D^2K\|_\infty\int_0^T\int_{\R^d}\xi(t)\rho(t,x)\left(F_n^*(F_n'(|v|)) + F_n\left(\int_{\R^d}\rho(t, y) u(t, y)\diff y\right)\right)\diff x\diff t\\
        &\phantom{=}+C\|D^2K\|_\infty\int_0^T\int_{\R^d}\xi(t)F_n(|v|)\rho(t, x)\diff x \diff t.
    \end{split}
    \end{equation}
    Using the bound \eqref{bounds:bound_on_F_n_independent}, and the already shown energy bounds, we estimate
    \begin{equation}\label{ineq:mellet_vasseur_ineq_for_bound_2}
        \begin{split}
            &\int_0^T\int_{\R^d}\xi(t)\rho(t,x)\left(F_n^*(F_n'(|v|)) + F_n\left(\int_{\R^d}\rho(t, y) u(t, y)\diff y\right)\right)\diff x\diff t\\
            &\leq \int_0^T\int_{\R^d}\xi(t)\rho(t, x)(F_n(|v|) + C + C\|\rho u\|_{L^\infty_tL^1_x}^{2+\delta})\diff x\diff t\\
            &\leq \int_0^T\int_{\R^d}\xi(t)\rho(t, x)(F_n(|v|) + C + C\|\rho |u|^2\|_{L^\infty_tL^1_x}^{\frac{2+\delta}{2}})\diff x\diff t.
        \end{split}
    \end{equation}
    Combining \eqref{ineq:mellet_vasseur_ineq_for_bound_2} with \eqref{ineq:mellet_vasseur_ineq_for_bound_1} we are set to use the weak version of Gr\"{o}nwall's lemma, which we recall below for the convenience of the reader.
    \begin{prop}[Lemma B.1, \cite{mucha2025construction}]
        Let $f\in L^1(0, T)$ satisfy
        $$
        -\int_0^T\xi'(t)f(t)\diff t\leq\int_0^T\xi(t)(a f(t) + b(t))\diff t,
        $$
        for any $\xi\in C_c^\infty(0, T)$, $\xi\geq 0$, a constant $a\geq 0$, and a non-negative function $b\in L^1(0, T)$. Then, for almost all $0\leq s < t < T$ we have
        $$
        f(t) \leq f(s) e^{a(t-s)} + \int_s^t e^{a(t-\tau)}b(\tau)\diff\tau.
        $$
    \end{prop}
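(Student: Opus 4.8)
The plan is to read the hypothesis as the distributional differential inequality $f' \le a f + b$ on $(0,T)$ and to remove the linear term by an integrating factor, thereby reducing the statement to the elementary fact that an $L^1$ function whose distributional derivative is a non-positive measure coincides a.e.\ with a non-increasing function. The Grönwall-type bound is then nothing more than the statement that this monotone representative decreases.

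Concretely, I would first set $g(t) := e^{-at} f(t)$, which lies in $L^1(0,T)$ since $e^{-at}$ is bounded on $(0,T)$, and $B(t) := \int_0^t e^{-a\tau} b(\tau)\,\diff\tau$, which is absolutely continuous on $[0,T]$ with $\langle B',\xi\rangle = \int_0^T e^{-a\tau} b(\tau)\xi(\tau)\,\diff\tau$. Given a test function $\xi \in C_c^\infty(0,T)$ with $\xi \ge 0$, the function $\eta(t) := e^{-at}\xi(t)$ is again in $C_c^\infty(0,T)$ and non-negative, and one has the pointwise identity $e^{-at}\xi'(t) = \eta'(t) + a\eta(t)$. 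Plugging $\eta$ into the hypothesis and rearranging, the $a$-terms cancel and one is left with
\begin{equation*}
-\int_0^T \xi'(t)\, g(t)\,\diff t \;\le\; \int_0^T \xi(t)\, e^{-at} b(t)\,\diff t
\end{equation*}
for every such $\xi$. This says exactly that $h := g - B \in L^1(0,T)$ satisfies $h' \le 0$ in $\mathcal{D}'(0,T)$.

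Next I would invoke (and, for completeness, prove) the lemma that such an $h$ coincides almost everywhere with a non-increasing function: the distribution $h'$ is non-positive on non-negative test functions, so by the Riesz representation theorem it is a non-positive Radon measure; mollifying, $h_\delta := h \star \varphi_\delta$ is smooth on compact subintervals with $h_\delta' = h' \star \varphi_\delta \le 0$, hence non-increasing, and a subsequence of the $h_\delta$ converges to $h$ almost everywhere, so $h$ agrees a.e.\ with a non-increasing $\tilde h$. Letting $E \subset (0,T)$ be the full-measure set on which $h = \tilde h$, for $s,t \in E$ with $s<t$ we get $\tilde h(t) \le \tilde h(s)$, i.e.\ $e^{-at}f(t) - B(t) \le e^{-as}f(s) - B(s)$; multiplying by $e^{at}$ and using $B(t)-B(s) = \int_s^t e^{-a\tau}b(\tau)\,\diff\tau$ yields exactly $f(t) \le e^{a(t-s)}f(s) + \int_s^t e^{a(t-\tau)}b(\tau)\,\diff\tau$, first for $s,t \in E$ and hence for almost every $0 \le s < t < T$.

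The routine parts are the test-function manipulation and the final rearrangement; the one point that needs genuine care is the monotone-representative lemma, i.e.\ passing from the distributional inequality $h' \le 0$ to an honest pointwise-a.e.\ monotonicity statement while correctly tracking the ``for almost all $s<t$'' quantifier. This is a standard but slightly delicate argument (identification of $h'$ with a measure, stability of monotonicity under mollification and under a.e.\ limits), and it is the only place where something beyond bookkeeping is required.
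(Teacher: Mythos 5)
The paper does not prove this proposition at all: it is quoted verbatim as Lemma B.1 of \cite{mucha2025construction} and used as a black box, so there is no internal proof to compare against. Your argument is a correct and complete (modulo the standard monotone-representative lemma, which you correctly identify as the only non-bookkeeping step) proof of this weak Gr\"onwall inequality: the substitution $\eta = e^{-at}\xi$ is legitimate since $\eta$ is again a non-negative test function, the cancellation of the $a$-terms is exact, and the reduction to ``$h'\le 0$ in $\mathcal{D}'(0,T)$ implies $h$ agrees a.e.\ with a non-increasing function'' via mollification is the standard route; the final quantifier ``for almost all $0\le s<t<T$'' is correctly accounted for by restricting $s,t$ to the full-measure set $E$.
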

    Thus, we get the needed estimate on 
    $$
    \int_{\R^d}\rho(t, x)F(|u|)\diff x.
    $$
    Note, that in our case this technique does not provide us with the estimate on
    $$
        \sup_{t\in [0, T]}\int_{\R^d\times \R^d}F(|x - y|)\rho(t, x)\rho(t, y)\diff x\diff y,
    $$
    which is present in \eqref{ineq:maja_mellet_vasseur}. At the end of this approximation step, we obtain the convergence in $C([0, T]; L^{3/2})$ strongly for the density, in $L^2(0, T; L^p)$, $(p < 3/2)$ strongly for the momentum, and in $L^2(0, T; L^2)$ strongly for the square root of the density multiplied by the velocity (cf. \cite[Lemma 6.1]{mucha2025construction}).
    
    \item The part of the proof devoted to expansion of the torus to the whole space is actually the same as in \cite{mucha2025construction}.  We only note that all of the bounds on the new nonlocal term from previous points are uniform with respect to the size of the domain. Hence, one can obtain the same convergences as in the step before when letting the size of torus go to $+\infty$.
    \item At last, we need to explain how to obtain the second moment bounds on density. We note that as approximation procedure is done by the expansion of the torus, it is possible to test the continuity equation in \eqref{eq:appendix_approximate_system} (after the convergence with $\varepsilon\to 0^+$), by $|x|^2$. Then, using Young's inequality
    \begin{align*}
        &\int|x|^2\rho(t, x)\diff x - \int|x|^2\rho_0(x)\diff x = 2\int_0^t\int\rho u\cdot x\diff x \\
        &\leq \int_0^t\int|x|^2\rho(s, x)\diff x\diff s + \int_0^t\int\rho(s, x)|u(s, x)|^2\diff x\diff s,
    \end{align*}
    which by classical Gr\"{o}nwall's inequality grants us desired bounds.
\end{enumerate}

\textbf{Acknowledgment.}
For the purpose of open access, the author has applied a Creative Commons Attribution (CC BY) licence to any Author Accepted Manuscript version arising from this submission.

\bibliographystyle{abbrv}
\bibliography{Nonlocal_AR}
\end{document}